\DeclareMathAlphabet{\mathpzc}{OT1}{pzc}{m}{it}
\newtheorem{theorem}{Theorem}
\newtheorem{corollary}[theorem]{Corollary}
\newtheorem{lemma}[theorem]{Lemma}
\newtheorem{proposition}[theorem]{Proposition}
\theoremstyle{definition}
\newtheorem{definition}[theorem]{Definition}
\theoremstyle{remark}
\newtheorem{remark}[theorem]{Remark}
\newcommand{\R}{\mathbb{R}}
\newcommand{\T}{\mathbb{T}}
\newcommand{\N}{\mathbb{N}}
\newcommand{\C}{\mathbb{C}}
\newcommand{\D}{\mathbb{D}}
\newcommand{\Z}{\mathbb{Z}}
\newcommand{\F}{\mathcal{F}}
\newcommand{\vertiii}[1]{{\left\vert\kern-0.25ex\left\vert\kern-0.25ex\left\vert #1 
    \right\vert\kern-0.25ex\right\vert\kern-0.25ex\right\vert}}
\begin{document}

	\title[]{\large Analytic Structure of Stationary Flows of an Ideal Fluid with a Stagnation Point} 
	\author{Aleksander Danielski}
		\address{Department of Mathematics and Statistics, Concordia University\\
		Montreal, QC H3G 1M8, Canada\\}
		\email{al\_danielski@hotmail.com}

		\begin{abstract}
The flow of an ideal fluid possesses a remarkable property: despite limited regularity of the velocity field, its particle trajectories are analytic curves. In our previous work, this fact was used to introduce the structure of an analytic Banach manifold in the set of 2D stationary flows having no stagnation points. The main feature of our description was to regard the stationary flow as a collection of its analytic flow lines, parameterized non-analytically by values of the stream function $\psi$.

In this work, we adapt this description to the case of 2D stationary flows which have a single elliptic stagnation point. Namely, we consider flows in a domain bounded by the graph of analytic function $\rho = b(\varphi)$, where $(\rho,\varphi)$ are polar coordinates centred at the origin. The position $p$ of the stagnation point is an unknown and must be included in the solution. In polar coordinates $(r,\theta)$ centred at $p$, the flow lines are described by graphs of $r=a(\psi,\theta)$, where $a$ is a `partially-analytic' function (analytic in $\theta$, of finite regularity in $\psi$). The equation of stationary flow $\Delta \psi = F(\psi)$ is transformed to the quasilinear elliptic equation $\Xi(a) = F(\psi)$ for the flow lines. The analysis is complicated by the fact that the ellipticity of $\Xi$ degenerates at the stagnation point.

We introduce function spaces for the partially-analytic family of flow lines, modelled on the weighted Kondratev spaces, appropriate for the degenerate setting. The equation of stationary flow is thus regarded as an analytic operator equation in complex Banach spaces, with local solution given by the implicit function theorem. In particular, we show that near the circular flow of constant vorticity, the equation has unique solution $p, a(\psi,\theta)$ depending analytically on parameters $b(\varphi)$ and $F(\psi)$.
\end{abstract}
	
	\maketitle

{\large

\section{Analytic structure of 2D stationary flows}
\pagenumbering{arabic}

An ideal incompressible fluid consists of a continuum of particles, at all times filling the domain, subject only to the pressure the particles exert on each other to enforce the fluid's incompressibility.  The flow of such a fluid, that is, the evolution of its velocity field $\mathbf{u}(x,t)$ (which should be at all times tangent to the boundary) is governed by the incompressible Euler equations
\begin{equation}\label{euler}
\pdv{\mathbf{u}}{t} + \mathbf{u} \cdot \nabla \mathbf{u} + \nabla p = 0,  \qquad \nabla \cdot \mathbf{u} = 0,
\end{equation}
for which the scalar pressure field $p(x,t)$, due to incompressibility, is uniquely (up to additive constant) defined by $\mathbf{u}$.

By introducing the vorticity $\mathbf{\omega} = \nabla \times \mathbf{u}$ and taking the curl of \ref{euler}, one obtains the Euler equations in vorticity form. For the two-dimensional fluid, the vorticity always points normal to the flow and is thus taken as a scalar. In this case, the Euler equations in vorticity form are
\begin{equation}
\pdv{\omega}{t} + \mathbf{u} \cdot \nabla \omega = 0, \qquad \omega = \nabla^{\perp} \cdot \mathbf{u} \qquad \nabla \cdot \mathbf{u}=0 .
\end{equation}
In particular, for the 2D fluid, the vorticity is transported along the particle trajectories by the flow.

Long-time existence and uniqueness of classical solutions of the initial value problem for the 2D fluid was established nearly a century ago, starting with the work of Wolibner (\cite{Wo}). These classical solutions possess the remarkable property that despite their limited regularity, say $\mathbf{u} \in H^m$ with $m > n/2 + 1$ (where $n$ is the dimension of the fluid domain), the particle trajectories $x(t)$, which satisfy $\dot{x}(t) = \mathbf{u}(x(t),t)$, are real analytic curves. This recent discovery was first proven by Serfati (\cite{Se}), and later independently by numerous authors (see \cite{Sh1}, \cite{Co}, \cite{FrZh}, \cite{InKaTo}, \cite{In}, \cite{Na}). 

The simplest class of solutions to the 2D Euler equations are the stationary (time-independent) flows. These are divergence-free vector fields $\mathbf{u}$, tangent to the boundary, along whose flow lines the vorticity is constant. The divergence-free condition ensures the existence of the stream function $\psi$ (unique up to additive constant), such that $\mathbf{u} = \nabla^{\perp} \psi$ and $\omega = \Delta \psi$. Its critical points correspond to stagnation points of the flow and its level lines correspond to the flow lines. The boundary condition of \ref{euler} implies $\psi$ should be constant along each component of the boundary. If $\psi$ is strictly monotone transversally to the flow lines, then its values uniquely parameterize the set of the flow lines. Stationary solutions of this form satisfy equation
\begin{equation}\label{stationary}
\Delta \psi = F(\psi), \qquad \psi \rvert_{\Gamma_i} = c_i,
\end{equation}
where $F(\psi)$ is the prescribed vorticity along the flow lines and $\Gamma_i$ are the components of the boundary of the domain. The solutions of the above equation possess an exotic feature: though the regularity of $\psi$ may be finite, its level lines are analytic curves (in the stationary flow, the particle trajectories and flow lines coincide). We call this property `partial-analyticity' of $\psi$.

The study of the local structure of the set of 2D stationary flows was introduced by Choffrut and \v{S}ver\'{a}k (\cite{ChSv}). They showed that under some non-degeneracy conditions on the reference solution, the set of
nearby stationary solutions form a smooth manifold. In their description, stationary flows in annular domains $\Omega$ having no stagnation points are locally parameterized by the distributions of vorticity $A_\omega(\lambda) =  \lvert \{ x \in \Omega \; : \; \omega(x) < \lambda \} \rvert$. Due to loss of derivatives in the linearization of \ref{stationary}, their approach necessitates working in the class of smooth functions and thus using the Nash-Moser-Hamilton implicit function theorem for Fr\'{e}chet spaces.

In our previous work (\cite{Da}), we introduced a different local description of the set of 2D stationary flows. In our approach, we incorporated the partially-analytic structure of the stationary solutions by regarding the flow as a collection of its analytic flow lines. We considered the flow in a periodic channel $ \{ (x,y) : x \in \T, \; f(x) < y < g(x) \}$ bounded by the graphs of analytic functions $y=f(x)$ and $y=g(x)$. If the flow has no stagnation points then the values of $\psi$ parameterize the family of flow lines, described by the graphs of function $y=a(x,\psi)$. By restricting values of $\psi$ to say $[0,1]$, we treat it as an independent variable and rewrite \ref{stationary} as an equation for the flow lines $a(x,\psi)$. In the new coordinates, which were already introduced by von Mises {\cite{Mi}) and Dubreil-Jacotin (\cite{Du}), we obtain for the velocity and vorticity the expressions
\begin{equation}
\mathbf{u}(\psi,\theta) =(\psi_y, -\psi_x) = \frac{(1,a_x)}{a_\psi},
\end{equation}
\begin{equation}
\Phi(a) = \Delta \psi  = \frac{-1}{a_\psi}a_{xx} + \frac{2a_x}{a_\psi^2}a_{x \psi} - \frac{1+a_x^2}{a_\psi^3}a_{\psi \psi},
\end{equation}
and the quasilinear elliptic boundary value problem
\begin{equation}
\Phi(a) = F(\psi), \qquad a(x,0) = f(x), \qquad a(x,1) = g(x),
\end{equation}
for the stationary flow lines $a(x,\psi)$, where $(x,\psi) \in \T \times [0,1]$. The ellipticity of $\Phi$ is non-degenerate away from stagnation points.

We quantify the analyticity of flow lines by extending $x \in \T$ to the complex periodic strip $\T_\sigma = \T \times i(-\sigma, \sigma)$. The Paley-Wiener theorem characterizes functions analytic in this strip by their complex singularities on the strip boundaries. We defined the Banach space $X_\sigma^m(\T)$ of complex analytic flow lines in $\T_\sigma$ with norm
\begin{equation*}
\lVert a(x) \rVert_{X_\sigma^m}^2 = \lVert a(\cdot + i \sigma) \rVert_{H^m(\T)}^2 + \lVert a(\cdot - i \sigma) \rVert_{H^m(\T)}^2,
\end{equation*}
and the space $Y_\sigma^m(\T \times [0,1])$ of partially-analytic families of complex flow lines with norm
\begin{equation*}
\lVert a(x,\psi) \rVert_{Y_\sigma^m}^2 = \lVert a(\cdot + i \sigma, \cdot) \rVert_{H^m(\T \times [0,1])}^2 + \lVert a(\cdot - i \sigma, \cdot) \rVert_{H^m(\T \times [0,1])}^2	.
\end{equation*}
For $m$ sufficiently high, an application of the analytic implicit function theorem in complex Banach spaces proves that the vorticities $F(\psi) \in H^{m-2}[0,1]$ and domains bounded by graphs of analytic functions $f(x), g(x) \in X_\sigma^{m-1/2}$ analytically parameterize the Banach manifold of stationary flows $a(x,\psi)$ in $Y_\sigma^m$ near the constant parallel flow $\psi = y$. The behaviour of the complex singularities of the analytic flow lines is controlled by the complex singularities of the boundary.

Both approaches discussed above consider only flows without stagnation points. In this work, we show that the general principle of treating a stationary flow as a family of analytic flow lines can be extended to produce a local description of stationary flows having a stagnation point. In particular, we consider flows on a simply connected domain close to the unit disk having a single non-degenerate elliptic stagnation point. Certain obstacles not present in the absence of stagnation points must be overcome. In polar coordinates $(r,\theta)$ centred at the stagnation point, the flow lines are expressed as graphs of partially-analytic functions $r=a(\psi,\theta)$. The position of the stagnation point is an unknown and must be incorporated into the solution. This fact yields a boundary condition which is nonlinear. The stationary flows are governed by a quasilinear elliptic boundary value problem whose ellipticity degenerates at the stagnation point. The main difficulty is in carefully constructing spaces of partially-analytic flow lines which correctly describe the non-degenerate stagnation point while satisfying the requirements of the implicit function theorem. We introduce such function classes, modelled on the Kondratev spaces, and show the relevant degenerate elliptic equations are well-posed in them. The result is an analytic parameterization of the stationary solutions near the circular flow $\psi = x^2 + y^2$ by the prescribed vorticity and domain.


\section{Flow lines coordinates about an elliptic stagnation point}

We start by introducing coordinates for the flow lines and derive the boundary value problem for the stationary flow in these coordinates. Next, we construct the appropriate function spaces for the family of flow lines around a non-degenerate fixed point, as well as spaces for the parameters of the problem, and discuss their properties.

The prototypical stationary flow having a single non-degenerate elliptic fixed point is described by the stream function $\psi = x^2 + y^2$ in say the unit disk $\D$, our logical reference solution. This flow has constant vorticity $F(\psi) = 4$ and describes the motion of a fluid rotating as a rigid body. The flow lines, which are concentric circles about the origin, are transversally parameterized by values of $\psi \in [0,1]$, from the fixed point to the boundary. Now consider a small deformation of the flow lines, such that they remain parameterized by the same values of $\psi$, namely $\psi=0$ at the fixed point and $\psi = 1$ at the boundary. The stagnation point may translate and the concentric circles around it deform. It is known that a stationary flow in a disk having a single fixed point must be circular and thus the fixed point must be positioned at the disk's centre. We expect this rigidity of the fixed point's position relative to the parameters to hold for general non-circular flows as well. We thus introduce  $p = (p_x, p_y) \in \R^2$, the position of the fixed point, as an unknown in the problem. Now letting $(r,\theta)$ be polar coordinates centred at $p$, the flow lines can be expressed as the graphs of a family of polar functions $r=a(\psi,\theta)$, so long as the deformations are not too large. The flow lines are thus parameterized in the computational domain $(\psi,\theta) \in [0,1] \times \T = \Pi$.

Inverting the Jacobian of transformation $r=a(\psi,\theta)$ and applying the chain rule, one can derive expressions for velocity and vorticity in the new coordinates. One finds
\begin{equation}
\mathbf{u}(\psi,\theta) = \frac{1}{a_\psi} \big( \frac{a_\theta}{a}, 1 \big) \qquad \text{ in $(\hat{r}, \hat{\theta})$ coordinates,}
\end{equation}
and
\begin{equation}\label{nonlinear}
\Delta \psi = \Xi(a) = -\frac{1}{a_\psi^3} \Big( 1 + \frac{a_\theta^2}{a^2} \Big) a_{\psi \psi} + 2 \Big( \frac{a_\theta}{a^2 a_\psi^2} \Big) a_{\psi \theta} - \Big( \frac{1}{a^2 a_\psi}\Big) a_{\theta \theta} + \frac{1}{a a_\psi}. 
\end{equation}
Observe that the flow lines degenerate to a meaningful stagnation point as $\psi \to 0^+$ if $a(\psi,\theta) \to 0$ and $\lvert a_\psi(\psi,\theta) \rvert \to \infty$. In particular, the coordinate change transforms critical points of $\psi \sim x^2+y^2$ to square root singularities $a(\psi,\theta) \sim \psi^{1/2}$. 

The vorticity $\Xi(a)$ is a second order quasilinear differential operator of the flow lines. Such operators of form $A a_{\psi \psi} + 2B a_{\psi \theta} + C a_{\theta \theta} + D$, are elliptic if $AC-B^2 > 0$. A straightforward calculation shows $AC-B^2 = (a^2 a_\psi^4)^{-1}$. Restricting to non-degenerate stagnation points ($a \sim \psi^{1/2}$), one sees that $AC-B^2 \to 0$ as $\psi \to 0^{+}$. We conclude that ellipticity of $\Xi$ degenerates along the boundary $\psi=0$ of the computational domain $\Pi$, where the fixed point is described.

The boundary conditions are complicated by two facts: first is the unknown position of the fixed point and second is that fixing the values of $\psi$ to the interval $[0,1]$ yields an overdetermined problem. We address these issues as follows. Suppose $a(\psi,\theta)$ is a stationary flow in a prescribed simply connected domain $\Omega$. Then the graph of $a(1,\theta)$ corresponds to $\partial \Omega$. However, $(r,\theta)$ coordinates depend on position of fixed point $p$ which is an unknown, and thus cannot be used for the prescribed parameters. Instead introduce polar coordinates $(\rho,\varphi)$ centred at the origin. If $\Omega$ is close enough to the disk, then $p$ is close enough to the origin, so $\partial \Omega$ can be represented as a graph of a polar function in both $(r,\theta)$ and $(\rho, \varphi)$ coordinates. We can thus prescribe $\Omega$ as the domain bounded by the graph of function $\rho = b(\varphi)$. The consequence however is that we must pass from $(\rho,\varphi)$ to $(r,\theta)$ coordinates in the boundary condition.

The second obstacle stems from the observation that specifying $\psi = 0$ at the fixed point yields an interior point condition on \ref{stationary}, making the problem overdetermined by one degree of freedom. As a result, only a co-dimension one subset of the parameters (vorticity and domain) yield a well-posed problem, the remaining are incompatible. To work around this, we introduce an extra degree of freedom $R$ in the solution, which radially rescales the graph of $r=a(\psi,\theta)$ relative to the fixed point $p$. Now given some prescribed vorticity and boundary $\rho = b(\varphi)$, we are looking for a fixed point $p$ and a family of flow lines $a(\psi,\theta)$ such that upon rescaling by $R$, the graph of $r=Ra(1,\theta)$ equals the graph of $\rho = b(\varphi)$. Refer to the following figure of the boundary condition.

\begin{figure}[H]
\scalebox{0.735}
{
\centering
\begin{tikzpicture}
	\draw[thick,<->] (-4,0) -- (8,0) node[anchor=north west] {$x$};
	\draw[thick,<->] (0,-4) -- (0,7) node[anchor=south east] {$y$};
	\filldraw [black] (2,1) circle (2pt);
	\draw[dashed,->] (2,1) -- (8,1);
	\draw[dashed, ->] (2,1) -- (2,7);
	\draw[ultra thin] (2,-0.1) -- (2,0.1);
	\node at (2,-0.4) {$p_x$};
	\draw[ultra thin] (-0.1,1) -- (0.1,1);
	\node at (-0.4,1) {$p_y$};
	\draw[->] (2,1) -- (3, 6.21);
	\draw[ultra thin, ->] (2.75, 1) arc (0:79.16:0.75); 
	\node at (2.85,1.67) {$\theta$};
	\node at (3.7,3) {$r=Ra(1,\theta)$};
	\node at (4.2,6.5) {$r=a(1,\theta)$};
	\draw[->] (0,0) -- (2.81, 5.23);
	\draw[ultra thin, ->] (0.75, 0) arc (0:61.75:0.75);
	\node at (0.9, 0.5) {$\varphi$};
	\node at (1,3.7) {$\rho = b(\varphi)$};
	\pgfmathsetseed{33}
	\draw[very thick, domain=-0:350, smooth cycle, variable=\t] plot ({2+ 4*cos(\t)+rnd*0.5},{1+4*sin(\t)+rnd*0.5});
	\pgfmathsetseed{33}
	\draw[ultra thin, domain=-0:350, smooth cycle, variable=\t] plot ({2+ 5*cos(\t)+rnd*0.5},{1+5*sin(\t)+rnd*0.5});
\end{tikzpicture}
}
\caption*{The inner deformed circle represents the prescribed boundary flow line, defined by the graph of $\rho = b(\varphi)$. We seek a family of flow lines $a(\psi,\theta)$ about some fixed point $p$ which when rescaled by some $R$, matches the boundary at $\psi =1$. The unscaled flow line $r=a(1,\theta)$, depicted by the outer deformed circle, defines a new domain of flow of the same shape as the prescribed one, of a radius compatible with the prescribed vorticity.}
\end{figure}

We obtain the following equations relating $b(\varphi)$, $R$ and $a(1,\theta)$:
\begin{equation*}
b(\varphi)\cos \varphi = p_x +Ra(1,\theta)\cos \theta, \qquad b(\varphi)\sin\varphi = p_x +Ra(1,\theta)\sin \theta.
\end{equation*}
Let us introduce $\varphi = \arctan(y,x)$ as the function onto $\T$ whose values are given by the angle between plane vector $(x,y)$ and the $x$-axis (as opposed to the usual inverse tangent function defined only on the half plane). Then the above equations can be reduced to a single condition on the boundary, written $B(b,R,p,a) = 0$, where
\begin{multline}\label{boundary}
B(b,R,p,a) = -b^2\Big( \arctan\big(p_y + Ra(1,\theta)\sin\theta, p_x + Ra(1,\theta)\cos\theta\big) \Big)\\
 + R^2a^2(1,\theta) + 2Ra(1,\theta)\big(p_x \cos \theta + p_y \sin \theta) + p_x^2+p_y^2.
\end{multline}

Including the mentioned correction to the boundary condition, the transformed equation of stationary flow \ref{stationary} reads:
\begin{equation}\label{NLBVP}
\Xi(a) = F(\psi), \qquad B(b,R,p,a) = 0.
\end{equation}
 Given parameters $b(\varphi)$ and $F(\psi)$ defined in $\T$ and $[0,1]$ respectively, the equation is to be solved for $R \in \R$, $p \in \R^2$ and $a(\psi,\theta)$ in domain $\Pi = [0,1] \times \T$. 

We look for solutions near the reference flow $\psi = x^2 + y^2$, which in our coordinates is given by $R=1$, $p=0$, $a(\psi,\theta) =  \psi^{1/2}$. These solutions should be taken in some appropriate function space with the following properties. In a sufficiently small neighbourhood of the reference, the functions should describe families of flow lines degenerating to a unique fixed point at $\psi =0$. In particular, for the fixed points to be non-degenerate, the functions should behave asymptotically like $a \sim \psi^{1/2}$ as $\psi \to 0^+$. For such functions, ellipticity of operator $\Xi$ degenerates at $\psi=0$ so the function space should be suited to this scenario. Finally the functions should incorporate the partial-analytic structure of the flow lines, namely, $\theta \to a(\psi,\theta)$ should be analytic functions. For insight on the degeneracy, let us look at the linearization of \ref{NLBVP} with respect to the reference solution. After dropping some multiplicative factors, one obtains the following linear problem:

\begin{equation} \label{LBVP}
\begin{cases}
\psi^{-1/2} \Bigl[ \psi^2 \pdv[2]{\psi} + 2 \psi \pdv{\psi} + \frac{1}{4} ( I + \pdv[2]{\theta} ) \Bigr] u(\psi,\theta) = f(\psi,\theta) \\
R + ( \frac{p_x-ip_y}{2} )e^{i\theta}  + ( \frac{p_x+ip_y}{2} ) e^{-i\theta} + u(1,\theta)= g(\theta),
\end{cases}
\end{equation}
to be solved for $R$, $p_x$, $p_y$ and $u(\psi,\theta)$ given data $f(\psi,\theta)$ and $g(\theta)$.

Modulo the factor of $\psi^{-1/2}$, the degeneracies are of form $\psi \pdv{\psi}$. Such order-one degeneracies are typical of elliptic equations on manifolds with conical singularities. Typically in such a context, the singular point is deleted and the manifold is stretched along the removed singular point (consider a cone blown up to a cylinder). In doing so, the problem is transformed to a degenerate  elliptic equation on a manifold with boundary. In his seminal paper (\cite{Ko}), Kondratev introduced weighted Sobolev spaces in which such boundary value problems are Fredholm. Such a formulation is similar to our own, where a family of simple closed curves degenerate to a single fixed point. Adapted to our scenario, where the degeneracy occurs along $\psi=0$, we define the Kondratev spaces $K_\gamma^m[0,1]$ and $K_\gamma^m(\Pi)$ with norms
\begin{equation}
\left\lVert u(\psi) \right\rVert_{K_\gamma^m[0,1] }^2 = \sum_{p=0}^m \left\lVert \psi^{p - \gamma} D^p u(\psi)  \right\rVert_{L^2 [0,1] }^2
\end{equation}
and
\begin{equation}
\left\lVert u(\psi,\theta) \right\rVert_{K_\gamma^m(\Pi) }^2 = \sum_{p +q=0}^m \left\lVert \psi^{p - \gamma} \partial_\psi^p \partial_\theta^q u(\psi,\theta)  \right\rVert_{L^2 (\Pi) }^2,
\end{equation}
respectively. The parameter $\gamma$ controls the behaviour of the asymptotics as $\psi \to 0^+$ and the weight in the norms is homogeneous with respect to derivatives in $\psi$. A detailed overview of these spaces can be found in \cite{BoKo}. We list a few important properties.

\begin{proposition} \label{Kondratev properties} \
\begin{itemize}
\item $u(\psi,\theta) \to \psi^{\alpha}u(\psi,\theta) : K_\gamma^m(\Pi) \to K_{\gamma+\alpha}^m(\Pi)$ defines an isomorphism.
\item $u(\psi,\theta) \to \partial_\psi u(\psi,\theta) : K_\gamma^m(\Pi) \to K_{\gamma-1}^{m-1}(\Pi)$ is bounded.
\item $u(\psi,\theta) \to \partial_\theta u(\psi,\theta) : K_\gamma^m(\Pi) \to K_{\gamma}^{m-1}(\Pi)$ is bounded.
\item $u(\psi,\theta) \to u(1,\theta) : K_\gamma^m(\Pi) \to H^{m-1/2}(\T)$ is bounded.
\end{itemize}
\end{proposition}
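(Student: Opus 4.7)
The plan is to verify each of the four properties essentially from the definition of the Kondratev norm, with only the trace bullet requiring an auxiliary classical result. For bullet one, since $\partial_\theta$ commutes with multiplication by $\psi^\alpha$, I would apply the Leibniz rule in the $\psi$ variable alone:
\begin{equation*}
\partial_\psi^p \partial_\theta^q (\psi^\alpha u) = \sum_{k=0}^p \binom{p}{k}\, \alpha(\alpha-1)\cdots(\alpha-k+1)\, \psi^{\alpha-k}\, \partial_\psi^{p-k} \partial_\theta^q u.
\end{equation*}
Multiplying through by the target weight $\psi^{p-(\gamma+\alpha)}$ produces a finite sum of terms of the form $c_k\, \psi^{(p-k)-\gamma} \partial_\psi^{p-k} \partial_\theta^q u$ with $(p-k)+q \le m$, each of which is controlled in $L^2$ by $\lVert u \rVert_{K_\gamma^m(\Pi)}$. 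This gives the bound $\lVert \psi^\alpha u \rVert_{K_{\gamma+\alpha}^m} \lesssim \lVert u \rVert_{K_\gamma^m}$; the inverse map is multiplication by $\psi^{-\alpha}$, to which the identical argument applies, yielding the isomorphism.

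Bullets two and three are immediate upon inspecting the norm. The sum defining $\lVert \partial_\psi u \rVert_{K_{\gamma-1}^{m-1}}^2$ consists of the quantities $\psi^{p-(\gamma-1)} \partial_\psi^p \partial_\theta^q (\partial_\psi u) = \psi^{(p+1)-\gamma}\,\partial_\psi^{p+1}\partial_\theta^q u$ for $p+q \leq m-1$; these form a subset of the terms appearing in $\lVert u \rVert_{K_\gamma^m}^2$. The analogous observation with $\partial_\theta$, which introduces no weight shift at all, gives bullet three.

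For the trace bullet four, the key point is that on any strip $[\delta, 1]\times\T$ bounded away from $\psi=0$, the weight $\psi^{p-\gamma}$ is bounded above and below, so the $K_\gamma^m$ norm is equivalent there to the ordinary Sobolev norm $H^m$. After multiplying $u$ by a smooth cutoff in $\psi$ supported near $\psi=1$, the classical trace theorem on the cylinder $[\delta,1]\times\T$ furnishes a bounded evaluation map $H^m \to H^{m-1/2}(\T)$ at $\psi=1$. The main (and only) technical content is the usual half-derivative loss of the Sobolev trace, already accounted for in the target space; no further argument in the degenerate direction is needed, since the degeneracy of the weight lies at the opposite end $\psi = 0$.
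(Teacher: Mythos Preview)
Your argument is correct and is essentially the standard verification one would give from the definition of the Kondratev norm. The paper does not actually supply a proof of this proposition: it is stated as a list of known properties with a pointer to the reference \cite{BoKo}, so there is no proof in the paper to compare against. Your Leibniz-rule computation for the multiplication map, the direct index-shift observations for $\partial_\psi$ and $\partial_\theta$, and the localization-plus-classical-trace argument at $\psi=1$ are all valid and are exactly the kind of routine checks the cited reference would contain.
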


\begin{proposition} \label{Morrey Kondratev} \ \\
There exists $C>0$ depending on $\gamma$, $m$, $k$ for which
\begin{itemize}
\item $\lVert \partial_\psi^ku(\psi,\cdot) \rVert_{H^{m-k-1/2}(\T)} \leq C\psi^{\gamma-k-1/2} \lVert u \rVert_{K_\gamma^m(\Pi)}$ for $m-k>1/2$.
\item $\lvert \partial_\psi^ku(\psi,\theta) \rvert \leq C\psi^{\gamma-k-1/2} \lVert u \rVert_{K_\gamma^m(\Pi)}$ for $m-k>1$.
\end{itemize}
\end{proposition}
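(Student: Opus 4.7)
The plan is to reduce the statement to the standard trace and Sobolev embedding theorems on a half-cylinder, via the logarithmic change of variables that straightens out the Kondratev weight.

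First, I would introduce $s=-\log\psi$, mapping $\psi\in(0,1]$ bijectively to $s\in[0,\infty)$, and define
\begin{equation*}
v(s,\theta) := e^{(\gamma-1/2)s}\, u(e^{-s},\theta).
\end{equation*}
The key algebraic identity is $\psi\,\partial_\psi = -\partial_s$, whence $\psi^p\partial_\psi^p = \prod_{j=0}^{p-1}(\psi\partial_\psi - j)$ is a polynomial of degree $p$ in $\partial_s$ (up to the multiplicative factor $e^{(\gamma-1/2)s}$ absorbed by $v$). Combined with $d\psi\,d\theta = e^{-s}ds\,d\theta$, this shows that
\begin{equation*}
\lVert u\rVert_{K^m_\gamma(\Pi)}^2 \;\asymp\; \sum_{p+q\le m}\lVert \partial_s^p\partial_\theta^q v\rVert_{L^2([0,\infty)\times\T)}^2 \;=\; \lVert v\rVert_{H^m([0,\infty)\times\T)}^2,
\end{equation*}
so the map $u\mapsto v$ is a bounded isomorphism $K^m_\gamma(\Pi)\to H^m([0,\infty)\times\T)$.

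Next, fix $\psi_0\in(0,1]$ and set $s_0=-\log\psi_0$. The standard trace theorem on a slice of the half-cylinder gives, for each $j\le k$,
\begin{equation*}
\lVert \partial_s^j v(s_0,\cdot)\rVert_{H^{m-j-1/2}(\T)} \le C\lVert v\rVert_{H^m([0,\infty)\times\T)},
\end{equation*}
provided $m-j > 1/2$, which is implied by the hypothesis $m-k>1/2$ for all $j\le k$. Since $H^{m-j-1/2}(\T)\hookrightarrow H^{m-k-1/2}(\T)$ for $j\le k$, the same bound holds with $H^{m-k-1/2}(\T)$ on the left. Inverting the change of variables, one expresses
\begin{equation*}
\psi_0^k\,\partial_\psi^k u(\psi_0,\theta) \;=\; \psi_0^{\gamma-1/2}\sum_{j\le k} \tilde c_j\, \partial_s^j v(s_0,\theta),
\end{equation*}
so that $\partial_\psi^k u(\psi_0,\cdot) = \psi_0^{\gamma-k-1/2}\sum_{j\le k}\tilde c_j\,\partial_s^j v(s_0,\cdot)$; taking $H^{m-k-1/2}(\T)$ norms on both sides and using the trace bound together with the norm equivalence gives the first inequality. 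For the pointwise estimate, I would append the Sobolev embedding $H^{m-k-1/2}(\T)\hookrightarrow L^\infty(\T)$, valid when $m-k-1/2 > 1/2$, i.e.\ $m-k>1$, to upgrade the trace to a pointwise bound with the same weight.

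The only subtle step is the bookkeeping in the norm equivalence of step one: one must check that the polynomial identity $\psi^p\partial_\psi^p=\prod_{j=0}^{p-1}(\psi\partial_\psi-j)$, the conjugation by $e^{(\gamma-1/2)s}$, and the Jacobian $e^{-s}$ conspire to produce exactly the $H^m$ norm of $v$ on the half-cylinder with constants depending only on $m$ and $\gamma$. Once that algebra is in hand, everything else is a direct invocation of classical trace and Sobolev theorems, with the weight $\psi_0^{\gamma-k-1/2}$ emerging purely from undoing the exponential rescaling at the slice $s=s_0$.
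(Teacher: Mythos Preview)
Your argument is correct and is in fact the classical route to these estimates. The paper itself does not supply a proof of this proposition: it is stated as a standard property of Kondratev spaces, with a reference to \cite{BoKo}. Your logarithmic substitution $s=-\log\psi$ together with the conjugation $v=e^{(\gamma-1/2)s}u$ is precisely the device that underlies the Kondratev theory, reducing the weighted problem to an unweighted Sobolev problem on a half-cylinder where trace and embedding theorems are available off the shelf.

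One small point worth making explicit: when you invoke the trace theorem at the slice $s=s_0$, you need the constant to be independent of $s_0\in[0,\infty)$. This follows by localizing to a unit-width strip $[s_0,s_0+1]\times\T$ (or $[0,1]\times\T$ near the boundary) and using translation invariance, but it deserves a sentence since the half-cylinder is unbounded. Otherwise the bookkeeping you outline---that $\psi^p\partial_\psi^p$ becomes a degree-$p$ polynomial in $\partial_s$, that the conjugation shifts this polynomial by $\gamma-1/2$, and that the Jacobian $e^{-s}$ cancels the residual $e^{s}$ to leave an unweighted $L^2$---is exactly right, and the weight $\psi_0^{\gamma-k-1/2}$ falls out cleanly from undoing the exponential at $s_0$.
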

The latter proposition clarifies the dependence of behaviour as $\psi \to 0^+$ on the parameter $\gamma$. In particular, functions in $K_\gamma^m(\Pi)$ are continuous for $m>1$ and $\gamma>1/2$ and in this case they necessarily vanish as $\psi \to 0^+$.

For our purposes, the above spaces are not quite adequate. One can easily check that for any choice of $\gamma$ such that $\psi^{1/2} \in K_\gamma^m$, every neighbourhood of this function necessarily contains perturbations of lower order, say $\psi^{\mu}$, where $\mu < 1/2$. Such perturbations present an obstruction to the description of a family of flow lines uniquely degenerating to a non-degenerate fixed point at $\psi=0$. Instead we should consider only angular and higher order perturbations of $\psi^{1/2}$ (or more generally, $\psi^{\lambda}$). We thus introduce the following spaces of functions, asymptotically behaving like $\psi^\lambda$ as $\psi \to 0^+$:

\begin{definition}[Spaces of $\lambda$-order asymptotics as $\psi \to 0^+$]
\begin{equation*}
J_{\lambda, \gamma}^m[0,1] = \left\{ a(\psi) = v \psi^\lambda + w(\psi) \colon v \in \R, w(\psi) \in K_{\lambda + \gamma}^m[0,1] \right\}
\end{equation*}
\begin{equation*}
J_{\lambda, \gamma}^m(\Pi) = \left\{ a(\psi,\theta) = v(\theta) \psi^\lambda + w(\psi,\theta) \colon v(\theta) \in H^m(\T), w(\psi,\theta) \in K_{\lambda + \gamma}^m(\Pi) \right\}
\end{equation*}
with norms induced by the direct sum $J_{\lambda, \gamma}^m[0,1] \cong \R \oplus K_{\gamma+ \lambda}^m[0,1]$ and $J_{\lambda, \gamma}^m(\Pi) \cong H^m(\T) \oplus K_{\gamma+ \lambda}^m(\Pi)$.
\end{definition}
These direct sums are well defined when $\lambda \geq 1/2$, which is equivalent to the condition that the remainder terms $w \in K_{\gamma+\lambda}$ exclusively consist of asymptotics of order greater than $\psi^{\lambda}$.

Following the previous work (\cite{Da}), we introduce the partially-analytic (analytic in $\theta$) structure on the subset of these functions with analytic continuations from $\T$ to the strip $\T_\sigma = \T \times i(-\sigma, \sigma)$. For completeness, we remind first the space of individual complex analytic flow lines $X_\sigma^m(\T)$:

\begin{definition}\ \\
Let $a(x)$ be a function on the circle $\T$ with Fourier series $\sum_k \hat{a}_k e^{ikx}$. 
	\begin{itemize}
		\item $X_\sigma^m(\T)$ is the space of functions $a(x)$ on the circle with norm
			\begin{equation*}
			\lVert a(x) \rVert_{X_\sigma^m}^2 =\lVert \F_{k \to x}^{-1} \{ \hat{a}_k e^{\sigma \lvert k \rvert} \} \rVert_{H^m(\T)}^2 =  \sum_k (1+k^2)^m e^{2\sigma \lvert k \rvert} \lvert \hat{a}_k \rvert^2.
			\end{equation*}
		\item Equivalently, $X_\sigma^m(\T)$ is the space of analytic functions 
		\begin{equation*}
		z= x+it \to a(z): \T_\sigma \to \C
		\end{equation*}
		with norm
			\begin{equation*}
			\lVert a(z) \rVert_{X_\sigma^m}^2 = \lVert a(\cdot + i \sigma) \rVert_{H^m(\T)}^2 + \lVert a(\cdot - i \sigma) \rVert_{H^m(\T)}^2.	
			\end{equation*}
	\end{itemize}
\end{definition}

Next, a straightforward adaptation of the Paley-Wiener theorem to Banach-valued analytic functions gives us two equivalent characterizations for the partially-analytic family of flow lines.
\begin{definition}\
\begin{itemize}
\item 
$J_{\lambda, \gamma}^{m, \sigma}(\Pi) = \left\{ a(\psi,\theta) \in J_{\lambda, \gamma}^m(\Pi) : \F_{k \to \theta}^{-1} \{ \hat{a}_k(\psi)e^{\sigma \lvert k \rvert} \} \in J_{\lambda, \gamma}^m(\Pi) \right\}$ \\
with norm
\begin{equation*}
\lVert a(\psi,\theta) \rVert_{J_{\lambda, \gamma}^{m,\sigma}(\Pi)} = \lVert \F_{k \to \theta}^{-1} \{ \hat{a}_k(\psi)e^{\sigma \lvert k \rvert} \} \rVert_{J_{\lambda,\gamma}^m(\Pi)}.
\end{equation*}
\item 
$J_{\lambda,\gamma}^{m,\sigma}(\Pi)$ is the space of holomorphic functions
\begin{equation*}
z = x + it \to a(\cdot, z) : \T_\sigma \to J_{\lambda,\gamma}^m[0,1]
\end{equation*}
with norm
\begin{equation*}
\lVert a(\psi,z) \rVert_{J_{\lambda,\gamma}^{m,\sigma}(\Pi)}^2 = \lVert a(\cdot, \cdot + i\sigma) \rVert_{J_{\lambda, \gamma}^m(\Pi)}^2 + \lVert a(\cdot, \cdot - i\sigma) \rVert_{J_{\lambda, \gamma}^m(\Pi)}^2.
\end{equation*}
\end{itemize}
\end{definition}
The parameters defining the space $J_{\lambda, \gamma}^{m,\sigma}(\Pi)$ can be summarized as follows:
\begin{itemize}
\item $\lambda$ describes the leading order asymptotics as $\psi \to 0^+$.
\item $\gamma \geq 1/2$ defines the scale of remainder term asymptotics.
\item $m$ is the usual isotropic regularity scale.
\item $\sigma$ quantifies the partial-analyticity in $\theta$.
\end{itemize}

Now we take $J_{1/2,\gamma}^{m,\sigma}(\Pi)$ to be the space of functions $r=a(\psi,\theta)$ which in a neighbourhood of $\psi^{1/2}$ describe the partially-analytic flow lines about a non-degenerate fixed point. The restriction map $a(\psi,\theta) \to a(1,\theta)$ is bounded in $J_{1/2,\gamma}^{m,\sigma}(\Pi) \to X_\sigma^{m-1/2}(\T)$. The natural target space for \ref{NLBVP} and \ref{LBVP} is $J_{0,\gamma}^{m-2,\sigma}(\Pi)$. Functions in this space are continuous for $m>3$ and $\gamma > 1/2$. In the next chapter, we will find that this target space is inadequate due to the presence of a two-dimensional cokernel in the linearized problem which must be factored out to establish the required isomorphism. We thus define
\begin{equation}
\widetilde{J}_{0,\gamma}^{m,\sigma}(\Pi) = \left\{ u = v(\theta) + w(\psi,\theta) \in J_{0,\gamma}^{m,\sigma}(\Pi) : \int_\T v(\theta)e^{\pm 2i\theta} \dd{\theta} = 0 \right\}
\end{equation}

The complex vorticities $F(\psi)$ are taken in the space $J_{0,\gamma}^{m-2}[0,1]$, which consists of continuous perturbations of the constant function by higher order perturbations. It naturally embeds into $\widetilde{J}_{1/2,\gamma}^{m-2,\sigma}(\Pi)$  by constant continuation along $\theta$, that is, $F(\psi) = F(\psi,\theta)$. 

We will see that to accommodate the coordinate change in the boundary condition of \ref{NLBVP}, we must prescribe the analytic boundary data $\rho = b(\varphi)$ on a complex strip $\T_\tau = \T \times i(-\tau,\tau)$ that is slightly wider than used to describe the solutions. In particular, it suffices to take $b(\varphi) \in \mathrm{H}(\T_\tau)$ for any $\tau > \sigma$, where $\mathrm{H}(\T_\tau)$ is any Banach space of holomorphic functions in $\T_\tau$.

\begin{definition}[Complexified nonlinear boundary value problem] \label{complex operator equation}\ \\
Define operator
\begin{equation*}
(F,b,R,p,a) \to \left( \Xi(a) - F(\psi), B(b,R,p,a) \right)
\end{equation*}
in spaces
\begin{equation*}
J_{0,\gamma}^{m-2}[0,1] \times \mathrm{H}(\T_\tau) \times \C^3 \times J_{1/2,\gamma}^{m,\sigma}(\Pi) \to \widetilde{J}_{0,\gamma}^{m-2,\sigma}(\Pi) \times X_\sigma^{m-1/2}(\T).
\end{equation*}
Solutions to \ref{NLBVP} are the zeros of the above operator. 

In other words, we consider the problem to find $R \in \C$, $p \in \C^2$, $a(\psi,\theta) \in J_{1/2,\gamma}^{m,\sigma}(\Pi)$ given parameters $F(\psi) \in J_{0, \gamma}^{m-2}[0,1]$ and $b(\varphi) \in \mathrm{H}(\T_\tau)$. At least we look for solutions near the reference given by  $R=1$, $p=0$, $a(\psi,\theta) = \psi^{1/2}$, $b(\varphi) = 1$ and $F(\psi) = 4$.
\end{definition}

The main tool to solve this problem will be the analytic implicit function theorem in complex Banach spaces, which gives condition under which an operator equation with parameter has a unique local solution.

\begin{theorem}[Analytic Banach implicit function theorem] \ \\
Let $X,Y,Z$ be complex Banach spaces and $f: X \times Y \to Z$ be an analytic map in a neighbourhood of $(x_0,y_0) \in X \times Y$. Suppose $f(x_0,y_0) = 0$ and $\pdv{f}{y} (x_0,y_0) : Y \to Z$ is an isomorphism. Then there exists a neighbourhood of $(x_0,y_0,0) \in X \times Y \times Z$ in which the equation $f(x,y)=0$ has a unique solution, which is parameterized by an analytic function $y=g(x) : X \to Y$.
\end{theorem}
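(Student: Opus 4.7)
The plan is to reduce the equation $f(x,y)=0$ to a fixed-point equation and apply the Banach contraction mapping principle with holomorphic dependence on the parameter $x$. Let $L = \partial_y f(x_0,y_0): Y \to Z$, which is a Banach space isomorphism by hypothesis. Define the auxiliary map
\begin{equation*}
T(x,y) = y - L^{-1} f(x,y),
\end{equation*}
which is analytic on a neighbourhood of $(x_0,y_0)$ since $f$ is analytic and $L^{-1}$ is a bounded linear map. By construction, $y$ solves $f(x,y)=0$ if and only if $y$ is a fixed point of $T(x,\cdot)$, and moreover $\partial_y T(x_0,y_0) = I - L^{-1}L = 0$.

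Continuity of $\partial_y T$ (which follows from analyticity of $f$) then supplies radii $r_1, r_2>0$ such that $\lVert \partial_y T(x,y) \rVert \leq 1/2$ on $B_X(x_0,r_1) \times B_Y(y_0,r_2)$, so that $T(x,\cdot)$ is a $1/2$-contraction on $B_Y(y_0,r_2)$ for each $x$ near $x_0$. Shrinking $r_1$ further using continuity of $T$ and the estimate $T(x_0,y_0)=y_0$, one ensures $T(x,\cdot)$ maps $\overline{B_Y(y_0,r_2)}$ into itself for all $x \in B_X(x_0,r_1)$. The Banach fixed point theorem now produces a unique $y=g(x) \in B_Y(y_0,r_2)$ with $f(x,g(x))=0$, and a standard telescoping estimate of the iterates $T^n(x,y_0)$ shows $g$ is continuous.

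The main obstacle is upgrading $g$ from continuous to analytic. The cleanest route is via the characterization of holomorphic maps between complex Banach spaces as those which are locally bounded and G\^ateaux-complex-differentiable. Formally differentiating the relation $f(x,g(x))=0$ suggests the candidate derivative
\begin{equation*}
g'(x) = -\bigl[\partial_y f(x,g(x))\bigr]^{-1} \partial_x f(x,g(x)),
\end{equation*}
which is well-defined in a neighbourhood of $x_0$ because $\partial_y f(x_0,y_0)$ is invertible and the set of invertible operators is open. To justify this rigorously, restrict $g$ to a complex line through $x_0$ in $X$: on this line $f$ becomes an analytic function of one complex variable with values in $Z$, and classical one-variable holomorphic implicit function arguments (or a direct Cauchy-integral representation of $g$ obtained by integrating $y$ against $[\partial_y f(x,y)]^{-1}\partial_y f(x,y)/(y-g(x))$ over a small circle in $Y$) show the restriction is holomorphic. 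Combined with local boundedness, this yields analyticity of $g$ on the polydisc. An alternative, equivalent approach is to construct $g$ directly by its formal Taylor series at $x_0$: the coefficients are determined recursively by implicit differentiation of $f(x,g(x))=0$, and Cauchy estimates on $f$ combined with $L^{-1}$ bounds show the resulting series converges in a neighbourhood of $x_0$, necessarily to the fixed point constructed above by uniqueness.
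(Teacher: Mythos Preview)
The paper does not supply a proof of this theorem; it is stated as a standard tool and used without argument. So there is no paper-side proof to compare against, and your proposal must be judged on its own.

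Your contraction-mapping reduction is correct and standard: with $T(x,y)=y-L^{-1}f(x,y)$ one has $\partial_y T(x_0,y_0)=0$, so continuity of $\partial_y T$ gives a uniform contraction on a product of balls, and the usual self-mapping check goes through after shrinking $r_1$. This yields existence, uniqueness, and continuity of $g$.

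The analyticity step, however, is where your write-up wobbles. The parenthetical about a ``Cauchy-integral representation of $g$ obtained by integrating $y$ against $[\partial_y f(x,y)]^{-1}\partial_y f(x,y)/(y-g(x))$ over a small circle in $Y$'' does not make sense as stated: there is no meaningful ``small circle in $Y$'' when $Y$ is an infinite-dimensional Banach space, and the integrand you wrote is identically $1/(y-g(x))$ anyway. Drop this. The cleanest argument, which you almost have in hand, is to observe that each Picard iterate $g_n(x)=T^n(x,y_0)$ is analytic in $x$ (composition of analytic maps), and the contraction estimate gives \emph{uniform} convergence $g_n\to g$ on $B_X(x_0,r_1)$. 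Uniform limits of holomorphic maps between complex Banach spaces are holomorphic (this is the Banach-valued Weierstrass theorem, proved via Cauchy's integral formula on complex lines in $X$), so $g$ is analytic. Your alternative via formal Taylor series and majorant estimates also works but is heavier than needed here.
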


The remaining body of this article is devoted to proving that the conditions of the implicit function theorem are satisfied for the above defined operator equation. First, by direct construction of the inverse, we show the linearized problem \ref{LBVP} defines an isomorphism. Required estimates follow from the Hardy inequality. Second, we show that the defining nonlinear operator is analytic. This requires generalizing typical results on composition (superposition) maps in Sobolev spaces to the more exotic functional setting we are working in.


\section{Local solvability by analytic implicit function theorem}

In this section, we prove that the conditions of the implicit function theorem are satisfied for the operator equation \ref{complex operator equation}. First, by direct construction of the inverse, we show the linearized problem \ref{LBVP} defines an isomorphism. Required estimates follow from the Hardy inequality. Second, we show that the defining nonlinear operator is analytic. This requires generalizing typical results on composition (superposition) maps in Sobolev spaces to the more exotic functional setting we are working in.


\subsection{Isomorphism of the linearization}\ \\

The linearization of the nonlinear boundary value problem at reference solution $a(\psi,\theta) = \psi^{1/2}$ is given by:

\begin{definition}[Linear Problem] \label{true linear problem} \
\begin{equation*}
\begin{cases}
\psi^{-1/2} \Bigl[ \psi^2 \pdv[2]{\psi} + 2 \psi \pdv{\psi} + \frac{1}{4} ( I + \pdv[2]{\theta} ) \Bigr] u(\psi,\theta) = f(\psi,\theta) \\
R + ( \frac{p_x-ip_y}{2} )e^{i\theta}  + ( \frac{p_x+ip_y}{2} ) e^{-i\theta} + u(1,\theta)= g(\theta),
\end{cases}
\end{equation*}
to be solved for $R \in \C$, $p = (p_x,p_y) \in \C^2$ and $u(\psi,\theta) \in J_{1/2,\gamma}^{m,\sigma}(\Pi)$, given parameters $f(\psi,\theta) = \widetilde{J}_{0,\gamma}^{m-2,\sigma}(\Pi)$ and $g(\theta) \in X_\sigma^{m-1/2}(\T)$. 
\end{definition}

Since multiplication by $\psi^{1/2}$ defines an isomorphism from $J_{0,\gamma}^{m,\sigma}(\Pi)$ to $J_{1/2,\gamma}^{m,\sigma}(\Pi)$, it is thus equivalent to consider instead the problem
\begin{equation} \label{linear problem}
\begin{cases}
Lu(\psi,\theta) = \Bigl[ \psi^2 \pdv[2]{\psi} + 2 \psi \pdv{\psi} + \frac{1}{4} ( I + \pdv[2]{\theta} ) \Bigr] u(\psi,\theta) = f(\psi,\theta) \\
 R + ( \frac{p_x-ip_y}{2} )e^{i\theta}  + ( \frac{p_x+ip_y}{2} ) e^{-i\theta} + u(1,\theta) = g(\theta),
\end{cases}
\end{equation}
where $f(\psi,\theta)$ is now to be taken in $\widetilde{J}_{1/2,\gamma}^{m-2,\sigma}(\Pi)$.

To show this defines an isomorphism, we break down the proof into four parts. First, the boundedness of the linear map in the above spaces follows immediately from standard results in the Kondratev spaces. Second, we solve the homogeneous problem when $f=0$ and bound the solution by the boundary data. This requires a restriction on the permissible values of $\gamma$. Finally, we solve the inhomogeneous problem on the component spaces that make up $J_{1/2, \gamma}^{m, \sigma}(\Pi)$. The linear problem on the leading term component reduces to an ODE in $\theta$, solved by Fourier inversion. The bulk of the effort is then dedicated to solving the linear problem on the remainder component term in $K_{1/2 + \gamma}^{m,\sigma}(\Pi)$ and establishing the required bounds. Expanding to a Fourier series in $\theta$ yields a sequence of second order ODEs in $\psi$. Each corresponding second order differential operator can be factored into the product of two first order operators. Their inverses, which can be computed explicitly, are operators taking weighted averages. The main tool to establish their boundedness is the Hardy inequality (\cite{BoKo}), which bounds the $L^2$ norm of the weighted average of a function by the $L^2$ norm of said function:

\begin{theorem}[Hardy Inequality]\label{Hardy}\ \\
\begin{itemize}
\item If $\alpha < 1/2$,
\begin{equation*}
 \Bigl\lVert y^{\alpha-1} \int_0^y x^{-\alpha}f(x) \dd{x} \Bigr\rVert_{L^2[0,1]} \leq \frac{1}{\frac{1}{2}-\alpha} \lVert f \rVert_{L^2[0,1]}.
\end{equation*} 
\item If $\alpha > 1/2$,
\begin{equation*}
 \Bigl\lVert y^{\alpha-1} \int_y^1 x^{-\alpha}f(x) \dd{x} \Bigr\rVert_{L^2[0,1]} \leq \frac{1}{\alpha-\frac{1}{2}} \lVert f \rVert_{L^2[0,1]}.
\end{equation*} 
\end{itemize}
\end{theorem}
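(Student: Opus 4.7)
The plan is to prove both inequalities by integrating by parts against the singular weight and then applying Cauchy--Schwarz, first reducing to a dense class of smooth test functions so that the boundary terms are controlled. The central identity is
\begin{equation*}
y^{2\alpha-2} = \frac{1}{2\alpha-1}\frac{d}{dy}y^{2\alpha-1},
\end{equation*}
which allows one to absorb the singular weight into a derivative and transfer it onto the indefinite integral of $f$.

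For the first inequality, set $F(y)=\int_0^y x^{-\alpha}f(x)\dd{x}$, so that $F(0)=0$ and $F'(y) = y^{-\alpha}f(y)$. For $f \in C_c^\infty(0,1)$, integration by parts gives
\begin{equation*}
\int_0^1 y^{2\alpha-2}F(y)^2\dd{y} = \frac{F(1)^2}{2\alpha-1} - \frac{2}{2\alpha-1}\int_0^1 y^{\alpha-1}F(y)f(y)\dd{y}.
\end{equation*}
Since $\alpha < 1/2$, the first term is non-positive, and Cauchy--Schwarz applied to the second produces the quadratic inequality $A^2 \leq \frac{2}{1-2\alpha} A \lVert f \rVert_{L^2}$ for $A = \lVert y^{\alpha-1}F \rVert_{L^2}$, whence $A \leq (1/2-\alpha)^{-1}\lVert f \rVert_{L^2}$. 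The second inequality is symmetric: take $G(y)=\int_y^1 x^{-\alpha}f(x)\dd{x}$ with $G(1)=0$ and $G'(y) = -y^{-\alpha}f(y)$; for $f \in C_c^\infty(0,1)$ the hypothesis $\alpha > 1/2$ makes $y^{2\alpha-1}G(y)^2 \to 0$ at $y=0$, and the analogous computation yields the constant $(\alpha - 1/2)^{-1}$.

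The only step requiring a bit of care is the extension from $C_c^\infty(0,1)$ to general $f \in L^2[0,1]$, since the weight $y^{\alpha-1}$ is singular at the origin: this is standard, by approximating $f$ in $L^2$, applying the inequality to the approximants, and invoking Fatou's lemma on the weighted left-hand side to pass to the limit. I expect no serious obstacle; this is the classical Hardy inequality in its simplest form, whose complete details may be found in \cite{BoKo}.
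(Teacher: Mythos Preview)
Your proof is correct and is the classical integration-by-parts argument for Hardy's inequality. The paper does not actually give its own proof of this theorem: it is stated as a known tool, with a citation to \cite{BoKo}, and then used as a black box in the estimates of Proposition~\ref{second inhomogeneous}. So there is nothing to compare against; you have simply supplied the standard proof that the paper chose to omit.
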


Let us now proceed, starting with the boundedness of the linear operator.

\begin{proposition}\label{boundedness}\ \\
The linearized problem \ref{linear problem} is bounded in
\begin{equation*}
\C^3 \times J_{1/2, \gamma}^{m, \sigma}(\Pi) \to J_{1/2, \gamma}^{m-2, \sigma}(\Pi) \times X_\sigma^{m-1/2}(\T).
\end{equation*}
\end{proposition}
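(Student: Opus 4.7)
The plan is to verify boundedness term by term after splitting $u$ into its leading-order and remainder parts, then reduce everything to the standard mapping properties in Proposition \ref{Kondratev properties} and the Paley--Wiener characterization of the partially-analytic norm.

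First, given $u \in J_{1/2,\gamma}^{m,\sigma}(\Pi)$, write $u(\psi,\theta) = v(\theta)\psi^{1/2} + w(\psi,\theta)$ with $v \in X_\sigma^m(\T)$ and $w \in K_{1/2+\gamma}^{m,\sigma}(\Pi)$. A direct computation using $\psi^2 \partial_\psi^2(\psi^{1/2}) = -\tfrac14 \psi^{1/2}$ and $2\psi \partial_\psi(\psi^{1/2}) = \psi^{1/2}$ gives
\begin{equation*}
L\bigl(v(\theta)\psi^{1/2}\bigr) = \bigl(v(\theta) + \tfrac14 v''(\theta)\bigr)\psi^{1/2},
\end{equation*}
so the leading term contributes to $J_{1/2,\gamma}^{m-2,\sigma}(\Pi)$ with norm controlled by $\lVert v\rVert_{X_\sigma^m}$, since $v\mapsto v+\tfrac14 v''$ is bounded $X_\sigma^m(\T)\to X_\sigma^{m-2}(\T)$.

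For the remainder, I would apply the four mapping properties of Proposition \ref{Kondratev properties} coefficient by coefficient: $\partial_\psi^2 w \in K_{\gamma-3/2}^{m-2}$, so $\psi^2 \partial_\psi^2 w \in K_{\gamma+1/2}^{m-2}$; likewise $\psi \partial_\psi w \in K_{\gamma+1/2}^{m-1} \hookrightarrow K_{\gamma+1/2}^{m-2}$; and $(I+\partial_\theta^2)w \in K_{\gamma+1/2}^{m-2}$. Summing,
\begin{equation*}
\lVert Lw \rVert_{K_{\gamma+1/2}^{m-2}(\Pi)} \lesssim \lVert w \rVert_{K_{\gamma+1/2}^m(\Pi)}.
\end{equation*}
Because the coefficients of $L$ depend only on $\psi$ (not on $\theta$), the operator commutes with the Fourier multiplier $\hat a_k \mapsto \hat a_k e^{\sigma|k|}$ used to define the $\sigma$-indexed spaces; equivalently, $L$ acts on the two traces $w(\psi,\theta \pm i\sigma)$ independently. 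Applying the previous bounds to each trace upgrades the estimate to $K_{1/2+\gamma}^{m-2,\sigma}$, and hence $Lu \in J_{1/2,\gamma}^{m-2,\sigma}(\Pi)$ with the required bound.

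For the boundary component, note that $R, e^{\pm i\theta}$ are entire in $\T_\sigma$ with $X_\sigma^{m-1/2}$-norms bounded by absolute constants times $|R|, |p_x|, |p_y|$. The trace $u(1,\theta) = v(\theta) + w(1,\theta)$ is bounded in $X_\sigma^{m-1/2}(\T)$ by $\lVert v \rVert_{X_\sigma^m} + \lVert w \rVert_{K_{1/2+\gamma}^{m,\sigma}}$ via the restriction bound in Proposition \ref{Kondratev properties}, again applied to the two $\pm i\sigma$ traces of $w$. Adding the four contributions yields the claimed boundedness
$\C^3 \times J_{1/2,\gamma}^{m,\sigma}(\Pi) \to J_{1/2,\gamma}^{m-2,\sigma}(\Pi) \times X_\sigma^{m-1/2}(\T)$. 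No genuine obstacle arises; the only care needed is to verify that all $\psi$-weights and Kondratev indices balance so the operator lands in $K_{1/2+\gamma}^{m-2}$ (not a larger weight class), which is what keeps the image inside $J_{1/2,\gamma}^{m-2,\sigma}$ rather than a space of lower $\lambda$.
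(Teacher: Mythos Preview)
Your proof is correct and follows essentially the same approach as the paper: the paper simply asserts that $\psi^2\partial_\psi^2$, $\psi\partial_\psi$, $\partial_\theta^2$ are bounded $J_{1/2,\gamma}^{m,\sigma}\to J_{1/2,\gamma}^{m-2,\sigma}$ ``immediately from definition'' and from boundedness in the Kondratev spaces, then handles the trace and the $R$, $p$ terms exactly as you do. Your version is in fact more explicit, spelling out the leading-term/remainder decomposition and the weight bookkeeping that the paper leaves implicit.
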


\begin{proof}
The operators $\psi^2 \partial_\psi^2$, $\psi \partial_\psi$ and $\partial_\theta^2$ are bounded from $J_{1/2, \gamma}^{m, \sigma}(\Pi)$ to $J_{1/2, \gamma}^{m-2, \sigma}(\Pi)$. This follows immediately from definition of $J_{1/2, \gamma}^{m, \sigma}(\Pi)$ and boundedness of these maps in $K_{\lambda + \gamma}^m(\Pi)$. Thus $L$ is bounded in these spaces. Next, the trace map $u(\cdot, \cdot) \to u(1,\theta)$ is bounded from $J_{1/2,\gamma}^{m,\sigma}(\Pi)$ to $X_\sigma^{m-1/2}(\T)$. Using the Fourier series representation of the norm of $X_\sigma^{m-1/2}(\T)$, we get $\lVert R \rVert_{X_{\sigma}^{m-1/2}(\T)} = \lvert R \rvert$ and
$\lVert (p_x \mp ip_y)e^{\pm i \theta} \rVert_{X_{\sigma}^{m-1/2}(\T)} \leq C \left( \lvert p_x \rvert + \lvert p_x \rvert \right)$. Putting it all together gives the bound
\begin{equation*}
\lVert f \rVert_{J_{1/2, \gamma}^{m-2, \sigma}(\Pi)} + \lVert g \rVert_{X_\sigma^{m-1/2}(\T)} \leq \lvert R \rvert + \lvert p \rvert + \lVert u \rVert_{J_{1/2, \gamma}^{m, \sigma}(\Pi)}.
\end{equation*}
\end{proof}

Next, we tackle the homogeneous linear problem.

\begin{proposition}\label{homogeneous}\ \\
Let $1/2 \leq \gamma < 1$. Then the homogeneous problem obtained from \ref{linear problem} by setting $f=0$ is invertible and its solution has bound 
\begin{equation*}
\lvert R \rvert + \lvert p \rvert + \lVert u \rVert_{J_{1/2, \gamma}^{m,\sigma}(\Pi)} \leq C \lVert g \rVert_{X_\sigma^{m-1/2}(\T)}.
\end{equation*}
\end{proposition}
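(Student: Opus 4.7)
The plan is to solve the homogeneous problem explicitly by Fourier decomposition in $\theta$. Writing $u(\psi,\theta) = \sum_k \hat u_k(\psi) e^{ik\theta}$ and $g(\theta) = \sum_k \hat g_k e^{ik\theta}$, the PDE $Lu = 0$ decouples into the Euler-type ODEs
\begin{equation*}
\psi^2 \hat u_k'' + 2\psi\, \hat u_k' + \tfrac{1-k^2}{4} \hat u_k = 0,
\end{equation*}
whose general solutions are computed by inspection: for $|k| \geq 1$ we get $\hat u_k = A \psi^{(|k|-1)/2} + B \psi^{-(|k|+1)/2}$, while for $k=0$ the double root $\mu=-1/2$ gives $\hat u_0 = A\psi^{-1/2} + B \psi^{-1/2}\log\psi$.

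The first step is to identify which of these lie in $J_{1/2,\gamma}^{m,\sigma}(\Pi)$ under the decomposition $J_{1/2,\gamma}^m \cong H^m(\T) \oplus K_{1/2+\gamma}^m(\Pi)$. For $k=0$ and $|k|=1$ none of the explicit solutions survive: negative powers and the logarithm fail membership in $K_{1/2+\gamma}^m$ for $\gamma\geq 1/2$, and none of these powers equals $\psi^{1/2}$, so $\hat u_k \equiv 0$ for $|k| \leq 1$. For $|k|=2$ the admissible branch is $\psi^{1/2}$, entering as a leading-term coefficient. For $|k|\geq 3$ the admissible branch $\psi^{(|k|-1)/2}$ has exponent $\mu=(|k|-1)/2\geq 1$ and belongs to $K_{1/2+\gamma}^m$ precisely because $\gamma<1$; this is where the upper bound on $\gamma$ becomes sharp. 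The boundary condition at $\psi=1$ now reads $R + \tfrac{p_x-ip_y}{2}e^{i\theta} + \tfrac{p_x+ip_y}{2}e^{-i\theta} + u(1,\theta) = g(\theta)$, and since $\hat u_k(1)=0$ for $|k|\leq 1$, matching Fourier modes uniquely yields $R=\hat g_0$, $p_x=\hat g_1+\hat g_{-1}$, $p_y=i(\hat g_1-\hat g_{-1})$, and $\hat u_k(\psi) = \hat g_k\, \psi^{(|k|-1)/2}$ for $|k|\geq 2$. This furnishes both existence and uniqueness.

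The bounds then split across the components. The estimates $|R|+|p| \lesssim |\hat g_0|+|\hat g_{\pm 1}| \lesssim \lVert g\rVert_{X_\sigma^{m-1/2}}$ are immediate, and the leading-term contribution $v(\theta)\psi^{1/2}$ with $v=\hat g_2 e^{2i\theta}+\hat g_{-2}e^{-2i\theta}$ gives $\lVert v\rVert_{X_\sigma^m}^2 \lesssim \lVert g\rVert_{X_\sigma^{m-1/2}}^2$ by direct computation on two modes. The main effort is the remainder $w = \sum_{|k|\geq 3}\hat g_k\, \psi^{(|k|-1)/2}e^{ik\theta}$, for which a direct expansion of the Kondratev norm gives
\begin{equation*}
\lVert \psi^\mu e^{ik\theta}\rVert_{K_{1/2+\gamma}^m}^2 \lesssim \sum_{p+q\leq m}(\mu^{(p)})^2 |k|^{2q}\int_0^1 \psi^{2(\mu-\gamma-1/2)}\,d\psi \lesssim \frac{|k|^{2m}}{|k|-1-2\gamma},
\end{equation*}
with $\mu=(|k|-1)/2$. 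Combining with the exponential weight $e^{2\sigma|k|}$ from the partial-analytic norm, summation in $k$ produces $\lVert w\rVert_{K_{1/2+\gamma}^{m,\sigma}}^2 \lesssim \sum_{|k|\geq 3} |\hat g_k|^2 |k|^{2m-1}e^{2\sigma|k|} \sim \lVert g\rVert_{X_\sigma^{m-1/2}}^2$.

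The key subtlety, and what I expect to be the main step to execute carefully, is tracking the gain $(|k|-1-2\gamma)^{-1}\sim |k|^{-1}$ coming from the integral $\int_0^1 \psi^{2(\mu-\gamma-1/2)}d\psi$. This factor is exactly what is needed to compensate the half-derivative loss inherent in controlling a solution of a second-order elliptic problem by only the $H^{m-1/2}(\T)$ norm of its trace, and it is also what forces the restriction $\gamma<1$: once $\gamma\geq 1$, the mode $|k|=3$ already fails integrability and the corresponding component would not lie in $K_{1/2+\gamma}^m$ at all.
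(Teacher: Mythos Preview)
Your proposal is correct and follows essentially the same approach as the paper: Fourier decomposition in $\theta$, explicit solution of the resulting Cauchy--Euler ODEs, elimination of the branches incompatible with the $J_{1/2,\gamma}^{m,\sigma}$ structure (yielding $\hat u_k\equiv 0$ for $|k|\leq 1$, the leading $\psi^{1/2}$ piece for $|k|=2$, and the $K_{1/2+\gamma}^m$ remainder for $|k|\geq 3$), mode-by-mode matching of the boundary condition, and a direct computation of the Kondratev norm of the remainder where the integral $\int_0^1 \psi^{2(\mu-\gamma-1/2)}\,d\psi \sim |k|^{-1}$ supplies the half-derivative gain and forces $\gamma<1$. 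The paper's proof is organized identically, with the same identification of $R,p_x,p_y$ from the low modes and the same norm estimate on $\sum_{|k|\geq 3}\hat g_k\,\psi^{(|k|-1)/2}e^{ik\theta}$.
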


\begin{proof}
Expanding $u(\psi,\theta)$ in a Fourier series in $\theta$ gives the family of 2nd order Cauchy-Euler equations
\begin{equation*}
\left( \psi^2 D^2 + 2\psi D + \frac{1-k^2}{4} \right) \hat{u}_k(\psi) = 0.
\end{equation*}
Solving gives the general homogeneous solution
\begin{equation*}
u(\psi,\theta) = c_0 \psi^{-\frac{1}{2}} + d_0 \psi^{-\frac{1}{2}} \ln(\psi) + \sum_{k \neq 0} \left( c_k \psi^{\frac{-1+\lvert k \rvert}{2}} + d_k \psi^{\frac{-1- \lvert k \rvert}{2}} \right) e^{i k \theta}.
\end{equation*}
The space $J_{1/2, \gamma}^{m, \sigma}(\Pi)$ is the direct sum of a leading term of order $\psi^{1/2}$ and a remainder in $K_{1/2+\gamma}^{m,\sigma}(\Pi)$ of higher order terns. Thus we must discard all terms from the homogeneous solution whose order is less than $\psi^{1/2}$, namely, we must set $c_0$, $c_{\pm 1}$ and all $d_k$ terms to zero. This gives us the homogeneous solution
\begin{equation*}
u(\psi,\theta) = \sum_{\lvert k \rvert \geq 2} c_k \psi^{\frac{-1+\lvert k \rvert}{2}} e^{i k \theta}.
\end{equation*}
Observe that the 0th and 1st order modes are entirely absent from this solution. Their absence is accounted for by the extra degrees of freedom $R$ and $p$, provided in the solution. We split up the solution as follows:
\begin{equation*}
u(\psi, \theta) = \psi^{\frac{1}{2}}\left( c_2e^{2i\theta} + c_{-2}e^{-2i\theta} \right) + \sum_{\lvert k \rvert \geq 3} c_k \psi^{\frac{-1+\lvert k \rvert}{2}} e^{i k \theta}.
\end{equation*}
The first term is of order $\psi^{1/2}$, and its angular contribution is entire, thus certainly in $X_\sigma^m(\T)$. We should now guarantee that the remaining sum belongs to $K_{1/2+\gamma}^{m, \sigma}(\Pi)$ without having to discard any additional modes. We need thus ensure that the lowest order term of the remainder, that is $\psi$, is in $K_{1/2+\gamma}^{m, \sigma}(\Pi)$. This is satisfied when $\gamma < 1$. If we were to allow $\gamma \geq 1$, we would have to drop sufficient additional low order modes from the remainder term, which would render the boundary value problem non-surjective. Also, recall that $J_{1/2, \gamma}^{m, \sigma}(\Pi)$ is only well defined if $\gamma \geq 1/2$. Taking the Fourier series of $g(\theta)$, we can now match the boundary condition. We get:
\begin{equation*}
\begin{cases}
R = \hat{g}_0 \\ 
p_x = \hat{g}_1 + \hat{g}_{-1} \\ 
p_y = i ( \hat{g}_1 - \hat{g}_{-1} ) \\
c_k = \hat{g}_k \text { for } \lvert k \rvert \geq 2.
\end{cases}
\end{equation*}
Finally, we show the bound on this solution. Substituting the above and from definition of $J_{1/2, \gamma}^{m, \sigma}(\Pi)$, we get
\begin{align*}
\lvert R \rvert^2 + \lvert p_x \rvert^2 + \lvert p_y \rvert^2 + \lVert u(\psi,\theta) \rVert_{J_{1/2, \gamma}^{m, \sigma}}^2 & = \lvert \hat{g}_0 \rvert^2 + \lvert \hat{g}_1 + \hat{g}_{-1} \rvert^2 + \lvert \hat{g}_1 -  \hat{g}_{-1} \rvert^2 \\
 + \bigl\lVert \hat{g}_2e^{2i\theta} + \hat{g}_{-2}e^{-2i\theta} \bigr\rVert_{X_\sigma^{m}}^2 & + \Bigr\rVert \sum_{\lvert k \rvert \geq 3} \hat{g}_k \psi^{\frac{-1+\lvert k \rvert}{2}} e^{i k \theta} \Bigr\rVert_{K_{1/2+ \gamma}^{m, \sigma}}^2.
\end{align*}
The last term is bounded as follows:
\begin{align*}
\Bigl\lVert \sum_{\lvert k \rvert \geq 3} \hat{g}_k \psi^{\frac{-1+\lvert k \rvert}{2}} e^{i k \theta} \Bigr\rVert_{K_{1/2+ \gamma}^{m, \sigma}}^2 & = \sum_{p+q=0}^m \sum_{\lvert k \rvert \geq 3} (k^2)^q e^{2 \sigma \lvert k \rvert} \lvert \hat{g}_k \rvert^2 \bigl\lVert \psi^{p-\frac{1}{2}-\gamma} D^p \psi^{\frac{-1+\lvert k \rvert}{2}} \bigr\rVert_{L^2[0,1]}^2 \\ 
& = \sum_{p+q=0}^m \sum_{\lvert k \rvert \geq 3} (k^2)^q e^{2 \sigma \lvert k \rvert} \lvert \hat{g}_k \rvert^2 c_{p,k} \bigl\lVert \psi^{-1-\gamma + \frac{\lvert k \rvert}{2}} \bigr\rVert_{L^2[0,1]}^2 \\ 
& = \sum_{p+q=0}^m \sum_{\lvert k \rvert \geq 3} C_{p,k} \frac{(k^2)^q}{-1-2\gamma + \lvert k \rvert} e^{2 \sigma \lvert k \rvert} \lvert \hat{g}_k \rvert^2  \\ 
& \leq C \sum_{\lvert k \rvert \geq 3} (1+k^2)^{m-1/2} e^{2 \sigma \lvert k \rvert} \lvert \hat{g}_k \rvert^2
\end{align*}
where the third equality follows so long as $\psi^{-1-\gamma+ \frac{\lvert k \rvert}{2}} \in L^2[0,1]$, which is satisfied given $\gamma < 1$ and $\lvert k \rvert \geq 3$. We thus get the required bound
\begin{align*}
\lvert R \rvert^2 + \lvert p_x \rvert^2 + \lvert p_y \rvert^2 + \lVert u(\psi,\theta) \rVert_{J_{1/2, \gamma}^{m, \sigma}(\Pi)}^2 & \leq C \sum_k (1+k^2)^{m-1/2} e^{2 \sigma \lvert k \rvert} \lvert \hat{g}_k \rvert^2 \\
& = C \lVert g(\theta) \rVert_{X_\sigma^{m-1/2}(\T)}.
\end{align*}
\end{proof}

We now tackle the inhomogeneous problem. We write $u(\psi,\theta) = v(\theta)\psi^{1/2}+w(\psi,\theta)$ and $f(\psi,\theta) = \xi(\theta) \psi^{1/2}+ \eta(\psi,\theta)$, where $v \in X_\sigma^m(\T)$, $w \in K_{1/2+\gamma}^{m,\sigma}(\Pi)$, $\xi \in X_\sigma^{m-2}(\T)$ and $\eta \in K_{1/2+\gamma}^{m-2,\sigma}(\Pi)$. We can consider the linear problem on components $v(\theta)\psi^{1/2}$ and $w(\psi,\theta)$ separately, so long as we are careful to distribute the boundary condition modes carefully. Let us start with the first component. We have just seen that $v$ can only account for the $\lvert k \rvert = 2$ modes of the boundary condition. Let us then consider the problem
\begin{equation} \label{first component}
\begin{cases}
L\left(v(\theta)\psi^{1/2}\right) = \xi(\theta)\psi^{1/2} \\
\hat{v}_{\pm 2} = 0.
\end{cases}
\end{equation}

\begin{proposition}\label{first inhomogeneous}\ \\
The linear problem \ref{first component} is invertible between $v(\theta) \in X_\sigma^m(\T)$ and \\ $\xi(\theta) \in \widetilde{X}_\sigma^{m-2}(\T) = \left\{ \xi \in X_\sigma^{m-2}(\T) \colon \int_\T \xi(\theta)e^{\pm 2i\theta} = 0 \right\}$ and its solution has bound $ \lVert v \rVert_{X_\sigma^m(\T)} \leq C \lVert \xi \rVert_{X_\sigma^{m-2}(\T)} $.
\end{proposition}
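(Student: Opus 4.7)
The strategy is to observe that applying $L$ to a function of the separated form $v(\theta)\psi^{1/2}$ decouples the $\psi$-dependence completely and reduces the problem to a single second-order constant coefficient ODE in $\theta$ on the circle $\T$, which is then solved and estimated by Fourier series.

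First I would compute $L(v(\theta)\psi^{1/2})$ explicitly. Using the identity $(\psi^2\partial_\psi^2 + 2\psi\partial_\psi)\psi^\alpha = \alpha(\alpha+1)\psi^\alpha$, which yields the eigenvalue $3/4$ at $\alpha = 1/2$, the radial factor stays unchanged and a direct calculation gives
\begin{equation*}
L(v(\theta)\psi^{1/2}) = \psi^{1/2}\Bigl( v(\theta) + \tfrac{1}{4} v''(\theta) \Bigr).
\end{equation*}
Therefore equation \ref{first component} is equivalent to the ODE $v'' + 4 v = 4\xi$ on $\T$ together with the constraint $\hat{v}_{\pm 2}=0$.

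Next I would pass to Fourier series. The equation diagonalizes as $(4-k^2)\hat{v}_k = 4 \hat{\xi}_k$. For $k = \pm 2$ the left side vanishes, so solvability forces $\hat{\xi}_{\pm 2} = 0$; this is exactly the codimension-two condition built into the space $\widetilde{X}_\sigma^{m-2}(\T)$. The two-dimensional kernel $\{e^{\pm 2i\theta}\}$ is quotiented out by prescribing $\hat{v}_{\pm 2} = 0$. For every other $k$ the coefficient $4-k^2$ is nonzero, and we set
\begin{equation*}
\hat{v}_k = \frac{4}{4-k^2}\,\hat{\xi}_k, \qquad k \neq \pm 2 .
\end{equation*}
This defines a bounded inverse on the complementary modes.

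Finally I would establish the norm bound from the multiplier $m(k) = 4/(4-k^2)$. For $|k|$ large, $|m(k)| \leq C (1+k^2)^{-1}$, so $|m(k)|^2 (1+k^2)^m \leq C(1+k^2)^{m-2}$; for the finitely many small-frequency modes $k \in \{0, \pm 1\}$ the factor is bounded by a constant. Since $m(k)$ is independent of $\sigma$, the exponential weight $e^{2\sigma|k|}$ in the $X_\sigma$-norms passes through unchanged, and summing gives
\begin{equation*}
\lVert v \rVert_{X_\sigma^m(\T)}^2 = \sum_{k \neq \pm 2} \frac{16\,(1+k^2)^m}{(4-k^2)^2} e^{2\sigma|k|}|\hat{\xi}_k|^2 \leq C \lVert \xi \rVert_{X_\sigma^{m-2}(\T)}^2 .
\end{equation*}
This is essentially routine: the genuine content of the proposition is the identification of the two obstructing modes $k=\pm 2$, which arise because $e^{\pm 2i\theta}\psi^{1/2}$ lies in the kernel of $L$ acting on separated functions of leading order $\psi^{1/2}$, and whose removal is the reason for introducing the tilded space $\widetilde{J}_{0,\gamma}^{m-2,\sigma}$ as the target.
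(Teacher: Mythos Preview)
Your proof is correct and follows essentially the same approach as the paper: compute $L(v\psi^{1/2}) = \psi^{1/2}(v + \tfrac14 v'')$, diagonalize by Fourier series to $(1-k^2/4)\hat v_k = \hat\xi_k$ (your normalization multiplies through by $4$, which is immaterial), identify the obstructing modes $k=\pm 2$, and bound the inverse multiplier by $C(1+k^2)^{-1}$. The paper's argument is identical in structure and content.
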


\begin{proof}
A direct computation shows $L(v\psi^{1/2}) = \psi^{1/2}\left(v(\theta) + \frac{D^2v(\theta)}{4}\right)$. Taking the Fourier series, we get the family of algebraic equations $\left(1-k^2/4 \right) \hat{v}_k = \hat{\xi}_k$. For $\lvert k \rvert = 2$, the left side vanishes. We get the solution
\begin{equation*}
\begin{cases}
\hat{v}_k = (1-k^2/4)^{-1}\hat{\xi}_k, \text{ for } \lvert k \rvert \neq 2,\\
\hat{v}_{\pm 2} = 0
\end{cases}
\end{equation*}
and deduce $L$ is not surjective onto $X_\sigma^{m-2}(\T)$ but rather onto $\widetilde{X}_\sigma^{m-2}(\T)$. To establish the boundedness of this inverse, we have
\begin{align*}
\lVert v \rVert_{X_\sigma^m}^2 & = \sum_k (1+k^2)^m e^{2\sigma \lvert k \rvert} \lvert \hat{v}_k \rvert^2 \\ 
& = \sum_{\lvert k \rvert \neq 2} (1+k^2)^m e^{2\sigma \lvert k \rvert} \frac{\lvert \hat{\xi}_k \rvert^2}{(1-k^2/4)^2} \\ 
& \leq C\sum_k (1+k^2)^{m-2} e^{2\sigma \lvert k \rvert} \lvert \hat{\xi}_k \rvert^2 \\
& \leq C \lVert \xi \rVert_{X_\sigma^{m-2}}^2.
\end{align*}
\end{proof}

It remains to solve the inhomogeneous problem on the second component. We must be careful to distribute the boundary conditions correctly. We have seen that the $k=0$ mode of the boundary condition is controlled for by $R$. Next, the $ \lvert k \rvert = 1$ modes are controlled for by $p$. Finally, the $ \lvert k \rvert = 2$ modes are controlled for by the first component of $u$. Thus we should expect that only the remaining $\lvert k \rvert \geq 3$ modes are controlled by the second component. The inhomogeneous problem then is: given $\eta(\psi,\theta) \in K_{1/2 + \gamma}^{m-2,\sigma}(\Pi)$, solve the following equation for $w(\psi,\theta) \in K_{1/2+ \gamma}^{m,\sigma}(\Pi)$:
\begin{equation} \label{second component}
\begin{cases}
Lw(\psi,\theta) = \eta(\psi,\theta) \\
\hat{w}_k(1) = \int_\T w(1,\theta) e^{- i k \theta} \dd{\theta}  = 0 \text { for } \lvert k \rvert \geq 3.
\end{cases}
\end{equation}

\begin{proposition}\label{second inhomogeneous} \ \\
Let $1/2 < \gamma < 1$. The inhomogeneous problem \ref{second component} is invertible with bound 
\begin{equation*}
\lVert w \rVert_{K_{1/2+\gamma}^{m,\sigma}(\Pi)} \leq  C\lVert \eta \rVert_{K_{1/2+\gamma}^{m-2,\sigma}(\Pi)}.
\end{equation*}
\end{proposition}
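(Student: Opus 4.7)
The plan is to decouple the problem into a family of one-dimensional Cauchy--Euler boundary value problems, solve each by factorization and Hardy, and sum up with the analytic weight. Expand $w$ and $\eta$ in Fourier series in $\theta$: since $L$ acts diagonally (each $\partial_\theta$ pulls out a factor of $ik$) and the boundary condition $\hat{w}_k(1)=0$ is imposed separately for $|k|\geq 3$, the problem reduces mode-wise to
\begin{equation*}
L_k \hat{w}_k := \Bigl(\psi^2 D^2 + 2\psi D + \tfrac{1-k^2}{4}\Bigr)\hat{w}_k = \hat{\eta}_k, \qquad \hat{w}_k(1) = 0.
\end{equation*}
With $P = \psi\partial_\psi$ one has the factorization $L_k = P^2 + P + (1-k^2)/4 = (P - r_-)(P - r_+)$, where $r_\pm = (-1 \pm |k|)/2$ are the indicial roots; for $|k|\geq 3$ these satisfy $r_+ \geq 1$ and $r_- \leq -2$.

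Set $u_k = (P-r_+)\hat{w}_k$, reducing to two first-order problems. The equation $(P-r_-)u_k = \hat{\eta}_k$ has a unique weighted-$L^2$ solution near $\psi=0$ given by
\begin{equation*}
u_k(\psi) = \psi^{r_-}\int_0^\psi t^{-r_- - 1}\hat{\eta}_k(t)\,dt.
\end{equation*}
Rewriting $\psi^{-1/2-\gamma}u_k = \psi^{r_- - 1/2-\gamma}\int_0^\psi t^{-r_- - 1/2 + \gamma}(t^{-1/2-\gamma}\hat\eta_k)\,dt$ and applying Hardy's inequality (Case~1 with $\alpha = -|k|/2 - \gamma < 1/2$) yields
\begin{equation*}
\lVert\psi^{-1/2-\gamma}u_k\rVert_{L^2} \leq \tfrac{1}{(|k|+1)/2 + \gamma}\lVert\psi^{-1/2-\gamma}\hat{\eta}_k\rVert_{L^2},
\end{equation*}
with constant uniformly bounded (in fact $O(1/|k|)$).

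Next, solve $(P-r_+)\hat{w}_k = u_k$ with the boundary condition $\hat{w}_k(1)=0$; integrating from $1$ gives
\begin{equation*}
\hat{w}_k(\psi) = -\psi^{r_+}\int_\psi^1 t^{-r_+ - 1}u_k(t)\,dt.
\end{equation*}
Applying Hardy (Case~2) with $\alpha = |k|/2 - \gamma$, the requirement $\alpha > 1/2$ becomes $|k| > 1+2\gamma$, which for $|k|\geq 3$ holds \emph{precisely because} $\gamma < 1$; the resulting Hardy constant $1/(|k|/2 - \gamma - 1/2)$ is uniformly bounded in $|k|\geq 3$ (worst case $1/(1-\gamma)$ at $|k|=3$) and decays like $1/|k|$ at infinity. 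Composing both estimates gives $\lVert\psi^{-1/2-\gamma}\hat{w}_k\rVert_{L^2} \lesssim |k|^{-2}\lVert\psi^{-1/2-\gamma}\hat{\eta}_k\rVert_{L^2}$.

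Higher regularity in $\psi$ follows by bootstrapping from the ODE: writing $\psi^2 D^2 \hat{w}_k = \hat{\eta}_k - 2\psi D\hat{w}_k - (1-k^2)\hat{w}_k/4$ controls $\lVert \psi^{p-1/2-\gamma}D^p \hat{w}_k\rVert_{L^2}$ for $p\geq 2$ in terms of the same norm at $p-2$ applied to $\hat{\eta}_k$ and lower-order norms of $\hat{w}_k$, all with uniform-in-$k$ constants. The $1/|k|^2$ gain from the two Hardy applications exactly matches the gap between $k^{2m}$ weights (from $\theta$-derivatives on $\hat{w}_k$) and $k^{2(m-2)}$ weights (on $\hat{\eta}_k$), so multiplying by $(1+k^2)^{m-p}e^{2\sigma|k|}$ and summing over $|k|\geq 3$ yields the claimed bound. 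The main obstacle is verifying the uniformity of constants in $k$ and identifying $\gamma<1$ as the sharp condition making Hardy Case~2 available at the critical mode $|k|=3$; once this is in hand, the analytic weight $e^{\sigma|k|}$ causes no trouble because the ODE's inversion constants decay polynomially in $k$.
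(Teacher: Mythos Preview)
Your approach---Fourier decoupling, factoring $L_k$ through the Euler operator $P=\psi\partial_\psi$, and inverting each first-order factor via Hardy---is exactly the paper's strategy, and for $|k|\geq 3$ your argument is essentially correct. The gap is that you have silently dropped the low modes $|k|\leq 2$. The data $\eta\in K_{1/2+\gamma}^{m-2,\sigma}(\Pi)$ has nonzero Fourier components at all $k$, but the boundary condition $\hat w_k(1)=0$ is imposed \emph{only} for $|k|\geq 3$; for $|k|\leq 2$ no boundary condition is available, and your formula $\hat w_k(\psi)=-\psi^{r_+}\int_\psi^1 t^{-r_+-1}u_k\,dt$ cannot be used there (indeed, Hardy Case~2 with $\alpha=|k|/2-\gamma$ fails since $\alpha<1/2$ for those $k$).

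For $|k|\leq 2$ the inverse of $L_k^-$ must instead integrate from $0$, the constant of integration being forced to zero because the homogeneous solution $\psi^{r_+}$ with $r_+\leq 1/2$ does not lie in $K_{1/2+\gamma}$. Hardy Case~1 then applies with $\alpha=|k|/2-\gamma$, and the requirement $\alpha<1/2$ at the worst mode $|k|=2$ reads $1-\gamma<1/2$, i.e.\ $\gamma>1/2$. This is precisely where the strict lower bound in the hypothesis enters; your write-up identifies only the role of $\gamma<1$ (at $|k|=3$) and never uses $\gamma>1/2$. Once you add this low-mode case, the rest of your argument (including the bootstrapping for higher $\psi$-derivatives, which the paper carries out via an explicit recursion formula for $D^p\hat w_k$) goes through.
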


\begin{proof}
Expanding in a Fourier series gives the family of ODEs
\begin{equation*}
\begin{cases}
L_k \hat{w}_k(\psi) = \hat{\eta}_k(\psi) \\
\hat{w}_k(1) = 0 \text { for } \lvert k \rvert \geq 3,
\end{cases}
\end{equation*}
where 
\begin{equation*}
L_k = \psi^2 D^2 + 2 \psi D + \frac{1-k^2}{4} I.
\end{equation*}
We can factor $L_k$ into the product of two 1st order operators as follows:
\begin{equation*}
L_k = \left(\psi D + \frac{1+ \lvert k \rvert}{2} I \right) \left(\psi D + \frac{1- \lvert k \rvert}{2} I \right) = L_k^{+} \cdot L_k^{-}.
\end{equation*}
Next we can rewrite these operators as
\begin{align*}
L_k^{+} = \psi D + \frac{1+ \lvert k \rvert}{2} I = \psi^{1-\frac{1+ \lvert k \rvert}{2}} D \left( \psi^{\frac{1+ \lvert k \rvert}{2}} I \right), \\
L_k^{-} = \psi D + \frac{1- \lvert k \rvert}{2} I = \psi^{1-\frac{1- \lvert k \rvert}{2}} D \left( \psi^{\frac{1- \lvert k \rvert}{2}} I \right).
\end{align*}
We can accordingly define a factorization of $L$ by
\begin{equation*}
L = L_{+} \cdot L_{-},
\end{equation*}
where
\begin{align*}
L_{\pm} w(\psi,\theta) = \sum_k L_k^{\pm} \hat{w}_k(\psi) e^{i k \theta}.
\end{align*}
Notice each of these operators is bounded from $K_{1/2 + \gamma}^{m,\sigma}(\Pi)$ to $K_{1/2 + \gamma}^{m-1,\sigma}(\Pi)$. Using the boundary conditions, we can explicitly invert $L_k^{\pm}$ to construct $ L_{\pm}^{-1}$ and show that it is bounded from $K_{1/2+\gamma}^{m-1, \sigma}(\Pi)$  to $K_{1/2+\gamma}^{m, \sigma}(\Pi)$. Since $L^{-1}$ is the composition of $L_{-}^{-1}$ and $L_{+}^{-1}$, it follows that it is bounded from $K_{1/2+\gamma}^{m-2 ,\sigma}(\Pi)$ to $K_{1/2+\gamma}^{m ,\sigma}(\Pi)$.

Let us proceed now with inverting $L_k^{+}$ and $L_k^{-}$. They take the general form of operator
\begin{equation*}
A_{\lambda_k} = \psi D + \lambda_k I = \psi^{1-\lambda_k} D \left( \psi^{\lambda_k} I \right),
\end{equation*}
where in our case $\lambda_k = \frac{1 \pm \lvert k \rvert}{2}$. Writing the equation 
\begin{equation*}
A_{\lambda_k} w_k(\psi) = \psi^{1-\lambda_k} D \left( \psi^{\lambda_k} w_k(\psi) \right) = \eta_k(\psi),
\end{equation*}
we can solve by direct integration to get
\begin{equation*}
w_k(\psi) = A_{\lambda_k}^{-1}\eta_k(\psi) = \psi^{-\lambda_k} \int_0^{\psi} t^{\lambda_k-1} \eta_k(t) \dd{t} + c\psi^{-\lambda_k},
\end{equation*}
or equivalently
\begin{equation*}
w_k(\psi) = A_{\lambda_k}^{-1}\eta_k(\psi) = -\psi^{-\lambda_k} \int_{\psi}^{1} t^{\lambda_k-1} \eta_k(t) \dd{t} + d\psi^{-\lambda_k}.
\end{equation*}
Next, we should check for which $\lambda_k = \frac{1 \pm \lvert k \rvert}{2}$ does the $\psi^{-\lambda_k}$ term belong to $K_{1/2+\gamma}^{m,\sigma}$, keeping in mind we have the restriction $ 1/2 \leq \gamma < 1$ from the homogeneous problem. This occurs only for $L_k^{-}$ when $\lvert k \rvert \geq 3$, that is, when $\lambda_k = \frac{1 - \lvert k \rvert}{2}$ and $\lvert k \rvert \geq 3$. In this case, we need the boundary condition to find the inverse. Otherwise, we must set constants $c$ or $d$ to zero and no boundary condition is available. Let us then write the inverses as follows.
\begin{equation} \label{minus inverse}
\left( L_k^{-} \right)^{-1} \hat{\eta}_k(\psi) = 
\begin{cases}
\psi^{-\frac{1 - \lvert k \rvert}{2}} \int_0^\psi t^{\frac{-1-\lvert k \rvert}{2}} \hat{\eta}_k(t) \dd{t} \text { \quad for } \lvert k \rvert <  3 \\
-\psi^{-\frac{1 - \lvert k \rvert}{2}} \int_{\psi}^1 t^{\frac{-1-\lvert k \rvert}{2}} \hat{\eta}_k(t) \dd{t} \text { \quad for } \lvert k \rvert \geq 3
\end{cases}
\end{equation}
\begin{equation}\label{plus inverse}
\left( L_k^{+} \right)^{-1} \hat{\eta}_k(\psi) = \psi^{-\frac{1 + \lvert k \rvert}{2}} \int_0^\psi t^{\frac{-1+\lvert k \rvert}{2}} \hat{\eta}_k(t) \dd{t} \text { \quad for all } k. 
\end{equation}
Notice that the inverses above are operators that take the weighted average of a Fourier mode of $\eta$ from $0$ to $\psi$ or from $\psi$ to $1$. The choice we made is in anticipation of using the Hardy inequality \ref{Hardy}. Also notice the boundary conditions available were each used precisely once. We have thus constructed explicitly the inverse of $L$, which can be written as
\begin{equation*}
L^{-1} \eta(\psi,\theta) = \sum_k \left(L_k^{-} \right)^{-1} \cdot \left( L_k^{+}\right)^{-1} \hat{\eta}_k(\psi) e^{i k \theta}
\end{equation*}
and can be described as the Fourier series of the composition of two varying weighted averages of the Fourier modes of $\eta$.

Now, we must demonstrate the boundedness from $K_{1/2+ \gamma}^{m-1, \sigma}(\Pi)$ to $K_{1/2+ \gamma}^{m, \sigma}(\Pi)$ of operators
\begin{equation*}
L_{\pm}^{-1} \eta(\psi,\theta) = \sum_k \left( L_k^{\pm} \right)^{-1} \hat{\eta}_k(\psi) e^{i k \theta}.
\end{equation*}
That is, we are looking to establish the bound
\begin{equation*}
 \lVert L_{\pm}^{-1} \eta(\psi,\theta) \rVert_{K_{1/2+\gamma}^{m, \sigma}} \leq C  \lVert \eta(\psi,\theta) \rVert_{K_{1/2+\gamma}^{m-1, \sigma}}
 \end{equation*}
 given norm
\begin{equation}\label{norm}
\lVert w(\psi, \theta) \rVert_{K_{1/2+\gamma}^{m,\sigma}(\Pi)}^2 = \sum_k \sum_{p+q=0}^m (k^2)^q e^{2 \sigma \lvert k \rvert} \bigl\lVert \psi^{p-1/2-\gamma} D^p \hat{w}_k(\psi) \bigr\rVert_{L^2[0,1]}^2.
\end{equation}
Having already constructed $\hat{w}_k(\psi)$ in \ref{minus inverse} and \ref{plus inverse}, we should next find an expression for its derivatives $D^p \hat{w}_k(\psi)$. Let us again write
\begin{equation*}
A_{\lambda_k} \hat{w}_k(\psi) = \left( \psi D + \lambda_k I \right) \hat{w}_k(\psi) = \hat{\eta}_k(\psi).
\end{equation*}
Rearranging, gives
\begin{equation*}
D \hat{w}_k(\psi) = \frac{1}{\psi} \hat{\eta}_k(\psi) - \frac{\lambda_k}{\psi} \hat{w}_k(\psi).
\end{equation*}
Continued differentiation and substitution yields the expression
\begin{equation}
D^p \hat{w}_k = \sum_{n=1}^p (-1)^{n+1} \frac{(\lambda_k+p-1)!}{(\lambda_k+p-n)!} \cdot \frac{1}{\psi^n} \cdot D^{p-n} \hat{\eta}_k + (-1)^p (\lambda_k+p-1)! \frac{\hat{w}_k}{\psi^p},
\end{equation}
where we use the factorial sign $!$ in a modified sense to mean
\begin{equation} \label{factorial}
\begin{cases}
(\lambda + l)! =  (\lambda + l)(\lambda + l-1) \cdot \cdot \cdot (\lambda+1)(\lambda) \text { \quad for } l \in \N_0, \\
(\lambda-1)! = 1.
\end{cases}
\end{equation}
Substituting this expression into \ref{norm} and using triangle inequality for the summations, we get
\begin{align*}
& \lVert w(\psi, \theta) \rVert_{K_{1/2+\gamma}^{m,\sigma}}^2 \\ & \leq C \sum_k \sum_{\substack{p+q=1\\ p \geq 1}}^m (k^2)^q e^{2 \sigma \lvert k \rvert} \sum_{n=1}^p \left( \frac{(\lambda_k+p-1)!}{(\lambda_k+p-n)!} \right)^2 \bigl\lVert \psi^{p-n-\frac{1}{2}-\gamma} D^{p-n} \hat{\eta}_k(\psi) \bigr\rVert_{L^2[0,1]}^2 \\
& + C \sum_k \sum_{p+q=0}^m (k^2)^q e^{2 \sigma \lvert k \rvert} \bigl( (\lambda_k + p -1)! \bigr)^{2} \bigl\lVert \psi^{-\frac{1}{2}-\gamma} \hat{w}_k(\psi) \bigr\rVert_{L^2[0,1]}^2.
\end{align*}
From our modified factorial \ref{factorial} and that $\lambda_k = \frac{1\pm \lvert k \rvert}{2}$, we have that
\begin{equation*}
(\lambda_k+p-1)! \sim \lvert k \rvert^{p} \text{  as } \lvert k \rvert \to \pm \infty
\end{equation*}
and
\begin{equation*}
\frac{(\lambda_k+p-1)!}{(\lambda_k+p-n)!} \sim \lvert k \rvert^{n-1} \text{ as } \lvert k \rvert \to \pm \infty.
\end{equation*}
Applying this to the inequality above gives
\begin{align*}
& \lVert w(\psi, \theta) \rVert_{K_{1/2+\gamma}^{m,\sigma}}^2 \\ & \leq C \sum_k \sum_{\substack{p+q=1\\ p \geq 1}}^m \sum_{n=1}^p (k^2)^{n-1+q} e^{2 \sigma \lvert k \rvert} \bigl\lVert \psi^{p-n-\frac{1}{2}-\gamma} D^{p-n} \hat{\eta}_k(\psi) \bigr\rVert_{L^2[0,1]}^2 \\
& + C \sum_k \sum_{p+q=0}^m (k^2)^{p+q} e^{2 \sigma \lvert k \rvert} \bigl\lVert \psi^{-\frac{1}{2}-\gamma} \hat{w}_k(\psi) \bigr\rVert_{L^2[0,1]}^2.
\end{align*}
Let us split the right hand side into two terms, with
\begin{equation*}
A = \sum_k \sum_{\substack{p+q=1\\ p \geq 1}}^m \sum_{n=1}^p (k^2)^{n-1+q} e^{2 \sigma \lvert k \rvert} \bigl\lVert \psi^{p-n-\frac{1}{2}-\gamma} D^{p-n} \hat{\eta}_k(\psi) \bigr\rVert_{L^2[0,1]}^2
\end{equation*}
and
\begin{equation*}
B = \sum_k \sum_{p+q=0}^m (k^2)^{p+q} e^{2 \sigma \lvert k \rvert} \bigl\lVert \psi^{-\frac{1}{2}-\gamma} \hat{w}_k(\psi) \bigr\rVert_{L^2[0,1]}^2.
\end{equation*}
In term $A$, setting $q' = n-1+q$ and $p' = p-n$, then $p'+q' = p+q-1$ ranges from $0$ to $m-1$. We immediately get
\begin{equation*}
A \leq C  \sum_k \sum_{p'+q'=0}^{m-1} (k^2)^{q'} e^{2 \sigma \lvert k \rvert} \bigl\lVert \psi^{p'-\frac{1}{2}-\gamma} D^{p'} \hat{\eta}_k(\psi) \bigr\rVert_{L^2[0,1]}^2 = C \lVert \eta(\psi,\theta) \rVert_{K_{1/2+\gamma}^{m-1, \sigma}}.
\end{equation*}

Now we turn to bounding $B$, starting with the factor $\bigl\lVert \psi^{-\frac{1}{2}-\gamma} \hat{w}_k(\psi) \bigr\rVert_{L^2[0,1]}$ in the summation. To achieve this we will use now the Hardy inequality. Recall, $\hat{w}_k(\psi)$ is given by \ref{plus inverse} and \ref{minus inverse}, depending on if we are inverting $L_k^{+}$ or $L_k^{-}$. This gives us three separate cases of expressions for $\hat{w}_k(\psi)$. 

Let us start with the case when $\hat{w}_k(\psi) = \left( L_k^{+} \right)^{-1} \hat{\eta}_k(\psi)$, and $k$ is arbitrary. In this case,
\begin{equation*}
\hat{w}_k(\psi) = \psi^{-\frac{1+\lvert k \rvert}{2}}\int_0^\psi t^{\frac{-1+\lvert k \rvert}{2}} \hat{\eta}_k(t) \dd{t}.
\end{equation*}
Setting $\alpha-1 = - \frac{1}{2} - \gamma - \frac{1 + \lvert k \rvert}{2}$, we get $\alpha = -\gamma - \frac{\lvert k \rvert}{2} \leq -\gamma < 1/2$ for all $k$, as long as $\gamma > -1/2$. Also set $t^{\frac{-1+\lvert k \rvert}{2}} \hat{\eta}_k(t) = t^{-\alpha}\zeta_k(t)$. Applying Hardy's inequality for $\alpha < 1/2$, we get
\begin{align*}
\bigl\lVert \psi^{-\frac{1}{2}-\gamma} \hat{w}_k(\psi) \bigr\rVert_{L^2[0,1]} & = \Bigl\lVert \psi^{-\frac{1}{2}-\gamma-\frac{1+\lvert k \rvert}{2}} \int_0^\psi t^{\frac{-1+\lvert k \rvert}{2}} \hat{\eta}_k(t) \dd{t} \Bigr\rVert_{L^2[0,1]} \\
& =  \Bigl\lVert \psi^{\alpha-1} \int_0^\psi t^{-\alpha} \zeta_k(t) \dd{t} \Bigr\rVert_{L^2[0,1]} \\
& \leq \frac{1}{\frac{1}{2}-\alpha} \left\lVert \zeta_k(\psi) \right\rVert_{L^2[0,1]} \\
& \leq \frac{1}{\frac{1}{2} + \gamma + \frac{\lvert k\rvert}{2}} \bigl\lVert \psi^{-\gamma-\frac{1}{2}} \hat{\eta}_k(\psi) \bigr\rVert_{L^2[0,1]}.
\end{align*}

Next, we consider the case when $\hat{w}_k(\psi) = \left( L_k^{-} \right)^{-1} \hat{\eta}_k(\psi)$ and $ \lvert k \rvert < 3$. In this case,
\begin{equation*}
\hat{w}_k(\psi) = \psi^{-\frac{1-\lvert k \rvert}{2}}\int_0^\psi t^{\frac{-1-\lvert k \rvert}{2}} \hat{\eta}_k(t) \dd{t}.
\end{equation*}
Setting $\alpha-1 = - \frac{1}{2} - \gamma - \frac{1 - \lvert k \rvert}{2}$, we get $\alpha = -\gamma + \frac{\lvert k \rvert}{2} < 1/2$ for $\lvert k \rvert \leq 2$, only as long as $\gamma > 1/2$. Also set $t^{\frac{-1-\lvert k \rvert}{2}} \hat{\eta}_k(t) = t^{-\alpha}\zeta_k(t)$. Applying Hardy's inequality for $\alpha < 1/2$, we get
\begin{align*}
\bigl\lVert \psi^{-\frac{1}{2}-\gamma} \hat{w}_k(\psi) \bigr\rVert_{L^2[0,1]} & = \Bigl\lVert \psi^{-\frac{1}{2}-\gamma-\frac{1- \lvert k \rvert}{2}} \int_0^\psi t^{\frac{-1-\lvert k \rvert}{2}} \hat{\eta}_k(t) \dd{t} \Bigr\rVert_{L^2[0,1]} \\
& =  \Bigl\lVert \psi^{\alpha-1} \int_0^\psi t^{-\alpha} \zeta_k(t) \dd{t} \Bigr\rVert_{L^2[0,1]} \\
& \leq \frac{1}{\frac{1}{2}-\alpha} \left\lVert \zeta_k(\psi) \right\rVert_{L^2[0,1]} \\
& \leq \frac{1}{\frac{1}{2} + \gamma - \frac{\lvert k\rvert}{2}} \bigl\lVert \psi^{-\gamma-\frac{1}{2}} \hat{\eta}_k(\psi) \bigr\rVert_{L^2[0,1]}.
\end{align*}

The third case occurs when $\hat{w}_k(\psi) = \left( L_k^{-} \right)^{-1} \hat{\eta}_k(\psi)$ and $ \lvert k \rvert \geq3$. In this case,
\begin{equation*}
\hat{w}_k(\psi) = -\psi^{-\frac{1-\lvert k \rvert}{2}}\int_\psi^1 t^{\frac{-1-\lvert k \rvert}{2}} \hat{\eta}_k(t) \dd{t}.
\end{equation*}
Setting $\alpha-1 = - \frac{1}{2} - \gamma - \frac{1 - \lvert k \rvert}{2}$, we get $\alpha = -\gamma + \frac{\lvert k \rvert}{2} > 1/2$ for $\lvert k \rvert \geq 3$,  as long as $\gamma < 1$. Also set $t^{\frac{-1-\lvert k \rvert}{2}} \hat{\eta}_k(t) = t^{-\alpha}\zeta_k(t)$. Applying Hardy's inequality now for $\alpha > 1/2$, we get
\begin{align*}
\bigl\lVert \psi^{-\frac{1}{2}-\gamma} \hat{w}_k(\psi) \bigr\rVert_{L^2[0,1]} & = \Bigl\lVert \psi^{-\frac{1}{2}-\gamma-\frac{1- \lvert k \rvert}{2}} \int_\psi^1 t^{\frac{-1-\lvert k \rvert}{2}} \hat{\eta}_k(t) \dd{t} \Bigr\rVert_{L^2[0,1]} \\
& =  \Bigl\lVert \psi^{\alpha-1} \int_0^\psi t^{-\alpha} \zeta_k(t) \dd{t} \Bigl\rVert_{L^2[0,1]} \\
& \leq \frac{1}{\alpha - \frac{1}{2}} \left\lVert \zeta_k(\psi) \right\rVert_{L^2[0,1]} \\
& \leq \frac{1}{-\frac{1}{2} - \gamma + \frac{\lvert k\rvert}{2}} \bigl\lVert \psi^{-\gamma-\frac{1}{2}} \hat{\eta}_k(\psi) \bigr\rVert_{L^2[0,1]}.
\end{align*}

We have thus found that in each of the three cases, we get the bound
\begin{equation*}
\bigl\lVert \psi^{-\frac{1}{2}-\gamma} \hat{w}_k(\psi) \bigr\rVert_{L^2[0,1]} \leq C_{\gamma, k}  \bigl\lVert \psi^{-\frac{1}{2} - \gamma} \hat{\eta}_k(\psi) \bigr\rVert_{L^2[0,1]}.
\end{equation*}
The crucial detail is the additional strict restriction to $\gamma > 1/2$, which avoids the critical case of the Hardy inequality when $\alpha = 1/2$. This ensures the constant $C_{\gamma, k}$ above is bounded for all $k$. Note this constant decays like $2/{\lvert k \rvert}$.

Returning to $B$ and applying the above, we get the bound
\begin{align*}
B & = \sum_k \sum_{p+q=0}^m (k^2)^{p+q} e^{2 \sigma \lvert k \rvert} \bigl\lVert \psi^{-\frac{1}{2}-\gamma} \hat{w}_k(\psi) \bigr\rVert_{L^2[0,1]}^2 \\
& \leq C \sum_k \sum_{q'=0}^{m-1} (k^2)^{q'} e^{2 \sigma \lvert k \rvert} \bigl\lVert \psi^{-\frac{1}{2}-\gamma} \hat{\eta}_k(\psi) \bigr\rVert_{L^2[0,1]}^2 \\
& \leq C \left\lVert \eta(\psi,\theta) \right\rVert_{K_{1/2 + \gamma}^{m-1, \sigma}(\Pi)}.
\end{align*}

From $A$ and $B$, we get
\begin{equation*}
\left\lVert L_{\pm}^{-1} \eta(\psi,\theta) \right\rVert_{K_{1/2+\gamma}^{m ,\sigma}} \leq C \left\lVert \eta(\psi,\theta) \right\rVert_{K_{1/2+\gamma}^{m-1 ,\sigma}}
\end{equation*}
and thus
\begin{equation*}
 \lVert w(\psi,\theta) \rVert_{K_{1/2+\gamma}^{m ,\sigma}} = \left\lVert L_{-}^{-1} \cdot L_{+}^{-1} \eta(\psi,\theta) \right\rVert_{K_{1/2+\gamma}^{m ,\sigma}} \leq C \left\lVert \eta(\psi,\theta) \right\rVert_{K_{1/2+\gamma}^{m-2 ,\sigma}}.
\end{equation*}
\end{proof}

We now have the ingredients to prove the main result of this section.
\begin{theorem}
For $1/2 < \gamma < 1$, the linear problem \ref{true linear problem} defines an isomorphism 
\begin{equation*}
\C^3 \times J_{1/2, \gamma}^{m, \sigma}(\Pi) \to \widetilde{J}_{0, \gamma}^{m-2, \sigma}(\Pi) \times X_{\sigma}^{m-1/2}(\T).
\end{equation*}
\end{theorem}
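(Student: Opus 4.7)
The plan is to assemble the four preceding propositions into a direct construction of the inverse. First, multiplication by $\psi^{1/2}$ is an isomorphism from $\widetilde{J}_{0,\gamma}^{m-2,\sigma}(\Pi)$ onto $\widetilde{J}_{1/2,\gamma}^{m-2,\sigma}(\Pi)$, so it suffices to exhibit an isomorphism
\begin{equation*}
\C^3 \times J_{1/2,\gamma}^{m,\sigma}(\Pi) \to \widetilde{J}_{1/2,\gamma}^{m-2,\sigma}(\Pi) \times X_\sigma^{m-1/2}(\T)
\end{equation*}
for the reduced problem \ref{linear problem}. Boundedness is precisely Proposition \ref{boundedness}.

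To build the inverse, given $(f,g)$ in the target space, I decompose $f$ through the direct sum structure of $\widetilde{J}_{1/2,\gamma}^{m-2,\sigma}(\Pi)$ as $f = \xi(\theta)\psi^{1/2} + \eta(\psi,\theta)$ with $\xi \in \widetilde{X}_\sigma^{m-2}(\T)$ and $\eta \in K_{1/2+\gamma}^{m-2,\sigma}(\Pi)$. Proposition \ref{first inhomogeneous} then yields $v \in X_\sigma^m(\T)$ with $\hat{v}_{\pm 2}=0$ solving $L(v\psi^{1/2}) = \xi\psi^{1/2}$, and Proposition \ref{second inhomogeneous} yields $w \in K_{1/2+\gamma}^{m,\sigma}(\Pi)$ with $\hat{w}_k(1)=0$ for $\lvert k \rvert \geq 3$ solving $Lw=\eta$. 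Setting $u_{\text{inh}} = v(\theta)\psi^{1/2} + w(\psi,\theta) \in J_{1/2,\gamma}^{m,\sigma}(\Pi)$ gives a particular solution of $Lu_{\text{inh}} = f$.

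To match the boundary condition, I define $\tilde{g}(\theta) = g(\theta) - u_{\text{inh}}(1,\theta)$, which lies in $X_\sigma^{m-1/2}(\T)$ by the bounded trace map of Proposition \ref{Kondratev properties}. Feeding $\tilde{g}$ into Proposition \ref{homogeneous} produces $R \in \C$, $p \in \C^2$, and $u_{\text{hom}} \in J_{1/2,\gamma}^{m,\sigma}(\Pi)$ solving the homogeneous problem with boundary data $\tilde{g}$. The full solution is then $u = u_{\text{inh}} + u_{\text{hom}}$ together with $R$, $p$. The mode bookkeeping is consistent: the modes $k=0$ and $\lvert k \rvert = 1$ of $\tilde{g}$ are absorbed by $R$ and $p$; the modes $\lvert k \rvert = 2$ of $\tilde{g}$ are supplied by the leading-term part of $u_{\text{hom}}$, which is exactly the subspace avoided by $v$; and the modes $\lvert k \rvert \geq 3$ of $\tilde{g}$ are supplied by the remainder part of $u_{\text{hom}}$, which is exactly the subspace avoided by $w(1,\cdot)$.

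Injectivity is immediate since any element of the kernel solves the homogeneous problem with zero boundary data, which by Proposition \ref{homogeneous} forces $R=0$, $p=0$, $u=0$. The operator norm of the inverse is controlled by the triangle inequality applied to the bounds furnished by Propositions \ref{first inhomogeneous}, \ref{second inhomogeneous}, \ref{homogeneous}, and the bounded trace. No genuinely new difficulty arises at this stage; the delicate analysis (Cauchy--Euler factorization, Hardy inequality, the strict restriction $1/2 < \gamma < 1$) has already been carried out in the component propositions, and the role of the theorem is only to verify that the pieces fit together consistently.
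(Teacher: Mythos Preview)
Your proposal is correct and follows essentially the same approach as the paper: reduce via the $\psi^{1/2}$ isomorphism, split $f$ along the direct sum, solve the two inhomogeneous pieces via Propositions \ref{first inhomogeneous} and \ref{second inhomogeneous}, and correct with a homogeneous solution to match the boundary data. The only organizational difference is that you invoke Proposition \ref{homogeneous} as a black box on the corrected boundary datum $\tilde g = g - u_{\text{inh}}(1,\cdot)$, whereas the paper unpacks the Fourier-mode matching explicitly (writing out the system for $R$, $p_x$, $p_y$, $c_k$ and bounding each piece separately); the two arguments are equivalent, and yours is slightly more streamlined.
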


\begin{proof}
By proposition \ref{boundedness} and the fact that multiplication by $\psi^{-1/2}$ defines an isomorphism from $J_{1/2, \gamma}^{m-2,\sigma}(\Pi)$ to $J_{0, \gamma}^{m-2,\sigma}(\Pi)$, the linear map is bounded in the above spaces.

To construct and bound the inverse, we must be careful to match the boundary conditions correctly. Let $u_g = \psi^{1/2}v(\theta) + w(\psi,\theta)$, where $v(\theta)$ is solution to \ref{first component} and $w(\psi,\theta)$ is solution to \ref{second component}. Let $u_h = \sum_{\lvert k \rvert \geq 2} c_k \psi^{\frac{-1+\lvert k \rvert}{2}} e^{ik\theta}$ be the homogeneous solution with coefficients $c_k$ to be determined. Then the full solution is
\begin{align*}
u(\psi,\theta) & = u_g + u_h \\
& = \psi^{\frac{1}{2}} \left( \hat{v}(\theta) +c_2e^{2i\theta}+c_{-2}e^{-2i\theta} \right) + w(\psi,\theta) + \sum_{\lvert k \rvert \geq 3}c_k \psi^{\frac{-1+\lvert k \rvert}{2}} e^{ik\theta} \\ 
& = \psi^{\frac{1}{2}} \left( \sum_k \hat{v}_k e^{ik\theta} + \sum_{\lvert k \rvert = 2} c_k e^{ik\theta} \right)+  \sum_k \hat{w}_k(\psi)e^{ik\theta} +\sum_{\lvert k \rvert \geq 3}c_k \psi^{\frac{-1+\lvert k \rvert}{2}} e^{ik\theta}.
\end{align*}
Now we must match the boundary condition
\begin{equation*}
R + ( \frac{p_x-ip_y}{2} )e^{i\theta}  + ( \frac{p_x+ip_y}{2} ) e^{-i\theta} + u(1,\theta) = g(\theta).
\end{equation*}
Keeping in mind that $\hat{v}_{\pm 2} = 0$, and $\hat{w}_k(1) = 0$ for $\lvert k \rvert \geq 3$, the boundary condition yields
\begin{align*}
& R + ( \frac{p_x-ip_y}{2} )e^{i\theta}  + ( \frac{p_x+ip_y}{2} ) e^{-i\theta} + \sum_{\lvert k \rvert \neq 2} \hat{v}_ke^{ik\theta} + \sum_{\lvert k \rvert \leq 2} \hat{w}_k(1)e^{ik\theta} + \sum_{\lvert k \rvert \geq 2} c_k e^{ik\theta} \\ & = \sum_k \hat{g}_k e^{ik\theta}.
\end{align*} 
This gives us the following set of equations on the Fourier modes
\begin{equation*}
\begin{cases}
R + \hat{v}_0 + \hat{w}_0(1) = \hat{g}_0 \\
\frac{p_x \mp ip_y}{2} + \hat{v}_{\pm 1} + \hat{w}_{\pm 1}(1) = \hat{g}_{\pm 1} \\
\hat{w}_{\pm 2}(1) + c_{\pm 2} = \hat{g}_{\pm 2}\\
\hat{v}_k + c_k = \hat{g}_k, \text { for } \lvert k \rvert \geq 3.
\end{cases}
\end{equation*}
Solving for $R$, $p_x$, $p_y$ and $c_k$, we get
\begin{equation*}
\begin{cases}
R = \hat{g}_0 -\hat{v}_0 - \hat{w}_0(1) \\
p_x = \bigl(\hat{g}_{1} +\hat{g}_{-1}\bigr) - \bigl(\hat{v}_{1} + \hat{v}_{-1}\bigr) - \bigl(\hat{w}_{1}(1) + \hat{w}_{-1}(1)\bigr) \\
-ip_y = \bigl(\hat{g}_{1} - \hat{g}_{-1}\bigr) - \bigl(\hat{v}_{1} - \hat{v}_{-1}\bigr) - \bigl(\hat{w}_{1}(1) - \hat{w}_{-1}(1)\bigr) \\
c_{\pm 2} = \hat{g}_{\pm 2} - \hat{w}_{\pm 2}(1) \\
c_k = \hat{g}_k - \hat{v}_k, \text { for } \lvert k \rvert \geq 3.
\end{cases}
\end{equation*}
Having now constructed the solution to the boundary value \ref{true linear problem}, we can now establish its boundedness. We start with
\begin{align*}
\lvert R \rvert^2 + \lvert p_x \rvert^2 + \lvert p_y \rvert^2 & \leq C \sum_{\lvert k \rvert \leq 1} \bigl( \lvert \hat{g}_k \rvert^2 + \lvert \hat{v}_k \rvert^2 + \lvert \hat{w}_k(1) \rvert^2 \bigr) \\
& \leq C \left( \lVert g(\theta) \rVert_{X_\sigma^{m-1/2}}^2 + \lVert v(\theta) \rVert_{X_\sigma^m}^2 + \lVert w(1,\theta) \rVert_{X_\sigma^{m-1/2}}^2 \right) \\
& \leq C \left( \lVert g(\theta) \rVert_{X_\sigma^{m-1/2}}^2 + \lVert v(\theta) \rVert_{X_\sigma^m}^2 + \lVert w(\psi,\theta) \rVert_{K_{1/2+\gamma}^{m,\sigma}}^2 \right) \\
& \leq C \left( \lVert g(\theta) \rVert_{X_\sigma^{m-1/2}}^2 + \lVert \xi(\theta) \rVert_{X_\sigma^{m-2}}^2 + \lVert \eta(\psi,\theta) \rVert_{K_{1/2+\gamma}^{m-2,\sigma}}^2 \right) \\
& = C \left( \lVert g(\theta) \rVert_{X_\sigma^{m-1/2}}^2 + \lVert f(\psi,\theta) \rVert_{J_{1/2, \gamma}^{m-2,\sigma}}^2 \right),
\end{align*}
where the third inequality follows from the second by boundedness of restriction to $\psi=1$ from $K_{1/2+\gamma}^{m,\sigma}(\Pi)$ to $X_\sigma^{m-1/2}(\T)$, and the fourth inequality follows from the third by propositions \ref{first inhomogeneous}, \ref{second inhomogeneous}.

Next we bound the leading term of the solution
\begin{align*}
& \bigl\lVert v(\theta) + \sum_{\lvert k \rvert = 2}c_k e^{ik\theta} \bigr\rVert_{X_\sigma^m}^2 \\
& \leq C \lVert v(\theta) \rVert_{X_\sigma^m}^2 + C \sum_{\lvert k \rvert = 2} (1+k^2)^m e^{2\sigma \lvert k \rvert} \left( \lvert \hat{g}_k \rvert^2 + \lvert \hat{w}_k(1) \rvert^2 \right) \\
& \leq C \lVert v(\theta) \rVert_{X_\sigma^m}^2 + C \sum_{\lvert k \rvert = 2} (1+k^2)^{m-1/2} e^{2\sigma \lvert k \rvert} \left( \lvert \hat{g}_k \rvert^2 + \lvert \hat{w}_k(1) \rvert^2 \right) \\
& \leq C \left( \lVert v(\theta) \rVert_{X_\sigma^m}^2 + \lVert g(\theta) \rVert_{X_\sigma^{m-1/2}}^2 + \lVert w(1,\theta) \rVert_{X_\sigma^{m-1/2}}^2 \right) \\
& \leq C \left( \lVert v(\theta) \rVert_{X_\sigma^m}^2 + \lVert g(\theta) \rVert_{X_\sigma^{m-1/2}}^2 + \lVert w(\psi,\theta) \rVert_{K_{1/2+\gamma}^{m, \sigma}}^2 \right) \\
& \leq C \left( \lVert \xi(\theta) \rVert_{X_\sigma^{m-2}}^2 + \lVert g(\theta) \rVert_{X_\sigma^{m-1/2}}^2 + \lVert \eta(\psi,\theta) \rVert_{K_{1/2+\gamma}^{m-2, \sigma}}^2 \right) \\
& \leq C \left( \lVert g(\theta) \rVert_{X_\sigma^{m-1/2}}^2 + \lVert f(\psi,\theta) \rVert_{J_{1/2,\gamma}^{m-2, \sigma}}^2 \right),
\end{align*}
where again we have used the boundedness of the restriction to $\psi=1$ and propositions \ref{first inhomogeneous}, \ref{second inhomogeneous}.

Finally we bound the remainder term of the solution
\begin{align*}
\bigl\lVert w(\psi,\theta) + & \sum_{\lvert k \rvert \geq 3} c_k \psi^{\frac{-1+\lvert k \rvert}{2}}e^{i k\theta} \bigr\rVert_{K_{1/2+\gamma}^{m,\sigma}}^2 \\
& \leq C \lVert w(\psi,\theta) \rVert_{K_{1/2+\gamma}^{m,\sigma}}^2 + C \bigl\lVert \sum_{\lvert k \rvert \geq 3} (\hat{g}_k - \hat{v}_k) \psi^{\frac{-1+\lvert k \rvert}{2}}e^{i k\theta} \bigr\rVert_{K_{1/2+\gamma}^{m,\sigma}}^2 \\
& \leq C \lVert w(\psi,\theta) \rVert_{K_{1/2+\gamma}^{m,\sigma}}^2 + \lVert g(\theta) - v(\theta) \rVert_{X_\sigma^{m-1/2}}^2 \\
& \leq C \lVert w(\psi,\theta) \rVert_{K_{1/2+\gamma}^{m,\sigma}}^2 + \lVert g(\theta) \rVert_{X_\sigma^{m-1/2}}^2+ \lVert v(\theta) \rVert_{X_\sigma^{m-1/2}}^2 \\
& \leq C \lVert w(\psi,\theta) \rVert_{K_{1/2+\gamma}^{m,\sigma}}^2 + \lVert g(\theta) \rVert_{X_\sigma^{m-1/2}}^2+ \lVert v(\theta) \rVert_{X_\sigma^{m}}^2 \\
& \leq C \lVert \eta(\psi,\theta) \rVert_{K_{1/2+\gamma}^{m-2,\sigma}}^2 + \lVert g(\theta) \rVert_{X_\sigma^{m-1/2}}^2+ \lVert \xi(\theta) \rVert_{X_\sigma^{m-2}}^2 \\
& = C \left(  \lVert g(\theta) \rVert_{X_\sigma^{m-1/2}}^2 + \lVert f(\psi,\theta) \rVert_{J_{1/2, \gamma}^{m-2,\sigma}}^2 \right)
\end{align*}
where we have used propositions \ref{homogeneous}, \ref{first inhomogeneous} and \ref{second inhomogeneous}.

We have thus established the bound
\begin{equation*}
\lvert R \rvert^2 + \lvert p \rvert^2 + \lVert u(\psi,\theta) \rVert_{J_{1/2, \gamma}^{m,\sigma}(\Pi)} \leq C \left(  \lVert g(\theta) \rVert_{X_\sigma^{m-1/2}(\T)}^2 + \lVert f(\psi,\theta) \rVert_{J_{1/2, \gamma}^{m-2,\sigma}(\Pi)}^2 \right)
\end{equation*}
for the inverse to \ref{linear problem}.

Finally, since multiplication by $\psi^{-1/2}$ is an isomorphism from $J_{1/2, \gamma}^{m-2,\sigma}(\Pi)$ to $J_{0, \gamma}^{m-2,\sigma}(\Pi)$, we conclude that the linear problem \ref{true linear problem} defines an isomorphism
\begin{equation}
\C^3 \times J_{1/2, \gamma}^{m, \sigma}(\Pi) \to \widetilde{J}_{0, \gamma}^{m-2, \sigma}(\Pi) \times X_\sigma^{m-1/2}(\T)
\end{equation}
for $1/2 < \gamma < 1$.
\end{proof}

To conclude, let us summarize our findings. We have shown that the linearization to the nonlinear problem \ref{NLBVP} at solution $\psi^{1/2}$ defines an isomorphism in our spaces when $1/2 < \gamma < 1$. Recall that $J_{\lambda+\gamma}^{m,\sigma}(\Pi)$ is well defined for $\gamma \geq 1/2$ and that $\gamma$ quantifies the gap between the leading term asymptotics of order $\psi^\lambda$ and the order of asymptotics of the remainder term. The restriction $\gamma < 1$ ensures that this gap is small enough that we have sufficient low order terms to match the boundary condition. The restriction $\gamma > 1/2$ is needed to apply the Hardy inequality to establish the boundedness of the inverse. Recall that though $J_{1/2+\gamma}^{m,\sigma}(\Pi)$ is well defined for $\gamma \geq 1/2$, it is precisely when $\gamma > 1/2$ that this space embeds into continuous functions. Since continuity is certainly not an unreasonable expectation of our stationary flows, we need not view this restriction on the lower bound of $\gamma$ as a limitation of our result. Finally, we have seen how the additional degrees of freedom in the solution provided by $R$ and $p$ are crucial for the surjectivity of the linear problem. They accommodate for the $\lvert k \rvert \leq 1$ Fourier modes of the boundary perturbation, corresponding to infinitesimal dilations and translations of the solution.


\newpage

\subsection{Analyticity of the nonlinear operator}\ \\

We prove the following result.

\begin{theorem}\ \\
Let $m>3$ and $\gamma > 1/2$. For any $\tau > \sigma > 0$, there exists a neighbourhood of $(F,b,R,p,a) = (4,1,1,0, \psi^{1/2})$ in which the map
\begin{gather*}
(F,b,R,p,a) \to  \big(\Xi(a) - F, B \big)\\
 J_{0,\gamma}^{m-2}[0,1] \times \mathrm{H}(\T_\tau) \times \C^3 \times J_{1/2,\gamma}^{m,\sigma}(\Pi) \to \widetilde{J}_{0,\gamma}^{m-2,\sigma}(\Pi) \times X_\sigma^{m-1/2}(\T)
\end{gather*}
is complex analytic.
\end{theorem}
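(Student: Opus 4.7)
The plan is to split the map into its two components and reduce analyticity to three structural ingredients: (i) a bounded multiplication on the $J$-scale, (ii) analyticity of pointwise reciprocals around points bounded below, and (iii) an analytic superposition lemma for the boundary term. The linear part $F\mapsto -F$ is bounded from $J_{0,\gamma}^{m-2}[0,1]$ into $\widetilde J_{0,\gamma}^{m-2,\sigma}(\Pi)$ by constant continuation in $\theta$ (since $\theta$-independent functions have vanishing $e^{\pm 2i\theta}$ Fourier modes), so it remains to prove analyticity of $a\mapsto \Xi(a)$ into $\widetilde J_{0,\gamma}^{m-2,\sigma}(\Pi)$ and of $(b,R,p,a)\mapsto B$ into $X_\sigma^{m-1/2}(\T)$.

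For the interior map, the first task is a multiplication lemma asserting $J_{\lambda_1,\gamma}^{m,\sigma}(\Pi)\cdot J_{\lambda_2,\gamma}^{m,\sigma}(\Pi)\hookrightarrow J_{\lambda_1+\lambda_2,\gamma}^{m,\sigma}(\Pi)$ continuously. The approach is to expand the product of two direct-sum representatives $(v_i\psi^{\lambda_i}+w_i)$ into four cross terms, apply the Leibniz rule with the weight $\psi^{p-\lambda-\gamma}$, and absorb one $L^\infty$ factor per term using the Morrey-type bound of Proposition \ref{Morrey Kondratev}, which is available because $m>3$ and $\gamma>1/2$. The analyticity of reciprocals then follows from writing $a=\psi^{1/2}(v+\psi^{-1/2}w)$ and $a_\psi=\psi^{-1/2}(\tfrac12 v+\psi^{1/2}w_\psi)$: by the isomorphism $\psi^{-1/2}\cdot:K_{1/2+\gamma}^m\to K_\gamma^m$, the second factors lie close to the positive constants $1$ and $\tfrac12$ inside the Banach algebra $X_\sigma^m(\T)\oplus K_\gamma^{m,\sigma}(\Pi)$, so their reciprocals are given by locally convergent Neumann series and depend analytically on $a$. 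Since $\Xi(a)$ is a polynomial in $a_\theta,\,a_{\psi\theta},\,a_{\theta\theta},\,a_{\psi\psi},\,1/a,\,1/a_\psi$, each of which maps boundedly between appropriate $J$-spaces by Proposition \ref{Kondratev properties}, the composite is analytic with values in $J_{0,\gamma}^{m-2,\sigma}(\Pi)$. A direct inspection of the $\psi^0$-asymptotic leading term of $\Xi(a)-F$ at the reference (where the angular behaviour is trivial) shows it annihilates the $e^{\pm 2i\theta}$ moments, placing the image in the subspace $\widetilde J_{0,\gamma}^{m-2,\sigma}(\Pi)$.

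For the boundary map $B$, analyticity reduces to an analytic superposition statement for $b$. The trace $a(1,\theta)\in X_\sigma^{m-1/2}(\T)$ is bounded by Proposition \ref{Kondratev properties}, and polynomial combinations of $R,\,p,\,a(1,\theta),\,\sin\theta,\,\cos\theta$ remain in the Banach algebra $X_\sigma^{m-1/2}(\T)$, so both components $p_x+Ra(1,\theta)\cos\theta$ and $p_y+Ra(1,\theta)\sin\theta$ depend analytically on the parameters. Near the reference these components parameterise a complex curve close to the unit circle, on a neighbourhood of which $\arctan(\cdot,\cdot)$ is a real analytic function of its two arguments, yielding an analytic map of $(R,p,a)$ into $X_\sigma^{m-1/2}(\T)$. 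The gap $\tau>\sigma$ is used decisively when composing with $b\in\mathrm{H}(\T_\tau)$: the analytic continuation of the angle to $\T_\sigma$ stays inside $\T_\tau$ provided the parameters are close enough to the reference, so Taylor-expanding $b$ around points of $\T_\tau$ defines an analytic map $b\mapsto b(\arctan(\cdots))\in X_\sigma^{m-1/2}(\T)$ jointly in all variables. Squaring this composition and adding the remaining polynomial pieces of $B$ completes the argument.

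The main obstacle is the multiplication/inversion machinery on the $J_{\lambda,\gamma}^{m,\sigma}$ scale: the direct-sum decomposition into a leading asymptotic $v(\theta)\psi^\lambda$ plus a weighted Kondratev remainder does not interact with products as cleanly as a single Sobolev scale, and the weights $\psi^{p-\lambda-\gamma}$ together with the partial $\theta$-analyticity on the traces $\T\pm i\sigma$ have to be tracked through all four cross terms. Once these structural lemmas are in place, the rest of the proof is an assembly of bounded multilinear operations and locally convergent power series, each of which manifestly preserves analyticity.
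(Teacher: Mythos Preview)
Your overall strategy matches the paper's: reduce $\Xi$ to products and reciprocals in a Banach algebra built from the $J$-scale, and handle $B$ via an analytic superposition argument together with the $\tau>\sigma$ slack to accommodate the angle change. Your general multiplication lemma $J_{\lambda_1,\gamma}^{m,\sigma}\cdot J_{\lambda_2,\gamma}^{m,\sigma}\hookrightarrow J_{\lambda_1+\lambda_2,\gamma}^{m,\sigma}$ is equivalent, via the isomorphism $u\mapsto\psi^\alpha u$, to the paper's choice of first pushing everything into $J_{0,\gamma}^{m-2,\sigma}$ and proving that single space is an algebra; neither approach buys anything the other does not.

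There is, however, a genuine gap in your treatment of the codimension-two constraint defining $\widetilde J_{0,\gamma}^{m-2,\sigma}(\Pi)$. You write that ``a direct inspection of the $\psi^0$-asymptotic leading term of $\Xi(a)-F$ \emph{at the reference} (where the angular behaviour is trivial) shows it annihilates the $e^{\pm 2i\theta}$ moments.'' Checking only at $a=\psi^{1/2}$ is not enough: the theorem asserts that the map takes values in $\widetilde J$ on an entire neighbourhood, so you must show the $e^{\pm 2i\theta}$ Fourier modes of the leading term of $\Xi(a)$ vanish for \emph{every} $a$ near $\psi^{1/2}$. Since the leading term of $\Xi(a)$ depends only on the leading term $\psi^{1/2}\xi(\theta)$ of $a$, this amounts to proving
\[
\int_\T \Xi\bigl(\psi^{1/2}\xi(\theta)\bigr)\,e^{\pm 2i\theta}\,\dd\theta = 0
\]
for all $\xi$ close to $1$. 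The paper carries this out by computing $\Xi(\psi^{1/2}\xi)=4\xi^{-2}+6(D\xi)^2\xi^{-4}-2(D^2\xi)\xi^{-3}$ and integrating by parts twice; the cancellation is a genuine identity, not an artifact of the reference point. Without this step your map is only known to land in $J_{0,\gamma}^{m-2,\sigma}(\Pi)$, and the theorem as stated does not follow.
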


We split the proof in two main parts: one for the differential operator and one for the boundary operator. We will find that these reduce to the study of superposition operators in spaces $J_{0, \gamma}^{m, \sigma}(\Pi)$ and $X_\sigma^{m-1/2}(\T)$ respectively. Such maps turn out to be analytic precisely when these spaces are algebras. We start with a standard result on superposition operators acting on $H^m(\T)$ when $m > 1/2$, that is when $H^m(\T) \in C(\T)$.

\begin{proposition}\label{superposition sobolev} \ \\
Let $f \in C^{m}$ on a domain containing image of function $u$. Then $u \to f(u) : H^m(\T) \to H^m(\T)$ is well defined and continuous for $m > 1/2$. Furthermore,  if $f \in C^{m+1}$, then this map is $C^1$.
\end{proposition}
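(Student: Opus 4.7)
The plan is threefold: (i) use the Sobolev embedding to make sense of $f(u)$ pointwise; (ii) bound $\|f(u)\|_{H^m}$ via Fa\`a di Bruno combined with Moser-type product estimates; (iii) upgrade to continuity and $C^1$ regularity using the fact that $H^m(\T)$ is a Banach algebra for $m > 1/2$.

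Since $m > 1/2$, the embedding $H^m(\T) \hookrightarrow C(\T)$ gives $u \in C(\T)$, so $f(u)$ is defined pointwise; if $u$ varies in a bounded ball of $H^m$, its image lies in a fixed compact interval $K$ on which $f$ and the relevant derivatives are uniformly bounded. For integer $m$, Fa\`a di Bruno's formula expresses $D^m(f \circ u)$ as a finite sum of terms $f^{(k)}(u) \cdot D^{\alpha_1}u \cdots D^{\alpha_k}u$ with $\alpha_1+\cdots+\alpha_k = m$ and $\alpha_i \geq 1$. Applying the Moser inequality $\|D^{\alpha_1}u \cdots D^{\alpha_k}u\|_{L^2} \lesssim \|u\|_{L^\infty}^{k-1}\|u\|_{H^m}$ together with the $L^\infty$ bound on $f^{(k)}|_K$ yields $\|f(u)\|_{H^m} \leq C(f, \|u\|_{L^\infty})(1 + \|u\|_{H^m})$. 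For non-integer $m = j + \theta$ with $0 < \theta < 1$, the top-order derivatives are handled analogously at the level of the Slobodeckij seminorm: $[f^{(k)}(u)]_{H^\theta} \leq \|f^{(k+1)}\|_{L^\infty(K)}\,[u]_{H^\theta}$, and products of factors are controlled via the $H^m$ algebra property.

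For continuity I would write
\[ f(u_1) - f(u_2) = (u_1 - u_2) \int_0^1 f'\bigl(u_2 + t(u_1 - u_2)\bigr)\,dt; \]
the integrand is uniformly bounded in $H^m$ by the previous step applied to $f'$, and the product with $u_1 - u_2 \in H^m$ is controlled by the algebra structure. Convergence $f(u_1) \to f(u_2)$ in $H^m$ as $u_1 \to u_2$ follows from uniform $H^m$ bounds on the integrand, convergence in $L^\infty$ of the Nemytskii composition at the level of $f'$, and dominated convergence.

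For the $C^1$ claim with $f \in C^{m+1}$, the candidate Fr\'echet derivative at $u$ is $h \mapsto f'(u)\,h$, which is a bounded map $H^m \to H^m$ by applying the composition bound to $f' \in C^m$ and using the algebra. Continuity of $u \mapsto f'(u)$ in $H^m$ (hence in operator norm) follows from step (iii) applied to $f'$. Finally the Taylor remainder
\[ f(u+h) - f(u) - f'(u)h = h^2 \int_0^1 (1-t)\,f''(u+th)\,dt \]
is $o(\|h\|_{H^m})$ since $\|h^2\|_{H^m} \lesssim \|h\|_{H^m}^2$ by the algebra property and the integral stays bounded in $H^m$. The main obstacle is the non-integer $m$ case of the basic composition bound, since Fa\`a di Bruno produces only integer-order derivatives and one must close the estimate at fractional order using the Slobodeckij seminorm and the $H^m$ algebra structure, which is precisely where the threshold $m > 1/2$ is essential.
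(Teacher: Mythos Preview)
Your proposal is correct and follows essentially the same route as the paper: Fa\`a di Bruno to expand $D^p f(u)$, product estimates (the paper phrases these as H\"older plus Sobolev embeddings into $C$ or $L^p$, which is the same mechanism behind the Moser inequality you invoke), and the $H^m$ algebra property for the Fr\'echet derivative $h \mapsto f'(u)h$. The paper's proof is in fact a brief sketch, whereas you supply more detail---in particular the integral representations for continuity and the Taylor remainder for differentiability---so your argument is, if anything, more complete than what the paper records.
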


\begin{proof}
The $p$-th order derivative in $x$ of composition $f\big(u(x)\big)$ can be written
\begin{equation*}
D^pf(u) = \sum_{j=1}^p \sum_{\substack{\alpha_1 + \cdot \cdot \cdot + \alpha_j = p \\ \alpha_i \geq 1}} C_{\alpha_1 , ... , \alpha_j} f^{(j)}(u) (D^{\alpha_1}u) \cdot \cdot \cdot (D^{\alpha_j}u).
\end{equation*}
One can bound the $L^2$ norms (or the Slobodeckij norm when $m$ is not an integer) of the above expression by H\"{o}lder's inequality combined with the Sobolev embeddings of the factors into either continuous functions or appropriate $L^p$ functions. This shows the map is well-defined into $H^m(\T)$. Next, the Fr\'{e}chet derivative is given by multiplication by $f'(u(x))$, a linear map on $H^m(\T)$ that is well defined and continuous when $H^m(\T)$ is an algebra, that is, $m > 1/2$.
\end{proof}


\subsubsection{Differential Operator}\ \\

Our aim is to show $a \to \Xi(a) : J_{1/2, \gamma}^{m,\sigma}(\Pi) \to \widetilde{J}_{0, \gamma}^{m-2, \sigma}(\Pi)$ is analytic near $a = \psi^{1/2}$, where operator $\Xi$ is given by the expression:
\begin{equation}
\Xi(a) = -\frac{1}{a_\psi^3} \Big( 1 + \frac{a_\theta^2}{a^2} \Big) a_{\psi \psi} + 2 \Big( \frac{a_\theta}{a^2 a_\psi^2} \Big) a_{\psi \theta} - \Big( \frac{1}{a^2 a_\psi}\Big) a_{\theta \theta} + \frac{1}{a a_\psi}.
\end{equation}
Notice, this map is a rational function of derivatives of $a(\psi,\theta)$, in other words a superposition map $a \to f(a, a_\psi, a_\theta, a_{\psi \psi}, a_{\psi \theta}, a_{\theta \theta})$, defined by a rational function $f$. The trouble is that these derivatives have distinct leading term asymptotics $\psi^\lambda$, defined by different weights of $J_{\lambda,\gamma}^{m, \sigma}(\Pi)$. It is hopeless to expect any general results of superposition operators on such spaces, regardless of $m$. One expects an exception to this observation when $\lambda = 0$, and thus the leading term is order $1$. We are thus motivated to rewrite $\Xi$ as an operator on $J_{0, \gamma}^{m-2, \sigma}(\Pi)$. 

To do this, we exploit the fact that $a \to \psi^\alpha a : J_{1/2,\gamma}^{m,\sigma}(\Pi) \to J_{1/2+\alpha, \gamma}^{m,\sigma}(\Pi)$ is an isomorphism. In particular we observe, if $a \in J_{1/2, \gamma}^{m, \sigma}(\Pi)$, then each of the following functions belongs to $J_{0, \gamma}^{m-2 ,\sigma}(\Pi)$: 
\begin{equation*}
[ \psi^{-1/2}a ] \text{ , } [ \psi^{1/2} a_\psi ]  \text{ , }  [ \psi^{-1/2}a_\theta ] \text{ , } [ \psi^{-1/2}a_{\theta \theta} ] \text{ , } [\psi^{1/2} a_{\psi \theta}] \text{ , } [\psi^{3/2} a_{\psi \psi}].
\end{equation*}
Writing, $a = \psi^{1/2} [ \psi^{-1/2}a ]$, $a_\psi = \psi^{-1/2} [ \psi^{1/2} a_\psi ]$, etc, and substituting into $\Xi(a)$, we find
\begin{align*}
\Xi(a) & =  -\frac{1}{[ \psi^{1/2} a_\psi ]^3} \Big( 1 + \frac{[ \psi^{-1/2}a_\theta ]^2}{[ \psi^{-1/2}a ]^2} \Big) [\psi^{3/2} a_{\psi \psi}] + 2\frac{[ \psi^{-1/2}a_\theta ] [\psi^{1/2} a_{\psi \theta}]}{[ \psi^{-1/2}a ]^2  [ \psi^{1/2} a_\psi ]^2} \\
& - \frac{ [ \psi^{-1/2}a_{\theta \theta} ]}{[ \psi^{-1/2}a ]^2 [ \psi^{1/2} a_\psi ]} + \frac{1}{[ \psi^{-1/2}a ]  [ \psi^{1/2} a_\psi ]}.
\end{align*}
Notice that all of the $\psi^{\alpha}$ terms outside of the square brackets have cancelled. What remains is a rational function of only square brackets. Each of the square brackets is a multiplication and derivative of $a(\psi,\theta)$ lying in $J_{0, \gamma}^{m-2, \sigma}(\Pi)$, thus analytically depends on $a(\psi,\theta) \in J_{1/2, \gamma}^{m ,\sigma}(\Pi)$. The rational function defining $\Xi$ as written above is analytic so long as the denominator is never zero, that is so long as $\psi^{-1/2}a \neq 0$ and $\psi^{1/2} a_\psi \neq 0$. Since $\Xi(a)$ is now a sum of reciprocals and products of functions in $J_{0,\gamma}^{m-2, \sigma}(\Pi)$, it is enough to prove that the maps 
\begin{gather*}
(u,v) \to uv : J_{0,\gamma}^{m, \sigma}(\Pi) \times J_{0,\gamma}^{m, \sigma}(\Pi) \to J_{0,\gamma}^{m, \sigma}(\Pi), \\
u \to \frac{1}{u} : J_{0,\gamma}^{m, \sigma}(\Pi) \to J_{0,\gamma}^{m, \sigma}(\Pi)
\end{gather*}
are well defined and analytic. We must first prove that the space is an algebra. We start with some useful lemmas in the Kondratev spaces.

\begin{lemma}\label{Embedding Kondratev}
$K_{\gamma}^1(\Pi) \subset L^p(\Pi)$ when $\gamma > \frac{1}{2} - \frac{1}{p}$, with $ \lVert u \rVert_{L^p(\Pi)} \leq C \lVert u \rVert_{K_\gamma^1(\Pi)}$.
\end{lemma}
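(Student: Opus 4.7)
The strategy is to reduce the estimate to the classical $H^1 \hookrightarrow L^p$ Sobolev embedding on a cylinder, via the blow-up change of variables adapted to the Kondratev scaling. I would set $t = -\log\psi \in [0,\infty)$, $Z = [0,\infty) \times \T$, and
\begin{equation*}
w(t,\theta) := e^{(\gamma - 1/2)t}\, u(e^{-t}, \theta).
\end{equation*}
A direct computation using $d\psi = -e^{-t}dt$ and $\psi\partial_\psi = -\partial_t$ shows that $\psi^{-\gamma}u$, $\psi^{1-\gamma}u_\psi$, and $\psi^{-\gamma}u_\theta$ correspond in $L^2$ to $w$, $(\gamma-1/2)w - \partial_t w$, and $\partial_\theta w$ respectively (the Jacobian $e^{-t}$ absorbs the weight $\psi^{-2\gamma}$ after the substitution), giving a two-sided comparison
\begin{equation*}
c\lVert w \rVert_{H^1(Z)} \leq \lVert u \rVert_{K_\gamma^1(\Pi)} \leq C\lVert w \rVert_{H^1(Z)}
\end{equation*}
with constants depending only on $\gamma$.

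Under the same substitution the target norm becomes
\begin{equation*}
\int_\Pi |u|^p\, d\psi\, d\theta = \int_Z e^{-\delta t}|w|^p\, dt\, d\theta, \qquad \delta := 1 + p(\gamma - \tfrac{1}{2}),
\end{equation*}
and the hypothesis $\gamma > 1/2 - 1/p$ is exactly the condition $\delta > 0$. For $p \geq 2$, I would partition $Z$ into the unit cells $C_n = [n,n+1] \times \T$; since each $C_n$ is a translate of $C_0$, the 2D Sobolev embedding $H^1(C_n) \hookrightarrow L^p(C_n)$ holds with a constant uniform in $n$. Using $e^{-\delta t} \leq e^{-\delta n}$ on $C_n$ together with the elementary inequality $\sum_n a_n^{p/2} \leq \bigl(\sum_n a_n\bigr)^{p/2}$ for $p/2 \geq 1$, applied with $a_n := e^{-2\delta n/p}\lVert w\rVert_{H^1(C_n)}^2$, yields
\begin{equation*}
\lVert u \rVert_{L^p(\Pi)}^p \leq C\sum_n e^{-\delta n}\lVert w\rVert_{H^1(C_n)}^p \leq C\lVert w \rVert_{H^1(Z)}^p,
\end{equation*}
which combined with the first step gives the desired bound.

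For $1 \leq p < 2$ no Sobolev inequality is needed: writing $|u|^p = \psi^{p\gamma}(\psi^{-\gamma}|u|)^p$ and applying H\"older with conjugate exponents $2/(2-p)$ and $2/p$ gives
\begin{equation*}
\lVert u\rVert_{L^p(\Pi)}^p \leq \left(\int_\Pi \psi^{2p\gamma/(2-p)}\,d\psi\,d\theta\right)^{(2-p)/2}\lVert \psi^{-\gamma}u\rVert_{L^2(\Pi)}^p.
\end{equation*}
The $\psi$-weight integrates on $[0,1]$ precisely when $2p\gamma/(2-p) > -1$, which rearranges exactly to $\gamma > 1/2 - 1/p$, and the second factor is controlled by $\lVert u\rVert_{K_\gamma^1}^p$.

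The main obstacle, modest as it is, lies in keeping the Sobolev embedding constant uniform across the translated cells and carefully packaging the weighted summation in the $p \geq 2$ case; once the cylinder reduction is in place, the remaining steps are routine, and the sharp condition $\gamma > 1/2 - 1/p$ emerges naturally as the integrability threshold for the exponential weight $e^{-\delta t}$ (equivalently, for the power $\psi^{2p\gamma/(2-p)}$ near $\psi = 0$).
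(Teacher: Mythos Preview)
Your argument is correct, but it takes a different route from the paper's. The paper slices in $\psi$: it invokes the earlier trace estimate (Proposition~\ref{Morrey Kondratev}) to get $\lVert u(\psi,\cdot)\rVert_{H^{1/2}(\T)} \leq C\psi^{\gamma-1/2}\lVert u\rVert_{K_\gamma^1(\Pi)}$, applies the one-dimensional critical Sobolev embedding $H^{1/2}(\T)\hookrightarrow L^p(\T)$ for every $p<\infty$, raises to the $p$th power, and integrates in $\psi$; the integrability condition $\int_0^1 \psi^{p(\gamma-1/2)}\,d\psi<\infty$ is exactly $\gamma>1/2-1/p$. Your approach instead performs the standard Kondratev logarithmic blow-up $t=-\log\psi$ to identify $K_\gamma^1(\Pi)$ with $H^1$ on the half-infinite cylinder, then uses a cell decomposition and the two-dimensional embedding $H^1\hookrightarrow L^p$ on each unit block. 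The paper's version is shorter and handles all $p$ uniformly because it leverages the already-stated pointwise trace estimate; your version is self-contained, closer to the textbook treatment of conic operators, and makes the role of the exponential weight $e^{-\delta t}$ explicit. Your case split at $p=2$ is genuinely needed in your framework (the $\ell^{p/2}$--$\ell^1$ inequality reverses for $p<2$), whereas the paper's slicing argument has no such bifurcation.
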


\begin{proof}
From \ref{Morrey Kondratev}, we have $\lVert u(\psi,\cdot) \rVert_{H^{1/2}(\T)} \leq C \psi^{\gamma-1/2} \lVert u \rVert_{K_\gamma^1(\Pi)}$. By the Sobolev embedding theorem in the critical case, for any $p < \infty$, we have $\lVert u(\psi,\cdot) \rVert_{L^p(\T)} \leq C\lVert u(\psi,\cdot) \rVert_{H^{1/2}(\T)}$. This gives 
\begin{equation*}
\lVert u(\psi,\cdot) \rVert_{L^p(\T)}^p = \int_\T \lvert u(\psi,\cdot) \rvert^p \dd{\theta} \leq C \psi^{p(\gamma-1/2)} \lVert u \rVert_{K_\gamma^1(\Pi)}^p.
\end{equation*}
Integrating over $\psi$ gives
\begin{equation*}
\lVert u \rVert_{L^p(\Pi)}^p = \int_0^1 \int_\T \lvert u(\psi,\cdot) \rvert^p \dd{\theta} \dd{\psi} \leq C  \lVert u \rVert_{K_\gamma^1(\Pi)}^p \int_0^1 \psi^{p(\gamma-1/2)} \dd{\psi}.
\end{equation*}
The right side is bounded when $p(\gamma - \frac{1}{2}) + 1 > 0 $, or $\gamma > \frac{1}{2} - \frac{1}{p}$.
\end{proof}

\begin{lemma}\label{bound on I}
Suppose $u \in K_\gamma^1(\Pi)$. Then $I(\theta) = \big( \int_0^1 \lvert \psi^{-\gamma} u(\psi,\theta) \rvert^2 \dd{\psi} \big)^{1/2 }\in C(\T)$ and $\lvert I \rvert \leq C \lVert u \rVert_{K_\gamma^1(\Pi)}$.
\end{lemma}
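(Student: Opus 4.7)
The plan is to reduce the statement to the elementary 1D Sobolev embedding $H^1(\T)\hookrightarrow C(\T)$, applied to Banach-valued functions. Set $v(\psi,\theta) = \psi^{-\gamma} u(\psi,\theta)$, so that $I(\theta) = \lVert v(\cdot,\theta)\rVert_{L^2(0,1)}$. The two $p=0$ terms in the definition of the Kondratev norm read $\lVert v\rVert_{L^2(\Pi)}^2 + \lVert \partial_\theta v\rVert_{L^2(\Pi)}^2 \leq \lVert u\rVert_{K_\gamma^1(\Pi)}^2$, which says precisely that $v$, viewed as a map $\T \to L^2(0,1)$, belongs to $H^1(\T;L^2(0,1))$.

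Next I invoke the Banach-valued version of the 1D Sobolev embedding: every $v\in H^1(\T;E)$ admits a continuous representative satisfying $\sup_\theta \lVert v(\theta)\rVert_E \leq C\lVert v\rVert_{H^1(\T;E)}$. With $E = L^2(0,1)$ this immediately yields continuity of $\theta \mapsto v(\cdot,\theta)$ in $L^2(0,1)$, whence $I(\theta) = \lVert v(\cdot,\theta)\rVert_{L^2(0,1)}$ is continuous on $\T$ with $\sup_\theta I(\theta) \leq C\lVert u\rVert_{K_\gamma^1(\Pi)}$, as required.

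If one prefers a self-contained argument, the same conclusion follows from the mean-value principle and the fundamental theorem of calculus. By Fubini, $v(\cdot,\theta)\in L^2(0,1)$ for a.e.\ $\theta$, and there exists $\theta_1\in\T$ with $I(\theta_1)^2 \leq \frac{1}{2\pi}\lVert v\rVert_{L^2(\Pi)}^2$. For any other $\theta$, writing $v(\psi,\theta) = v(\psi,\theta_1) + \int_{\theta_1}^\theta \partial_\theta v(\psi,s)\,ds$, taking the $L^2(0,1)$-norm in $\psi$, and applying Minkowski's integral inequality followed by Cauchy--Schwarz produces the Hölder-type estimate $|I(\theta) - I(\theta_1)| \leq |\theta-\theta_1|^{1/2}\lVert \partial_\theta v\rVert_{L^2(\Pi)}$. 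This simultaneously yields continuity of $I$ and the pointwise bound.

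I do not anticipate any real obstacle here: the weight $\psi^{-\gamma}$ is absorbed into $v$ at the outset, so the whole argument is independent of $\gamma$ and uses only the two terms of the Kondratev norm that involve no $\psi$-derivative. The only modest care is in passing from the a.e.\ identity produced by Fubini to the pointwise identity used in the fundamental theorem of calculus, which is standard for Bochner-measurable functions with an $L^2$ distributional derivative in $\theta$.
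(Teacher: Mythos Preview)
Your proposal is correct and follows essentially the same route as the paper: set $v=\psi^{-\gamma}u$, observe that the two $p=0$ terms of the $K_\gamma^1$ norm say exactly that $\theta\mapsto v(\cdot,\theta)$ lies in $H^1(\T;L^2[0,1])$, and then invoke the Banach-valued Sobolev embedding $H^1(\T;E)\hookrightarrow C(\T;E)$ to conclude continuity and the sup bound of $I(\theta)=\lVert v(\cdot,\theta)\rVert_{L^2}$. Your supplementary self-contained argument via the fundamental theorem of calculus and Minkowski is a nice alternative that the paper does not include, but it is not needed.
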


\begin{proof}
For $u(\psi,\theta) \in K_\gamma^1(\Pi)$, define the vector-valued map $\theta \to \psi^{-\gamma}u(\psi,\theta)$. This map belongs to $H^1(\T, L^2[0,1])$ because
\begin{align*}
 \lVert \psi^{-\gamma} u(\psi,\theta) & \rVert_{H^1(\T, L^2[0,1])}^2 \\ & = \Big\lVert \lVert \psi^{-\gamma} u(\psi,\theta) \rVert_{L^2[0,1]} \Big\rVert_{L^2(\T)}^2 + \Big\lVert \lVert \psi^{-\gamma} u_\theta(\psi,\theta) \rVert_{L^2[0,1]} \Big\rVert_{L^2(\T)}^2 \\
& \leq \lVert \psi^{-\gamma}u(\psi,\theta) \rVert_{L^2(\Pi)}^2 + \lVert \psi^{-\gamma}u_\theta(\psi,\theta) \rVert_{L^2(\Pi)}^2 \\
& \leq \lVert u \rVert_{K_\gamma^1(\Pi)}^2,
\end{align*}
which follows from the equivalence $L^2(\T, L^2[0,1] ) \cong L^2(\Pi)$. 

By the embedding $H^1(\T, L^2[0,1]) \subset C(\T, L^2[0,1])$, the function  $I(\theta)$ is a composition of continuous maps $\theta \to \psi^{-\gamma}u(\psi,\theta) : \T \to L^2[0,1]$ and $ \lVert \cdot \rVert_{L^2[0,1]} : L^2[0,1] \to \R$ and is thus continuous. Finally,
\begin{align*}
\lvert I(\theta) \rvert & = \lVert \psi^{-\gamma}u(\cdot,\theta) \rVert_{L^2[0,1]} \leq \lVert \psi^{-\gamma}u \rVert_{C(\T, L^2[0,1])} \leq C \lVert \psi^{-\gamma}u \rVert_{H^1(\T, L^2[0,1])} \\ & \leq C \lVert u \rVert_{K_\gamma^1(\Pi)}.
\end{align*}
\end{proof}
 
\begin{lemma}\label{KondratevAlgebra}\ \\
Let $m > 1$ and $\gamma > 1/2$. Then $K_{\gamma}^m(\Pi)$ is an algebra with $\lVert u v \rVert_{K_\gamma^m(\Pi)} \leq C \lVert u \rVert_{K_\gamma^m(\Pi)} \lVert v \rVert_{K_\gamma^m(\Pi)}.$
\end{lemma}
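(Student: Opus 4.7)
The plan is to apply the Leibniz rule to $\partial_\psi^p \partial_\theta^q(uv)$ and bound each resulting product with weight $\psi^{p-\gamma}$ in $L^2(\Pi)$, then sum over all $p+q \leq m$ in the definition of the $K_\gamma^m$-norm. The cross terms take the form $\binom{p}{i}\binom{q}{j}(\partial_\psi^i \partial_\theta^j u)(\partial_\psi^{p-i}\partial_\theta^{q-j} v)$ with $0 \leq i \leq p$ and $0 \leq j \leq q$, and I split these into endpoint and interior cases based on how many derivatives fall on each factor.

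In the endpoint case, one of the two factors is completely undifferentiated, i.e. $(i,j) = (0,0)$ or $(i,j) = (p,q)$. The pointwise estimate from Proposition \ref{Morrey Kondratev}, valid for $m > 1$ and $\gamma > 1/2$, gives $\lvert u(\psi,\theta)\rvert \leq C\psi^{\gamma-1/2}\lVert u\rVert_{K_\gamma^m}$, and since $\psi \in [0,1]$ and $\gamma > 1/2$, this is uniformly bounded by $C\lVert u\rVert_{K_\gamma^m}$ on $\Pi$. The other factor, $\psi^{p-\gamma}\partial_\psi^p\partial_\theta^q v$, lies in $L^2(\Pi)$ directly from the definition of $\lVert v\rVert_{K_\gamma^m}$, so the trivial $L^\infty \cdot L^2 \subset L^2$ pairing closes the estimate.

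In the interior case, both factors carry at least one but at most $m-1$ derivatives, so $m - (i+j) \geq 1$ and $m - (p-i) - (q-j) \geq 1$. I split the weight symmetrically as $\psi^{p-\gamma} = \psi^{i-\gamma/2}\cdot\psi^{(p-i)-\gamma/2}$. Using the isomorphism $u \to \psi^\alpha u : K_\beta^n \to K_{\beta+\alpha}^n$ together with the derivative bounds from Proposition \ref{Kondratev properties}, I obtain $\psi^{i-\gamma/2}\partial_\psi^i\partial_\theta^j u \in K_{\gamma/2}^{m-i-j}$ with norm controlled by $\lVert u\rVert_{K_\gamma^m}$, and similarly for the $v$-factor. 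Since $\gamma/2 > 1/4$, Lemma \ref{Embedding Kondratev} furnishes $K_{\gamma/2}^1 \subset L^4(\Pi)$, and the reserve $m-i-j \geq 1$ lets me apply this embedding to each factor. H\"older's inequality $\lVert fg\rVert_{L^2} \leq \lVert f\rVert_{L^4}\lVert g\rVert_{L^4}$ then delivers the product bound.

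The main obstacle is bookkeeping: verifying that the weight split interlocks correctly with the isomorphism $u \to \psi^\alpha u$ so that each factor lands in a Kondratev space with the right weight parameter, and recognizing that $\gamma > 1/2$ is precisely the threshold that simultaneously produces $\psi^{\gamma-1/2} \leq 1$ (for the endpoint $L^\infty$ bound) and $\gamma/2 > 1/4$ (for the interior $L^4$ embedding). Once these two mechanisms are in place, summing over the finitely many Leibniz terms in each $(p,q)$ with $p+q \leq m$ yields the desired algebra estimate.
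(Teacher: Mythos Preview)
Your proof is correct. Both your argument and the paper's use the Leibniz expansion and the $L^4$ embedding of Lemma~\ref{Embedding Kondratev}, but they split the terms differently. The paper uses the pointwise bound from Proposition~\ref{Morrey Kondratev} not just for the undifferentiated factor but whenever one factor carries at most $m-2$ derivatives (so that $m - (\alpha_i+\beta_i) > 1$); it then observes that if \emph{neither} factor satisfies this, one is forced into $m=2$, and only in that case are the three critical cross terms handled by the $L^4 \times L^4$ pairing. Your approach instead applies the $L^4 \times L^4$ argument uniformly to every interior term, regardless of $m$. This is slightly more economical in that it avoids the case distinction on $m$, and the symmetric weight split $\psi^{p-\gamma} = \psi^{i-\gamma/2}\psi^{(p-i)-\gamma/2}$ makes the bookkeeping transparent; the paper's version, on the other hand, isolates $m=2$ as the genuinely critical case where the $L^\infty$ route fails.
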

 
\begin{proof}
Suppose $m > 1$ and $\gamma > 1/2$. Then $K_\gamma^m(\Pi) \subset C(\overline{\Pi})$ and these functions vanish at $\psi = 0$. Take $u, v \in K_\gamma^m(\Pi)$. We must show $uv \in K_\gamma^m(\Pi)$. By the product rule, we can write
\begin{equation*}
\partial_\psi^p \partial_\theta^q ( uv ) = \sum_{p'=0}^p \sum_{q'=0}^q C_{p,q}^{p',q'} \partial_\psi^{p-p'} \partial_\theta^{q-q'} ( u ) \partial_\psi^{p'} \partial_\theta^{q'} ( v ).
\end{equation*}
We thus get
\begin{align*}
\lVert uv \rVert_{K_\gamma^m(\Pi)}^2 & = \sum_{p+q=0}^m \lVert \psi^{p-\gamma} \partial_\psi^p \partial_\theta^q (uv) \rVert_{L^2(\Pi)}^2 \\
& \leq C \sum_{p+q=0}^m \sum_{p'=0}^p \sum_{q'=0}^q \lVert \psi^{p-\gamma} \partial_\psi^{p-p'} \partial_\theta^{q-q'} ( u ) \partial_\psi^{p'} \partial_\theta^{q'} ( v ) \rVert_{L^2(\Pi)}^2.
\end{align*}
To bound each of the terms above, we make use of the following estimate (from \ref{Morrey Kondratev})
\begin{equation*}
\lvert \partial_\psi^p \partial_\theta^q u \rvert \leq C \psi^{\gamma-1/2-p} \lVert u \rVert_{K_\gamma^m(\Pi)} \leq C \psi^{-p} \lVert u \rVert_{K_\gamma^m(\Pi)},
\end{equation*}
which holds for $m - (p+q) > 1$, $\gamma > 1/2$. 

First we consider the case when $m-(p-p'+q-q') > 1$.  Then the previous estimate yields $\lvert \partial_\psi^{p-p'} \partial_\theta^{q-q'} u \rvert \leq C \psi^{p'-p} \lVert u \rVert_{K_\gamma^m(\Pi)}$ which then gives
\begin{align*}
\lVert \psi^{p-\gamma} \partial_\psi^{p-p'} \partial_\theta^{q-q'} ( u ) \partial_\psi^{p'} \partial_\theta^{q'} ( v ) \rVert_{L^2(\Pi)} & \leq C \lVert u \rVert_{K_\gamma^m(\Pi)} \lVert \psi^{p'-\gamma} \partial_\psi^{p'} \partial_\theta^{q'} ( v ) \rVert_{L^2(\Pi)}\\ & \leq C \lVert u \rVert_{K_\gamma^m(\Pi)} \lVert v \rVert_{K_\gamma^m(\Pi)}.
\end{align*}
The analogous argument holds if $m - (p'+q') > 1$. It thus remains to consider the case when both $m - (p-p'+q-q') \leq 1$ and $ m -(p'+q') \leq 1$. If we sum these two inequalities we get $2m - (p+q) \leq 2$. Rearranging gives $2m-2 \leq p+q$.  But $p+q \leq m$. We conclude that $2m-2 \leq m$ and so $m \leq 2$. Since we must have $m>1$, this leaves only $m=2$. Thus for $m > 2$, the statement is proven. In the case when $m=2$, the only terms where the above arguments don't apply are
\begin{equation*}
\lVert \psi^{2-\gamma} (\partial_\psi u ) (\partial_\psi v) \rVert_{L^2(\Pi)}, \quad \lVert \psi^{1-\gamma} (\partial_\psi u ) (\partial_\theta v) \rVert_{L^2(\Pi)}, \quad \lVert \psi^{-\gamma} (\partial_\theta u ) (\partial_\theta v) \rVert_{L^2(\Pi)}.
\end{equation*}
We now apply H\"{o}lder's inequality to get the following three inequalities:
\begin{align*}
& \lVert \psi^{2-\gamma} (\partial_\psi u ) (\partial_\psi v) \rVert_{L^2}  = \lVert ( \psi^{1-\frac{\gamma}{2}} \partial_\psi u ) ( \psi^{1-\frac{\gamma}{2}} \partial_\psi v ) \rVert_{L^2}   \leq \lVert \psi^{1-\frac{\gamma}{2}} \partial_\psi u \rVert_{L^4}\lVert \psi^{1-\frac{\gamma}{2}} \partial_\psi v \rVert_{L^4}, \\
& \lVert \psi^{1-\gamma} (\partial_\psi u ) (\partial_\theta v) \rVert_{L^2(\Pi)}  = \lVert ( \psi^{1-\frac{\gamma}{2}} \partial_\psi u ) ( \psi^{-\frac{\gamma}{2}} \partial_\theta v ) \rVert_{L^2}   \leq \lVert \psi^{1-\frac{\gamma}{2}} \partial_\psi u \rVert_{L^4}\lVert \psi^{-\frac{\gamma}{2}} \partial_\theta v \rVert_{L^4}, \\
& \lVert \psi^{-\gamma} (\partial_\theta u ) (\partial_\theta v) \rVert_{L^2} = \lVert ( \psi^{-\frac{\gamma}{2}} \partial_\theta u ) ( \psi^{-\frac{\gamma}{2}} \partial_\theta v ) \rVert_{L^2} \leq \lVert \psi^{-\frac{\gamma}{2}} \partial_\theta u \rVert_{L^4}\lVert \psi^{-\frac{\gamma}{2}} \partial_\theta v \rVert_{L^4}.
\end{align*}
Notice, function $\psi^{1-\gamma/2}\partial_\psi u$ and $\psi^{-\gamma/2} \partial_\theta u$ belong to $K_{\gamma/2}^1(\Pi)$. By \ref{Embedding Kondratev}, these functions belong to $L^4(\Pi)$ when $\frac{\gamma}{2} > \frac{1}{2} - \frac{1}{4}$, which is precisely when $\gamma > 1/2$. We thus get the estimates:
\begin{gather*}
\lVert \psi^{2-\gamma} (\partial_\psi u ) (\partial_\psi v) \rVert_{L^2(\Pi)} \leq C \lVert u \rVert_{K_\gamma^2(\Pi)}  \lVert v \rVert_{K_\gamma^2(\Pi)}, \\
\lVert \psi^{1-\gamma} (\partial_\psi u ) (\partial_\theta v) \rVert_{L^2(\Pi)} \leq C \lVert u \rVert_{K_\gamma^2(\Pi)}  \lVert v \rVert_{K_\gamma^2(\Pi)}, \\
\lVert \psi^{-\gamma} (\partial_\theta u ) (\partial_\theta v) \rVert_{L^2(\Pi)} \leq C \lVert u \rVert_{K_\gamma^2(\Pi)}  \lVert v \rVert_{K_\gamma^2(\Pi)}.
\end{gather*}
This proves the $m=2$ case and thus concludes the proof of the proposition and establishes the bound $\lVert u v \rVert_{K_\gamma^m(\Pi)} \leq C \lVert u \rVert_{K_\gamma^m(\Pi)} \lVert v \rVert_{K_\gamma^m(\Pi)}$.
\end{proof}

\begin{lemma}\ \\
Let $m > 1/2$. Given $\xi(\theta) \in H^m(\T)$ and $u(\psi,\theta) \in K_\gamma^m(\Pi)$, then $\xi u \in K_\gamma^m(\Pi)$ with $\lVert \xi u \rVert_{K_\gamma^m(\Pi)} \leq C \lVert \xi \rVert_{H^m(\T)} \lVert u \rVert_{K_\gamma^m(\Pi)}$.
\end{lemma}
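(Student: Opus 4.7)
The plan is to reduce the estimate to the classical fibrewise Sobolev multiplication fact on $\T$: for $m > 1/2$ and $0 \leq k \leq m$, one has $\lVert \xi v \rVert_{H^k(\T)} \leq C \lVert \xi \rVert_{H^m(\T)} \lVert v \rVert_{H^k(\T)}$. The structural reason this works is that $\xi$ depends only on $\theta$, so $\partial_\psi^p(\xi u) = \xi \cdot \partial_\psi^p u$ with no $\psi$-derivatives ever landing on $\xi$. The Leibniz rule only redistributes $\theta$-derivatives between the two factors.

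First I would rewrite the $K_\gamma^m(\Pi)$ norm in the equivalent anisotropic form
$$
\lVert u \rVert_{K_\gamma^m(\Pi)}^2 \sim \sum_{p=0}^m \int_0^1 \psi^{2(p-\gamma)} \, \lVert \partial_\psi^p u(\psi,\cdot)\rVert_{H^{m-p}(\T)}^2 \, d\psi,
$$
obtained by collecting, for each fixed $p$, all $\theta$-derivatives of order $q \leq m-p$ into the $H^{m-p}(\T)$ norm pointwise in $\psi$, before integrating in $\psi$. Plugging $\xi u$ into this equivalent norm and using $\partial_\psi^p(\xi u) = \xi \cdot \partial_\psi^p u$ converts the claim into the fibrewise estimate
$$
\lVert \xi \cdot \partial_\psi^p u(\psi,\cdot) \rVert_{H^{m-p}(\T)} \leq C \lVert \xi \rVert_{H^m(\T)} \, \lVert \partial_\psi^p u(\psi,\cdot) \rVert_{H^{m-p}(\T)},
$$
which, after multiplication by $\psi^{2(p-\gamma)}$, integration in $\psi$, and summation over $0 \leq p \leq m$, produces exactly the required bound.

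The pointwise-in-$\psi$ inequality is a standard instance of the Sobolev product embedding $H^{m_1}(\T) \cdot H^{m_2}(\T) \hookrightarrow H^{m_3}(\T)$, valid whenever $m_1 + m_2 > m_3 + 1/2$ and $m_3 \leq \min(m_1, m_2)$; I would apply it with $m_1 = m$ and $m_2 = m_3 = m-p$, which needs only the hypothesis $m > 1/2$. The one point that needs care is that $m$, and hence $m-p$, may be non-integer, so one should invoke the Slobodeckij characterization of fractional Sobolev spaces rather than a naive Leibniz-plus-H\"older argument; this is classical, so I expect the main obstacle to be essentially bureaucratic rather than conceptual, namely justifying the anisotropic norm equivalence and the fractional multiplication estimate with enough uniformity to allow the $\psi$-integration.
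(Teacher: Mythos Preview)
Your proposal is correct and takes a genuinely different route from the paper. The paper expands $\partial_\psi^p \partial_\theta^q(\xi u)$ by Leibniz in $\theta$, bounds each factor $D^{q-q'}\xi$ with $q-q'<m$ in $L^\infty$ via the Sobolev embedding $H^m(\T)\hookrightarrow C(\T)$, and then treats the single exceptional term $\psi^{-\gamma}(D^m\xi)\,u$ (where all $\theta$-derivatives land on $\xi$) by invoking a separate auxiliary lemma asserting that $\theta\mapsto\lVert\psi^{-\gamma}u(\cdot,\theta)\rVert_{L^2[0,1]}$ is continuous and bounded by $C\lVert u\rVert_{K_\gamma^1(\Pi)}$. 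Your argument bypasses this case split entirely: grouping by the number $p$ of $\psi$-derivatives and recognizing the remaining $\theta$-structure as an $H^{m-p}(\T)$ norm reduces everything to the standard module property $H^m(\T)\cdot H^k(\T)\hookrightarrow H^k(\T)$ for $0\le k\le m$, $m>1/2$, applied pointwise in $\psi$. This is more conceptual and uniform; the paper's version is more elementary in that it only uses $L^\infty$ bounds plus one direct $L^2$ computation, at the price of the extra lemma for the edge case. One minor remark: in this paper the $K_\gamma^m$ norm is defined via a finite sum over integer multi-indices, so $m$ is implicitly an integer; your anisotropic rewriting is then an exact identity rather than an equivalence, and your caveat about fractional Slobodeckij norms is not needed here.
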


\begin{proof}
By product rule, we have
\begin{align*}
\lVert \xi u \rVert_{K_\gamma^m(\Pi)}^2 &  = \sum_{p+q=0}^m \lVert \psi^{p-\gamma} \partial_\psi^p \partial_\theta^q ( \xi u) \rVert_{L^2(\Pi)}^2 \\ &  \leq C \sum_{p+q=0}^m \sum_{q'=0}^q \lVert \psi^{p-\gamma} \big(D^{q-q'} \xi \big) \big(\partial_\psi^p \partial_\theta^{q'} u \big) \rVert_{L^2(\Pi)}^2.
\end{align*}
For $m> 1/2$, because $\xi(\theta) \in H^m(\T)$, we have $\lvert D^{q-q'} \xi \rvert \leq C \lVert \xi \rVert_{H^m(\T)}$ when $q-q' < m$. In this case, we immediately get
\begin{align*}
\lVert \psi^{p-\gamma} \big(D^{q-q'} \xi \big) \big(\partial_\psi^p \partial_\theta^{q'} u \big) \rVert_{L^2(\Pi)} & \leq C \lVert D^{q-q'}\xi \rVert_{\infty} \lVert \psi^{p-\gamma} \partial_\psi^p \partial_\theta^{q'} u \rVert_{L^2(\Pi)} \\ & \leq C \lVert \xi \rVert_{H^m(\T)} \lVert u \rVert_{K_\gamma^m(\Pi)}.
\end{align*}
In the case when $q-q' = m$, that is $p=0$, $q=m$, $q'=0$, by \ref{bound on I} (since $m \geq 1$), we get
\begin{align*}
\lVert \psi^{-\gamma} (D^m \xi) u \rVert_{L^2(\Pi)} & =  \Big( \int_\T \lvert D^m \xi \rvert^2 \big( \int_0^1 \lvert \psi^{-\gamma} u \rvert^2 \dd{\psi} \big) \dd{\theta} \Big)^{1/2}\\
& \leq \lVert I \rVert_\infty \lVert D^m \xi \rVert_{L^2(\T)}  \\ 
& \leq C \lVert \xi \rVert_{H^m(\T)} \lVert u \rVert_{K_\gamma^m(\Pi)}.
\end{align*}
Thus we have shown $\lVert \xi u \rVert_{K_\gamma^m(\Pi)} \leq C \lVert \xi \rVert_{H^m(\T)} \lVert u \rVert_{K_\gamma^m(\Pi)}$.
\end{proof}

\begin{proposition}\ \\
Ley $m > 1$ and $\gamma > 1/2$. Then $J_{0, \gamma}^{m, \sigma}(\Pi)$ is an algebra with $\lVert u v \rVert_{J_{0, \gamma}^{m, \sigma}(\Pi)} \leq C \lVert u \rVert_{J_{0, \gamma}^m(\Pi)} \lVert v \rVert_{J_{0, \gamma}^{m, \sigma}(\Pi)}.$
\end{proposition}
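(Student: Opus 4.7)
The plan is to first establish the algebra property for the real-variable space $J_{0,\gamma}^m(\Pi)$ and then upgrade to the partially-analytic space $J_{0,\gamma}^{m,\sigma}(\Pi)$ by a direct boundary-value argument, relying on the fact that the $\sigma$-norm is built from two copies of the real-variable norm on shifted circles.

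For the real-variable step, I would use the direct sum decomposition $J_{0,\gamma}^m(\Pi) \cong H^m(\T) \oplus K_\gamma^m(\Pi)$. Writing $u = v_1(\theta) + w_1(\psi,\theta)$ and $v = v_2(\theta) + w_2(\psi,\theta)$ with $v_i \in H^m(\T)$ and $w_i \in K_\gamma^m(\Pi)$, the product expands as
\begin{equation*}
uv = v_1 v_2 + v_1 w_2 + v_2 w_1 + w_1 w_2,
\end{equation*}
and I would check term by term that the expansion respects the same decomposition. The pure angular term $v_1 v_2$ lies in $H^m(\T)$ because $m > 1 > 1/2$ makes $H^m(\T)$ an algebra. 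The two mixed terms $v_i w_j$ lie in $K_\gamma^m(\Pi)$ by the preceding lemma on multiplication by $H^m(\T)$ functions. The pure remainder term $w_1 w_2$ lies in $K_\gamma^m(\Pi)$ by Lemma \ref{KondratevAlgebra}, where the hypotheses $m > 1$ and $\gamma > 1/2$ are exactly what is needed. The norm bound $\lVert uv \rVert_{J_{0,\gamma}^m} \le C \lVert u \rVert_{J_{0,\gamma}^m} \lVert v \rVert_{J_{0,\gamma}^m}$ follows by assembling the four constituent estimates and using $\lVert u \rVert_{J_{0,\gamma}^m}^2 \simeq \lVert v_1 \rVert_{H^m}^2 + \lVert w_1 \rVert_{K_\gamma^m}^2$.

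For the $\sigma$-step, I use the second characterization of $J_{0,\gamma}^{m,\sigma}(\Pi)$ as the space of holomorphic $z \mapsto a(\cdot,z) : \T_\sigma \to J_{0,\gamma}^m[0,1]$. If $u$ and $v$ are both such extensions, then $(uv)(\psi,z) = u(\psi,z) v(\psi,z)$ is again holomorphic in $z$, and its boundary traces on $\T \pm i\sigma$ are pointwise products of the corresponding traces of $u$ and $v$. Applying the real-variable algebra estimate established above on each boundary circle separately gives
\begin{equation*}
\lVert uv \rVert_{J_{0,\gamma}^{m,\sigma}}^2 = \sum_{\pm} \lVert u(\cdot,\cdot \pm i\sigma) v(\cdot,\cdot \pm i\sigma) \rVert_{J_{0,\gamma}^m}^2 \le C^2 \sum_{\pm} \lVert u(\cdot,\cdot \pm i\sigma) \rVert_{J_{0,\gamma}^m}^2 \lVert v(\cdot,\cdot \pm i\sigma) \rVert_{J_{0,\gamma}^m}^2,
\end{equation*}
which is bounded by $C^2 \lVert u \rVert_{J_{0,\gamma}^{m,\sigma}}^2 \lVert v \rVert_{J_{0,\gamma}^{m,\sigma}}^2$ after majorizing by the full sum of boundary norms.

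I do not expect any substantial obstacle here: all of the analytical work has been done in the preceding sequence of lemmas (the Kondratev embedding, the auxiliary bound on $I(\theta)$, the $K_\gamma^m$ algebra property, and the $H^m(\T) \cdot K_\gamma^m(\Pi) \subset K_\gamma^m(\Pi)$ multiplication estimate), and the remaining task is purely a packaging argument combining these pieces along the direct-sum decomposition and the boundary-norm definition of the $\sigma$-spaces. The only point that needs a moment of care is verifying that the mixed-type products $v_i w_j$ and $v_1 v_2$ land in the correct summand of the decomposition, which is immediate from inspection.
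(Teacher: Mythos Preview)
Your proposal is correct and essentially mirrors the paper's proof. The real-variable step is identical: both use the decomposition $u = v_1 + w_1$, $v = v_2 + w_2$, expand the product into four terms, and invoke the $H^m(\T)$ algebra property, Lemma~\ref{KondratevAlgebra}, and the preceding $H^m(\T)\cdot K_\gamma^m(\Pi)\subset K_\gamma^m(\Pi)$ lemma. For the $\sigma$-upgrade the paper organizes things slightly differently---it keeps the decomposition and checks that $v_1v_2\in X_\sigma^m(\T)$ and the remaining pieces lie in $K_\gamma^{m,\sigma}(\Pi)$ by verifying holomorphicity of each Banach-valued map separately---whereas you apply the full real-variable $J_{0,\gamma}^m(\Pi)$ estimate directly on the two boundary circles; both routes amount to the same thing. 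One minor point: for your holomorphicity claim on the product you implicitly need that $J_{0,\gamma}^m[0,1]$ is itself an algebra (so that multiplication of Banach-valued holomorphic maps is continuous), which is the one-variable analogue of what you just proved and is used the same way in the paper's later Corollary~\ref{superposition}.
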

\begin{proof}
First, let $ u = v(\theta) + w(\psi,\theta)$, $\zeta = \xi(\theta) + \eta(\psi,\theta) \in J_{0,\gamma}^{m}(\Pi)$. This means $v, \xi \in H^m(\T)$ and $w, \eta \in K_\gamma^m(\Pi)$. Multiplying, we get $u\zeta = v\xi + v\eta + \xi w + w\eta$. The leading term $v\xi$ is in $H^m(\T)$ because this space is an algebra. The remaining terms belong to $K_\gamma^m(\Pi)$, by the preceding two propositions. Furthermore, from the bounds established previously and the definition of norm of $J_{0,\gamma}^m(\Pi)$, we have the bound
\begin{align*}
\lVert u \zeta \rVert_{J_{0,\gamma}^m(\Pi)}  & = \lVert v\xi \rVert_{H^m(\T)}^2 + \lVert v\eta + \xi w + w \eta \rVert_{K_{\gamma}^m(\Pi)} \\
& \leq  C \big( \lVert v \rVert_{H^m(\T)} \lVert \xi \rVert_{H^m(\T)} + \lVert v \rVert_{H^m(\T)} \lVert \eta \rVert_{K_\gamma^m(\Pi)} \\  &  \qquad+ \lVert \xi \rVert_{H^m(\T)}  \lVert w \rVert_{K_\gamma^m(\Pi)} +  \lVert w \rVert_{K_\gamma^m(\Pi)}  \lVert \eta \rVert_{K_\gamma^m(\Pi)} \big)\\ 
& \leq C \lVert u \rVert_{J_{0,\gamma}^m(\Pi)}\lVert \zeta \rVert_{J_{0,\gamma}^m(\Pi)}.
\end{align*}
This confirms $J_{0, \gamma}^m(\Pi)$ is an algebra. In the case when $u, \zeta \in J_{0,\gamma}^{m,\sigma}(\Pi)$, we have $v,\xi \in X_\sigma^m(\T)$ and $w,\eta \in K_\gamma^{m, \sigma}(\Pi)$. Then $v \xi \in X_\sigma^m(\T)$ because it is the product of two holomorphic functions in $\T_\sigma$ and so holomorphic itself, and since $H^m(\T)$ is an algebra, we get
\begin{align*}
& \lVert v \xi \rVert_{X_\sigma^m(\T)}  = \lVert v(\cdot+i\sigma) \xi(\cdot + i\sigma) \rVert_{H^m(\T)} + \lVert v(\cdot-i\sigma) \xi(\cdot - i\sigma) \rVert_{H^m(\T)} \\ 
& \leq C \lVert v(\cdot+i\sigma) \rVert_{H^m(\T)}\lVert \xi(\cdot + i\sigma) \rVert_{H^m(\T)} + C\lVert v(\cdot-i\sigma) \rVert_{H^m(\T)}\lVert \xi(\cdot - i\sigma) \rVert_{H^m(\T)} \\
& \leq C  \lVert v  \rVert_{X_\sigma^m(\T)}  \lVert \xi \rVert_{X_\sigma^m(\T)}.
\end{align*}
Next, if $\xi(\theta) \in X_\sigma^m(\T)$ and $w \in K_\gamma^{m,\sigma}(\Pi)$, then the map $\theta \to \xi(\theta) w(\cdot,\theta)$ is holomorphic as a map from $\T_\sigma$ to $K_\gamma^m(0,1]$, with bound $ \lVert \xi w \rVert_{K_\gamma^{m,\sigma}(\Pi)} \leq C \lVert \xi \rVert_{X_\sigma^m(\T)} \lVert w \rVert_{K_\gamma^{m,\sigma}(\Pi)}$, analogously obtained as above. Finally, the same argument holds for the product of $w,\eta \in K_\gamma^{m,\sigma}(\Pi)$. It is the product of holomorphic functions $\T_\sigma \to K_\gamma^m(0,1]$, the latter of which is an algebra and so this product is also holomorphic as a Banach valued map. The similar bound applies, namely that $ \lVert w \eta \rVert_{K_\gamma^{m,\sigma}(\Pi)} \leq C \lVert w \rVert_{K_\gamma^{m,\sigma}(\Pi)} \lVert \eta \rVert_{K_\gamma^{m,\sigma}(\Pi)} $. Putting it all together, we conclude that $J_{0,\gamma}^m(\Pi)$ is an algebra with $ \lVert u \zeta \rVert_{J_{0, \gamma}^{m,\sigma}(\Pi)} \leq C \lVert u \rVert_{J_{0,\gamma}^{m,\sigma}(\Pi)} \lVert \zeta \rVert_{J_{0, \gamma}^{m,\sigma}(\Pi)}$.
\end{proof}

From the previous result along with bilinearity of the multiplication map, one immediately obtains the following result.

\begin{corollary}\label{multiplication is analytic}\ \\
The map $ (u,v) \to uv : J_{0,\gamma}^{m,\sigma}(\Pi) \times J_{0,\gamma}^{m,\sigma}(\Pi) \to J_{0,\gamma}^{m,\sigma}(\Pi)$ is analytic for $m>1$, $\gamma> 1/2$.
\end{corollary}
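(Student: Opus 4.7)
The plan is to deduce analyticity from the fact that a bounded bilinear map between complex Banach spaces is automatically analytic. The pointwise product is bilinear by definition of multiplication of functions, so $M(u,v) = uv$ defines a bilinear map on $J_{0,\gamma}^{m,\sigma}(\Pi) \times J_{0,\gamma}^{m,\sigma}(\Pi)$. By the preceding proposition, under the hypotheses $m>1$ and $\gamma>1/2$, this map lands in $J_{0,\gamma}^{m,\sigma}(\Pi)$ and satisfies the continuity estimate
\begin{equation*}
\lVert M(u,v) \rVert_{J_{0,\gamma}^{m,\sigma}(\Pi)} \leq C \lVert u \rVert_{J_{0,\gamma}^{m,\sigma}(\Pi)} \lVert v \rVert_{J_{0,\gamma}^{m,\sigma}(\Pi)}.
\end{equation*}

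Next, I would expand $M$ around an arbitrary base point $(u_0,v_0)$. Bilinearity gives the exact identity
\begin{equation*}
M(u_0+h, v_0+k) = u_0 v_0 + (u_0 k + h v_0) + h k,
\end{equation*}
which is a finite polynomial in the increment $(h,k)$. The linear term $(h,k) \mapsto u_0 k + h v_0$ and the quadratic term $(h,k) \mapsto hk$ are both continuous multilinear maps by the algebra estimate above, so this expansion is precisely the Taylor series of $M$ at $(u_0,v_0)$ and it converges (indeed terminates) on the whole space. Hence $M$ is complex analytic on $J_{0,\gamma}^{m,\sigma}(\Pi) \times J_{0,\gamma}^{m,\sigma}(\Pi)$.

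There is no real obstacle here, as the nontrivial content has already been absorbed into the preceding proposition establishing the algebra property. The only point deserving care is that complex analyticity in the Banach sense is what is needed for the implicit function theorem, and this is ensured because the expansion above is a complex polynomial in $(h,k)$ with coefficients depending linearly (hence holomorphically) on $(u_0,v_0)$.
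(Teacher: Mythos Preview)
Your proposal is correct and matches the paper's approach exactly: the paper simply notes that the corollary follows ``from the previous result along with bilinearity of the multiplication map,'' and you have filled in the standard detail that a bounded bilinear map is a degree-two polynomial and hence analytic. There is nothing to add.
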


Having determined that multiplication in $J_{0,\gamma}^{m,\sigma}(\Pi)$ defines an analytic operator, we turn to the operator $u \to \frac{1}{u}$ on $J_{0,\gamma}^{m, \sigma}(\Pi)$. In fact, we consider the more general superposition operator $u \to f(u)$.

\begin{theorem} \label{main superposition}\ \\
Let $f \in C^{m+1}$ on a domain containing image of $u$. Then $u \to f(u) : J_{0,\gamma}^m(\Pi) \to J_{0, \gamma}^m(\Pi)$ is well defined and continuous for $m>1$, $\gamma > 1/2$. Furthermore, if $f \in C^{m+2}(\Omega)$ then this map is $C^1$.
\end{theorem}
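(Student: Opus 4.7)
The plan is to reduce the claim to the superposition result on $H^m(\T)$ (Proposition \ref{superposition sobolev}) combined with the algebra and module properties of $K_\gamma^m(\Pi)$ and $J_{0,\gamma}^m(\Pi)$ established in the preceding propositions. Write $u = v(\theta) + w(\psi,\theta) \in J_{0,\gamma}^m(\Pi)$. Since $m>1$ and $\gamma > 1/2$, Proposition \ref{Morrey Kondratev} gives $w|_{\psi=0}=0$, so $u|_{\psi=0} = v(\theta)$, and the natural splitting is
\begin{equation*}
f(u) \;=\; f(v) \;+\; \bigl[\,f(v+w) - f(v)\,\bigr].
\end{equation*}
The leading term $f(v) \in H^m(\T)$ by Proposition \ref{superposition sobolev} applied on the circle (valid since $f\in C^{m+1}\subset C^m$). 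The whole task reduces to showing that the remainder $r:=f(v+w)-f(v)$ lies in $K_\gamma^m(\Pi)$ with an estimate $\|r\|_{K_\gamma^m} \le P\bigl(\|u\|_{J_{0,\gamma}^m}\bigr)\,\|w\|_{K_\gamma^m}$ for some continuous $P$.

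To control $r$ I would use the integral identity
\begin{equation*}
r(\psi,\theta) \;=\; w(\psi,\theta)\, G(\psi,\theta), \qquad G(\psi,\theta) := \int_0^1 f'\bigl(v(\theta)+t\,w(\psi,\theta)\bigr)\,\dd{t},
\end{equation*}
and expand $\partial_\psi^p\partial_\theta^q r$ by Leibniz as a sum of products of mixed partials of $w$ and of $G$. Every term retains a factor coming from $w$, which enables the singular weight $\psi^{p-\gamma}$ to be absorbed into the $K_\gamma^m$-norm of $w$. The derivatives of $G$ are computed by Fa\`a di Bruno applied to $f'(v+tw)$: each resulting summand is a product of $f^{(j)}(v+tw)$ (uniformly bounded on the compact range of $u$ since $f\in C^{m+1}$ ensures $f^{(j)}$ is continuous for $j\le m+1$) with mixed partials of $v+tw$, which themselves lie in $J_{0,\gamma}^{m-k}(\Pi)$ or in appropriately weighted subspaces. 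Assembling these factors in the algebra $J_{0,\gamma}^m(\Pi)$ (together with the module property $H^m(\T)\cdot K_\gamma^m(\Pi)\subset K_\gamma^m(\Pi)$) and integrating in $t$ yields the required bound on $r$ in $K_\gamma^m(\Pi)$.

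For continuity, I would use the mean-value identity
\begin{equation*}
f(u) - f(\tilde u) \;=\; (u-\tilde u)\int_0^1 f'\bigl(\tilde u + s(u-\tilde u)\bigr)\,\dd{s},
\end{equation*}
apply the well-definedness part of the theorem just proven to $f'\in C^m$ (valid because $f\in C^{m+1}$) so that the $s$-integrand lies in $J_{0,\gamma}^m(\Pi)$ with norm controlled by $\|u\|,\|\tilde u\|$, and then invoke the algebra property of $J_{0,\gamma}^m(\Pi)$ to produce a locally Lipschitz bound. For the $C^1$ part with $f\in C^{m+2}$, the candidate Fr\'echet derivative at $u_0$ is $h\mapsto f'(u_0)\,h$, a bounded linear map by the algebra property; the error
\begin{equation*}
f(u_0+h) - f(u_0) - f'(u_0)h \;=\; h^2\int_0^1 (1-s)\,f''(u_0+sh)\,\dd{s}
\end{equation*}
is $O(\|h\|_{J_{0,\gamma}^m}^2)$ again by the algebra property, and continuity of $u_0\mapsto f'(u_0)$ into $J_{0,\gamma}^m$ follows from the continuity part applied to $f'\in C^{m+1}$.

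The main obstacle is the case $p=0$ in the estimate for $r$: there the weight is the full singularity $\psi^{-\gamma}$ with no $\psi$-derivative available to generate a vanishing factor. The rescue is precisely the factorization $r = wG$: since $w\in K_\gamma^m(\Pi)$ we have $\psi^{-\gamma}w \in L^2(\Pi)$ (and likewise for $\theta$-derivatives of $w$), so the burden transfers onto showing that $\partial_\theta^q G$ is uniformly bounded in $(\psi,\theta)$. This in turn requires a careful Fa\`a di Bruno accounting for $G$ using the Sobolev superposition on $\T$ together with $L^\infty$-bounds from $m>1$ and the algebra structure — the delicate point being that no more derivatives of $f$ than $C^{m+1}$ are used, which matches the sharp hypothesis of the theorem.
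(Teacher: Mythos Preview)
Your overall plan --- split $f(u)=f(v)+\bigl[f(v+w)-f(v)\bigr]$, treat $f(v)$ by Proposition~\ref{superposition sobolev}, and show the remainder lies in $K_\gamma^m(\Pi)$ --- is exactly the paper's structure. The difference is in how the remainder is estimated. You factor $r=wG$ with $G=\int_0^1 f'(v+tw)\,\dd t$ and aim to control $\partial_\psi^p\partial_\theta^q r$ by Leibniz, putting the weight $\psi^{-\gamma}$ on the $w$-factor. The paper instead expands $\partial_\psi^p\partial_\theta^q\bigl[f(\xi+v)-f(\xi)\bigr]$ directly via Fa\`a di Bruno, and for the delicate $p=0$ terms (those carrying only $\xi$-factors) uses the fundamental theorem of calculus \emph{in the $\psi$-variable}, $f^{(j)}(\xi+v)-f^{(j)}(\xi)=\int_0^\psi f^{(j+1)}(u)\,\partial_s v\,\dd s$, and then applies Hardy's inequality to trade the full weight $\psi^{-\gamma}$ for $\psi^{1-\gamma}\partial_\psi v$. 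The top-order $\theta$-derivative term (where $D^m\xi$ appears) is then handled via Lemma~\ref{bound on I}.

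Your stated justification for the $p=0$ step has a gap. You write that ``the burden transfers onto showing that $\partial_\theta^q G$ is uniformly bounded in $(\psi,\theta)$''. This is false for $q=m$: the Fa\`a di Bruno expansion of $\partial_\theta^m G$ contains $f''(v+tw)\,\partial_\theta^m(v+tw)$, and neither $D^m v\in L^2(\T)$ nor $\partial_\theta^m w$ is in $L^\infty$. (For $m=2$ the same failure already occurs at $q=1$, since the pointwise Kondratev bound on $\partial_\theta w$ requires $m-1>1$.) Consequently the Leibniz term $(\psi^{-\gamma}w)\cdot\partial_\theta^m G$ cannot be estimated by $L^2\times L^\infty$. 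It \emph{can} be salvaged --- for the $D^m v$ piece by Lemma~\ref{bound on I} exactly as in the paper, and for the $\partial_\theta^m w$ piece by using $|w|\le C\psi^{\gamma-1/2}\lVert w\rVert_{K_\gamma^m}$ together with $\psi^{-\gamma}\partial_\theta^m w\in L^2$ --- but then you are doing the same term-by-term case analysis the paper carries out, and your factorization $r=wG$ no longer buys you a shortcut. In short: the route is viable, but the sentence you flag as ``the delicate point'' is where the argument as written breaks, and the repair is precisely the paper's machinery (Hardy in $\psi$ and Lemma~\ref{bound on I}), not a uniform $L^\infty$ bound on $G$.
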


\begin{proof}
Given $u(\psi,\theta) \in J_{0,\gamma}^m(\Pi)$, we write $u(\psi,\theta) = \xi(\theta) + v(\psi,\theta)$, where $\xi \in H^m(\T)$ and $v \in K_\gamma^m(\Pi)$. Recall that for $m>1$, $\gamma > 1/2$, $u$ is continuous and $v \to 0$ as $\psi \to 0^+$. In other words, $\xi$ defines the behaviour of $u$ along $\psi=0$, so we write $u(0,\theta) = \xi(\theta)$. By continuity of $f$, we have $f(\xi+v) \to f(\xi)$ as $\psi \to 0^+$. So the behaviour of $f(u)$ at $\psi=0$ is defined by $f(\xi)$. We thus have the decomposition  $f(\xi + v) = f(\xi) + f(\xi + v) - f(\xi)$. Since $\xi \in H^m(\T)$, then by \ref{superposition sobolev}, $f(\xi) \in H^m(\T)$. This forms the leading term of $f(u) \in J_{0,\gamma}^m(\Pi)$. The main task then is to prove that the remainder term,  $f(\xi + v) - f(\xi)$, belongs to $K_\gamma^m(\Pi)$. Intuitively, this means that this term vanishes as $\psi \to 0^+$ at the same rate as $v$ does. We must bound
\begin{equation*}
\lVert f(\xi + v) - f(\xi) \rVert_{K_\gamma^m(\Pi)}^2 = \sum_{p+q=0}^m \lVert \psi^{p-\gamma} \partial_\psi^p \partial_\theta^q \big( f(\xi + v) - f(\xi) \big) \rVert_{L^2(\Pi)}^2.
\end{equation*}
To start, given a composition $f\big(u(\psi,\theta)\big)$, we have the following expressions for its partial derivatives:
\begin{equation*}
\partial_\psi^p \partial_\theta^q f(u) = \sum_{j=1}^{p+q} \sum_{\substack{\alpha_1 + \cdot \cdot \cdot + \alpha_j = p \\ \beta_1 + \cdot \cdot \cdot + \beta_j = q \\ \alpha_i + \beta_i \geq 1 }} C_{\alpha_1, ... , \alpha_j}^{\beta_1, ... , \beta_j} \big( \partial_\psi^{\alpha_1} \partial_\theta^{\beta_1} u \big) \cdot \cdot \cdot \big( \partial_\psi^{\alpha_j} \partial_\theta^{\beta_j} u \big) f^{(j)}(u).
\end{equation*}
If $p\geq 1$, then $\partial_\psi^p \partial_\theta^q f(\xi) = 0$. In this case, with $u = \xi(\theta) + v(\psi,\theta)$, we obtain:
\begin{equation*}
\partial_\psi^p \partial_\theta^q f(\xi+v) = \sum_{j=1}^{p+q} \sum_{\substack{\alpha_1 + \cdot \cdot \cdot + \alpha_j = p \\ \beta_1 + \cdot \cdot \cdot + \beta_j = q \\ \alpha_i + \beta_i \geq 1 }} \sum_{\text{or} }C_{\alpha_1, ... , \alpha_j}^{\beta_1, ... , \beta_j} \big( \partial_\psi^{\alpha_1} \partial_\theta^{\beta_1} \xi \text{ or } v \big) \cdot \cdot \cdot \big( \partial_\psi^{\alpha_j} \partial_\theta^{\beta_j} \xi \text { or } v \big) f^{(j)}(\xi+v),
\end{equation*}
where the summation over `or' indicates we sum over all choices of $\xi$ or $v$ in the above factors. Note though, since $p \geq 1 $, at least some $\alpha_i \neq 0$ and thus the case when all factors choose $\xi$ vanishes. If on the other hand $p=0$, then we instead obtain the expression
\begin{align*}
& \partial_\theta^q \Big(f(\xi+v) - f(\xi) \Big) \\ &= \sum_{j=1}^q \sum_{\substack{\alpha_1+ \cdot \cdot \cdot + \alpha_j =q \\ \alpha_i \geq 1}} C_{\alpha_1, ..., \alpha_j} \Bigg[  \Big( f^{(j)}(\xi+v) - f^{(j)}(\xi) \Big) \left( D^{\alpha_1} \xi \right) \cdot \cdot \cdot \left( D^{\alpha_j} \xi \right) \\
& \quad + \sum_{\text{or}} f^{(j)}(\xi+v) \left( D^{\alpha_1} \xi \text{ or } v \right) \cdot \cdot \cdot \left( D^{\alpha_j} \xi \text{ or } v \right) \Bigg]
\end{align*}
where summation over `or' excludes case when all factors choose $\xi$.

Let us now start with bounds for the case $p \geq 1$. Namely, for $1 \leq j \leq p+q \leq m$, $\alpha_1 + \cdot \cdot \cdot + \alpha_j = p$, $\beta_1 + \cdot \cdot \cdot + \beta_j = q$, $\alpha_i + \beta_i \geq 1$, we must bound 
\begin{equation*}
A = \lVert \psi^{p-\gamma} \big( \partial_\psi^{\alpha_1} \partial_\theta^{\beta_1} \xi \text{ or } v \big) \cdot \cdot \cdot \big( \partial_\psi^{\alpha_j} \partial_\theta^{\beta_j} \xi \text { or } v \big) f^{(j)}(\xi+v) \rVert_{L^2(\Pi)}.
\end{equation*}
Since $f \in C^{m+1}(\Omega)$, we immediately get
\begin{equation*}
A \leq  \lVert f^{(j)} \rVert_\infty \lVert \psi^{p-\gamma} \big( \partial_\psi^{\alpha_1} \partial_\theta^{\beta_1} \xi \text{ or } v \big) \cdot \cdot \cdot \big( \partial_\psi^{\alpha_j} \partial_\theta^{\beta_j} \xi \text { or } v \big) \rVert_{L^2(\Pi)}.
\end{equation*}
Next, $\xi \in H^m(\T)$ and thus $\partial_\theta^{\beta_i} \xi \in C(\T)$ unless $\beta_i = m$. This occurs only if $q=m$ and thus $p=0$, which is outside of the current case $p \geq 1$. Thus we can assume all $\beta_i \leq m$ and so each factor $\partial_\theta^{\beta_i} \xi$ is continuous, and thus can be factored out of the norm. There are $j$ factors of form $\partial_\psi^{\alpha_j} \partial_\theta^{\beta_j} (\xi \text { or } v)$, but at most $j-1$ of them choose $\xi$. We thus get
\begin{equation*}
A \leq  \lVert f^{(j)} \rVert_\infty \lVert \xi \rVert_{H^m(\T)}^{\lambda}\lVert \psi^{p-\gamma} \underbrace{\big( \partial_\psi^{\alpha_1} \partial_\theta^{\beta_1} v \big) \cdot \cdot \cdot \big( \partial_\psi^{\alpha_j} \partial_\theta^{\beta_j} v \big)}_\text{ $\lambda$ terms missing} \rVert_{L^2(\Pi)},
\end{equation*}
where $0 \leq \lambda \leq j-1$. Next, if $m-(\alpha_i + \beta_i) > 1$, then
\begin{equation*}
\lvert \partial_\psi^{\alpha_i} \partial_\theta^{\beta_i} v \rvert \leq C \psi^{\gamma-1/2-\alpha_i} \lVert v \rVert_{K_\gamma^m(\Pi)} \leq C \psi^{-\alpha_i} \lVert v \rVert_{K_\gamma^m(\Pi)},
\end{equation*}
since $\gamma > 1/2$. This condition is not satisfied only when $\alpha_i + \beta_i = m-1$ or $m$. First suppose $\alpha_i + \beta_i = m$. Then $j=1$ and $\alpha_1 = p$ and immediately we get
\begin{equation*}
A \leq \lVert f^{(j)} \rVert_\infty \lVert \psi^{p-\gamma}  \partial_\psi^{\alpha_1} \partial_\theta^{\beta_1} v \rVert_{L^2(\Pi)} \leq \lVert f^{(j)} \rVert_\infty \lVert v \rVert_{K_\gamma^m(\Pi)} \leq \lVert f^{(j)} \rVert_\infty \lVert u \rVert_{J_{0,\gamma}^m(\Pi)}
\end{equation*}
Next, assume without loss of generality that $\alpha_1 + \beta_1 = m-1$. Then either $j=1$, and the same estimate as above holds, or $j=2$. Either $\lambda = 1$ and immediately 
\begin{align*}
A \leq \lVert f^{(j)} \rVert_\infty \lVert \xi \rVert_{H^m(\T)} \lVert \psi^{p-\gamma}  \partial_\psi^{\alpha_1} \partial_\theta^{\beta_1} v \rVert_{L^2(\Pi)} & \leq \lVert f^{(j)}   \rVert_\infty \lVert \xi \rVert_{H^m(\T)} \lVert v \rVert_{K_\gamma^m(\Pi)} \\ & \leq \lVert f^{(j)} \rVert_\infty \lVert u \rVert_{J_{0,\gamma}^m(\Pi)}^2,
\end{align*}
or $\lambda =0$. Necessarily $(\alpha_2 , \beta_2) = (1,0)$ or $(0,1)$. If $m- (\alpha_2 + \beta_2) =  m-1 > 1$, then
\begin{equation*}
\lvert \partial_\psi^{\alpha_2} \partial_\theta^{\beta_2} v \rvert \leq C \psi^{\gamma-1/2-\alpha_2} \lVert v \rVert_{K_\gamma^m(\Pi)} \leq C \psi^{-\alpha_2} \lVert v \rVert_{K_\gamma^m(\Pi)}
\end{equation*}
and so
\begin{align*}
A & \leq  \lVert f^{(j)} \rVert_\infty \lVert \psi^{\alpha_1+\alpha_2-\gamma} \big( \partial_\psi^{\alpha_1} \partial_\theta^{\beta_1} v \big) \big( \partial_\psi^{\alpha_2} \partial_\theta^{\beta_2} v \big) \rVert_{L^2(\Pi)} \\
& \leq  \lVert f^{(j)} \rVert_\infty \lVert v \rVert_{K_\gamma^m(\Pi)} \lVert \psi^{\alpha_1-\gamma} \big( \partial_\psi^{\alpha_1} \partial_\theta^{\beta_1} v \big) \rVert_{L^2(\Pi)} \\
& \leq  \lVert f^{(j)} \rVert_\infty \lVert u \rVert_{J_{0, \gamma}^m(\Pi)}^2.
\end{align*}
If on the other hand $m-{\alpha_2+\beta_2} = m-1 \leq 1$, then $m \leq 2$, so $ m=2$, and we have $A = \lVert \psi^{2-\gamma} ( \partial_\psi v) (\partial_\psi v) f^{(j)}(u) \rVert_{L^2(\Pi)}$ or $A = \lVert \psi^{1-\gamma} ( \partial_\psi v) (\partial_\theta v) f^{(j)}(u) \rVert_{L^2(\Pi)}$. We have seen in \ref{KondratevAlgebra}, how to bound this expression using H\"{o}lder's inequality and embeddings of $\psi^{-\frac{\gamma}{2}} \partial_\theta v$, $\psi^{1-\frac{\gamma}{2}} \partial_\psi v$ into $L^4(\Pi)$ bu \ref{Embedding Kondratev}. This gives $A \leq  \lVert f^{(j)} \rVert_\infty \lVert u \rVert_{J_{0, \gamma}^m(\Pi)}^2$. 

Finally, in the case when $\alpha_i + \beta_i < m-1$, applying the point-wise estimate $\lvert \partial_\psi^{\alpha_i} \partial_\theta^{\beta_i} v \rvert \leq C \psi^{-\alpha_i} \lVert v \rVert_{K_\gamma^m(\Pi)}$ to all but one factor in the expression
\begin{align*}
A & \leq  \lVert f^{(j)} \rVert_\infty \lVert \xi \rVert_{H^m(\T)}^{\lambda}\lVert \psi^{p-\gamma} \underbrace{\big( \partial_\psi^{\alpha_1} \partial_\theta^{\beta_1} v \big) \cdot \cdot \cdot \big( \partial_\psi^{\alpha_j} \partial_\theta^{\beta_j} v \big)}_\text{ $\lambda$ terms missing} \rVert_{L^2(\Pi)} \\
&\leq  C \lVert f^{(j)} \rVert_\infty \lVert \xi \rVert_{H^m(\T)}^{\lambda} \lVert v \rVert_{K_\gamma^m(\Pi)}^{j-\lambda-1 }\lVert \psi^{\alpha_i-\gamma}  \partial_\psi^{\alpha_i} \partial_\theta^{\beta_i} v \rVert_{L^2(\Pi)} \\
& \leq C \lVert f^{(j)} \rVert_\infty \lVert u \rVert_{J_{0,\gamma}^m(\Pi)}^{j}.
\end{align*}
This concludes the case when $p \geq 1$, where we have found each term of $\lVert \psi^{p-\gamma} \partial_\psi^p \partial_\theta^q \big( f(\xi + v) - f(\xi) \big) \rVert_{L^2(\Pi)}$ is bounded by $C \lVert f^{(j)} \rVert_\infty \lVert u \rVert_{J_{0,\gamma}^m(\Pi)}^j$, for $1 \leq j \leq p+q \leq m$. Thus for $p \geq 1$ we can write
\begin{equation*}
\lVert \psi^{p-\gamma} \partial_\psi^p \partial_\theta^q \big( f(\xi + v) - f(\xi) \big) \rVert_{L^2(\Pi)} \leq C \lVert f \rVert_{C^m} \big( \lVert u \rVert_{J_{0,\gamma}^m(\Pi)}+  \lVert u \rVert_{J_{0,\gamma}^m(\Pi)}^m \big).
\end{equation*}

Now we consider the case when $p=0$. Recall, $\partial_\theta^q \big(f(\xi+v) - f(\xi) \big)$ is a sum of 
\begin{equation*}
\sum_{j=1}^q \sum_{\substack{\alpha_1+ \cdot \cdot \cdot + \alpha_j =q \\ \alpha_i \geq 1}} C_{\alpha_1, ..., \alpha_j}  \big( f^{(j)}(\xi+v) - f^{(j)}(\xi) \big) \left( D^{\alpha_1} \xi \right) \cdot \cdot \cdot \left( D^{\alpha_j} \xi \right)
\end{equation*}
and
\begin{equation*}
\sum_{j=1}^q \sum_{\substack{\alpha_1+ \cdot \cdot \cdot + \alpha_j =q \\ \alpha_i \geq 1}} C_{\alpha_1, ..., \alpha_j} \sum_{\text{or}} f^{j}(\xi+v) \left( D^{\alpha_1} \xi \text{ or } v \right) \cdot \cdot \cdot \left( D^{\alpha_j} \xi \text{ or } v \right).
\end{equation*}
Bounding the latter is identical to the previous case of $p \geq 1$. So we have only the first part to bound. That is, for $1 \leq j \leq q \leq m$, $\alpha_1 + \cdot \cdot \cdot + \alpha_j = q$, $\alpha_i \geq 1$, we must bound 
\begin{equation*}
B = \lVert \psi^{-\gamma} \big( f^{(j)}(\xi+v) - f^{(j)}(\xi) \big) \left( D^{\alpha_1} \xi \right) \cdot \cdot \cdot \left( D^{\alpha_j} \xi \right) \rVert_{L^2(\Pi)}.
\end{equation*}
We use the fundamental theorem of calculus to write
\begin{equation*}
 f^{(j)}(\xi+v) - f^{(j)}(\xi) = \int_0^\psi f^{(j+1)}(u) \partial_t v(t,\theta) \dd{t}.
\end{equation*}
Now we apply Hardy's inequality. Set $-\gamma = \alpha-1$. Then $\alpha= 1 - \gamma < 1/2$ since $\gamma > 1/2$. Also set  $f^{(j+1)}(u) \partial_t v(t,\theta) = t^{-\alpha} g(t,\theta)$. We get
\begin{equation*}
B \leq C \lVert \psi^{1-\gamma} f^{(j+1)}(u) \left( \partial_\psi v \right) \left( D^{\alpha_1} \xi \right) \cdot \cdot \cdot \left( D^{\alpha_j} \xi \right) \rVert_{L^2(\Pi)} 
\end{equation*}
Now suppose each $\alpha_i < m$. Then each factor $D^{\alpha_i} \xi$ is continuous and thus
\begin{equation*}
B \leq C \lVert f^{(j+1)} \rVert_{\infty} \lVert \xi \rVert_{H^m(\T)}^j \lVert \psi^{1-\gamma} \left( \partial_\psi v \right)  \rVert_{L^2(\Pi)} \leq C \lVert f^{(j+1)} \rVert_{\infty} \lVert u \rVert_{J_{0,\gamma}^m(\Pi)}^{j+1}.
\end{equation*}
If on the other hand $\alpha_1 = m$, then necessarily $j=1$. We have $\psi v_\psi \in K_\gamma^{m-1}(\Pi)$ and $m -1 \geq 1$, thus by \ref{bound on I}, $\lVert \psi^{1-\gamma} \partial_\psi v(\psi,\theta) \rVert_{L^2[0,1]}$ is continuous with respect to $\theta$. We thus get
\begin{align*}
B & \leq \lVert f^{(j+1)} \rVert_\infty \lVert \psi^{1-\gamma} \left( \partial_\psi v \right) \left(D^m\xi \right) \rVert_{L^2(\Pi)} \\
& \leq \lVert f^{(j+1)} \rVert_\infty \big( \int_\T \lvert D^m \xi \rvert^2 \int_0^1 \lvert \psi^{1-\gamma} \partial_\psi v \rvert^2 \dd{\psi} \dd{\theta} \big)^{1/2}\\
& \leq C \lVert f^{(j+1)} \rVert_\infty \lVert v \rVert_{K_\gamma^m(\Pi)} \lVert D^m \xi \rVert_{L^2(\T)} \\
& \leq C \lVert f^{(j+1)} \rVert_\infty \lVert u \rVert_{J_{0, \gamma}^m(\Pi)}^2 .
\end{align*}
We have thus established the bounds for $p=0$ case. Together with the prior $p \geq 1$ case, we get
\begin{equation*}
\lVert f(\xi + v) - f(\xi) \rVert_{K_\gamma^m(\Pi)} \leq C \lVert f  \rVert_{C^{m+1}} \left(  \lVert u \rVert_{J_{0, \gamma}^m(\Pi)} +  \lVert u \rVert_{J_{0, \gamma}^m(\Pi)}^{m+1} \right).
\end{equation*}
Combining this with bound
\begin{equation*}
\lVert f(\xi) \rVert_{H^m(\T)} \leq C \lVert f \rVert_{C^m} \left(1 +  \lVert \xi \rVert_{H^m(\T)}^m \right)
\end{equation*}
from \ref{superposition sobolev}, we find our desired bound
\begin{align*}
\lVert f(u) \rVert_{J_{0,\gamma}^m(\Pi)} & = \lVert f(\xi) \rVert_{H^m(\T)} + \lVert f(\xi+v) - f(\xi) \rVert_{K_\gamma^m(\Pi)}\\
& \leq C \lVert f \rVert_{C^{m+1}} \left(  1 +  \lVert u \rVert_{J_{0, \gamma}^m(\Pi)}^{m+1} \right).
\end{align*}

Fr\'{e}chet differentiability follows because $J_{0,\gamma}^m(\Pi)$ is an algebra.
\end{proof}

\begin{corollary}\label{superposition}\ \\
Suppose $f$ is complex analytic on a domain containing image of $u$. Then $u \to f(u) : J_{0, \gamma}^{m, \sigma}(\Pi) \to J_{0,\gamma}^{m,\sigma}(\Pi)$ is complex analytic for $m>1$, $\gamma > 1/2$.
\end{corollary}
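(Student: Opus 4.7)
My plan is to lift Theorem \ref{main superposition}, which establishes $C^1$-regularity of the superposition map on $J_{0,\gamma}^m(\Pi)$, to complex analyticity on the partially-analytic subspace $J_{0,\gamma}^{m,\sigma}(\Pi)$. The key observation is that when $f$ is complex analytic, the Fr\'{e}chet derivative of $u \to f(u)$ is multiplication by the complex-valued function $f'(u)$, which is automatically $\C$-linear; combined with the algebra property of $J_{0,\gamma}^{m,\sigma}(\Pi)$ established in the proposition just above, this upgrades real $C^1$ to complex $C^1$, equivalent to analyticity between complex Banach spaces.

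First, I would verify that $u \to f(u)$ maps $J_{0,\gamma}^{m,\sigma}(\Pi)$ into itself. Using the equivalent norm $\lVert a \rVert_{J_{0,\gamma}^{m,\sigma}}^2 = \lVert a(\cdot, \cdot + i\sigma) \rVert_{J_{0,\gamma}^m}^2 + \lVert a(\cdot, \cdot - i\sigma) \rVert_{J_{0,\gamma}^m}^2$ and Theorem \ref{main superposition} applied to the two traces $u(\cdot, \cdot \pm i\sigma) \in J_{0,\gamma}^m(\Pi)$, we obtain $f(u)(\cdot, \cdot \pm i\sigma) \in J_{0,\gamma}^m(\Pi)$ with the required norm bound. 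To confirm the analytic continuation of $f(u)$ across $\T_\sigma$, view $z \to u(\cdot, z)$ as holomorphic from $\T_\sigma$ into $J_{0,\gamma}^m[0,1]$; the composition with the fiberwise superposition $v \to f(v)$ preserves holomorphy, which I would establish directly by expanding $f$ in a Taylor series about the reference value $u_0(\psi, z_0)$, recognizing the partial sums as polynomials in the Banach algebra $J_{0,\gamma}^m[0,1]$, with uniform convergence on a small disk about $z_0 \in \T_\sigma$ following from Cauchy estimates $\lvert f^{(n)}(z)/n! \rvert \leq M r^{-n}$ combined with the algebra bound $\lVert h^n \rVert \leq C^{n-1} \lVert h \rVert^n$.

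Second, I would deduce analyticity. Theorem \ref{main superposition} provides the real Fr\'{e}chet derivative of $u \to f(u)$ at $u$ as multiplication by $f'(u)$. Since $f'$ is again complex analytic, $f'(u) \in J_{0,\gamma}^{m,\sigma}(\Pi)$ by step one, and by Corollary \ref{multiplication is analytic} this multiplication is a bounded $\C$-linear operator on $J_{0,\gamma}^{m,\sigma}(\Pi)$ depending continuously on $u$. Hence $u \to f(u)$ is complex Fr\'{e}chet $C^1$, which by the standard theory of holomorphic maps between complex Banach spaces implies complex analyticity.

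The main obstacle I anticipate is the rigorous verification of Banach-valued holomorphy in step one: pointwise holomorphy in $\psi$ of $f(u(\psi,z))$ is immediate, but must be upgraded to holomorphy of $z \to f(u(\cdot, z))$ as a $J_{0,\gamma}^m[0,1]$-valued map. The Taylor series argument sketched above handles this provided we have a uniform Cauchy estimate over the compact image of $u_0$, which is where continuity of $u_0$ on $\overline{\Pi}$, guaranteed by Proposition \ref{Morrey Kondratev} under $m>1$ and $\gamma > 1/2$, plays an essential role. An alternative approach, should the Taylor-series bookkeeping prove cumbersome, is to invoke Morera's theorem: it suffices to check that $\oint_\Gamma f(u(\cdot, z))\, dz = 0$ in $J_{0,\gamma}^m[0,1]$ for every triangle $\Gamma \subset \T_\sigma$, which reduces via duality to pointwise-in-$\psi$ scalar holomorphy combined with dominated convergence to interchange the integral with the Banach norm.
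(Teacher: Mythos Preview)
Your approach is correct and shares the paper's two-step skeleton: first show $f(u) \in J_{0,\gamma}^{m,\sigma}(\Pi)$, then deduce complex analyticity of $u \mapsto f(u)$ from the $\C$-linearity of its Fr\'{e}chet derivative (multiplication by $f'(u)$ in the Banach algebra).

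The substantive difference is in how Banach-valued holomorphy of $z \mapsto f(u(\cdot,z))$ is verified. The paper does not use Taylor series or Morera; instead it works with the explicit decomposition $u = \xi + v$, sets $g(z) = f\big(\xi(z)+v(\cdot,z)\big) - f\big(\xi(z)\big)$, and computes $g'(z)$ directly by the chain rule, rewriting it as a sum of three terms each of which is manifestly a product of a scalar and a $K_\gamma^m[0,1]$ element, or of two $K_\gamma^m[0,1]$ elements. The algebra property then places $g'(z)$ in $K_\gamma^m[0,1]$, and existence of the complex derivative follows from the $C^1$ statement of Theorem~\ref{main superposition} (applied on $J_{0,\gamma}^m[0,1]$) composed with the holomorphic map $z \mapsto u(\cdot,z)$. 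This route entirely avoids the Taylor-series convergence bookkeeping you correctly identify as the main obstacle; your Morera alternative via weak holomorphy and Dunford's theorem would work equally well and is comparably clean. Conversely, your second step---spelling out that complex $C^1$ between complex Banach spaces gives analyticity---is more explicit than the paper, which stops its written proof at $f(u) \in J_{0,\gamma}^{m,\sigma}(\Pi)$ and leaves that final inference to the reader.
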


\begin{proof}
Let $u\in J_{0,\gamma}^{m, \sigma}(\Pi)$. Write $u = \xi + v$, with $\xi \in X_\sigma^m(\T)$, $v \in K_\gamma^{m,\sigma}(\Pi)$. Now, as in the proof of the prior theorem, write $f(u) = f(\xi) + f(\xi+v) - f(\xi)$. By definition, $\xi(\cdot + it) \in H^m(\T)$ for all $\lvert t \rvert \leq \sigma$. By \ref{superposition sobolev}, $f(\xi(\cdot + it)) \in H^m(\T)$ for $\lvert t \rvert \leq \sigma$. Since $f(\xi)$ is the composition of analytic functions, it is itself analytic in $\T_\sigma$, and so $f(\xi) \in X_\sigma^m(\T)$. 

Next, by definition $u(\cdot, \cdot + it) \in J_{0, \gamma}^m(\Pi)$ for all $\lvert t \rvert \leq \sigma$. From the previous theorem, $g(u) = f(\xi+v) - f(\xi) \in K_{\gamma}^m(\Pi)$ for each fixed $\lvert t \rvert \leq \sigma$. Now fix $z \in \T_\sigma$,  then $u(\cdot, z) \in \C \times K_\gamma^m[0,1]$. All of the prior results in this chapter on $K_\gamma^m(\Pi)$ and $J_{0,\gamma}^m(\Pi)$ likewise apply to $K_\gamma^m[0,1]$ and $J_{0,\gamma}^m[0,1]$. Namely, these spaces are algebras and superposition maps are well defined them. Thus $g(z) = f\big(\xi(z) + v(\cdot, z)\big) - f\big(\xi(z)\big)$ is a  $K_\gamma^m[0,1]$ valued map of $z$. Differentiating gives
\begin{align*}
g'(z)  & = f'\big(\xi(z) + v(\cdot, z)\big)\big(\xi'(z) + \partial_z v(\cdot, z)\big) - f'\big(\xi(z)\big)\xi'(z) \\
& = \Big( f'\big(\xi(z) + v(\cdot,z)\big) -f'\big(\xi(z)\big) \Big) \xi'(z) +f'(\xi) \partial_z v(\cdot, z) \\
& + \Big(f'\big(\xi(z) + v(\cdot,z)\big) - f'(\xi(z) \Big) \partial_z v(\cdot, z).
\end{align*}
The first two terms are products of a $K_\gamma^m[0,1]$ function and scalar, the last term is the product of two $K_\gamma^m[0,1]$, which is itself in $K_\gamma^m[0,1]$, since this space is an algebra. Thus we have showed that $z \to f\big(\xi(z)+v(\cdot,z)\big) - f\big(\xi(z)\big)$ is well defined and complex differentiable, thus analytic. This means $f(\xi+v)-f(\xi) \in K_{\gamma}^{m, \sigma}(\Pi)$. We conclude that $f(u) = f(\xi)+ f(\xi+v)-f(\xi) \in J_{0,\gamma}^{m,\sigma}(\Pi)$.
\end{proof}

\begin{theorem}\ \\
For $m>3$, $\gamma > 1/2$, the map $a \to \Xi(a) : J_{1/2,\gamma}^{m,\sigma}(\Pi) \to \widetilde{J}_{0, \gamma}^{m-2, \sigma}(\Pi)$ is analytic in a neighbourhood of $a = \psi^{1/2}$.
\end{theorem}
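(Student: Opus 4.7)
The plan is to view $\Xi(a)$ as a composition of two analytic maps: first the linear map $a \mapsto (\,[\psi^{-1/2}a],\, [\psi^{1/2}a_\psi],\, [\psi^{-1/2}a_\theta],\, [\psi^{-1/2}a_{\theta\theta}],\, [\psi^{1/2}a_{\psi\theta}],\, [\psi^{3/2}a_{\psi\psi}]\,)$ taking $J_{1/2,\gamma}^{m,\sigma}(\Pi)$ into a product of copies of $J_{0,\gamma}^{m-2,\sigma}(\Pi)$, followed by the rational superposition map $R$ already exhibited in the preceding pages, which writes $\Xi(a) = R(\text{brackets})$ with no residual powers of $\psi$ outside the brackets. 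The first map is linear, so its analyticity reduces to a boundedness check in each coordinate; the second is analytic by repeated application of Corollary \ref{multiplication is analytic} and Corollary \ref{superposition}, provided the two denominators $[\psi^{-1/2}a]^2$ and $[\psi^{1/2}a_\psi]^3$ stay away from zero.

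For the boundedness of each bracket I would decompose $a = v(\theta)\psi^{1/2} + w(\psi,\theta)$ with $v \in X_\sigma^m(\T)$ and $w \in K_{1/2+\gamma}^{m,\sigma}(\Pi)$, and use the Kondratev rules collected in Proposition \ref{Kondratev properties}: multiplication by $\psi^\alpha$ shifts the weight, $\partial_\psi$ shifts the weight by one and drops one derivative, and $\partial_\theta$ drops one derivative without moving the weight. Applied to the six brackets this gives, for example, $[\psi^{3/2}a_{\psi\psi}] = -v/4 + \psi^{3/2}w_{\psi\psi}$ with $-v/4 \in X_\sigma^{m-2}(\T)$ and $\psi^{3/2}w_{\psi\psi} \in K_\gamma^{m-2,\sigma}(\Pi)$, and analogous expressions for the other five brackets, each landing in $J_{0,\gamma}^{m-2,\sigma}(\Pi)$. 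At $a = \psi^{1/2}$ the denominator brackets evaluate to the nonzero constants $1$ and $1/2$, and since $m-2 > 1$ and $\gamma > 1/2$ guarantee continuity, they remain pointwise nonzero on a small $J_{1/2,\gamma}^{m,\sigma}$-neighbourhood of the reference, so $R$ applied to the bracket tuple is analytic there with values in $J_{0,\gamma}^{m-2,\sigma}(\Pi)$.

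The main subtlety — and the step I would spend most of the write-up on — is that the codomain is not $J_{0,\gamma}^{m-2,\sigma}(\Pi)$ but the tilde subspace $\widetilde{J}_{0,\gamma}^{m-2,\sigma}(\Pi)$; one must verify that the leading-order trace of $\Xi(a)$ at $\psi = 0$ has vanishing Fourier modes $k = \pm 2$. Since the brackets are continuous up to $\psi = 0$ where they equal $v, v/2, v', v'', v'/2, -v/4$, substitution into the explicit formula for $R$ collapses, after a short calculation, to
\begin{equation*}
\Xi(a)\bigl|_{\psi=0} \;=\; \frac{4}{v^2} - \frac{2\,v''}{v^3} + \frac{6(v')^2}{v^4} \;=\; \bigl(\partial_\theta^2 + 4\bigr)\!\left(\frac{1}{v^2}\right).
\end{equation*}
The operator $\partial_\theta^2 + 4$ annihilates $e^{\pm 2i\theta}$, so its range avoids the $k = \pm 2$ modes in $\T$, which is precisely the tilde constraint. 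Combining this structural identity with the analyticity of the composition gives the theorem; the $1/v^2$ superposition is well-defined on the analytic side because $v$ is close to $1$, matching the nondegeneracy of the denominator brackets used above.
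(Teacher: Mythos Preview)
Your proposal is correct and follows the same strategy as the paper: decompose $\Xi$ as the linear bracket map into $\bigl(J_{0,\gamma}^{m-2,\sigma}(\Pi)\bigr)^6$ followed by the rational superposition, invoke the algebra and superposition results for $m-2>1$, $\gamma>1/2$, check the denominators stay away from zero near the reference, and then verify the tilde constraint on the leading term. The one place you differ is in the last step: the paper computes $\Xi(\psi^{1/2}\xi)=\tfrac{4}{\xi^2}+\tfrac{6(D\xi)^2}{\xi^4}-\tfrac{2D^2\xi}{\xi^3}$ and shows the $\pm 2$ Fourier coefficients vanish via two integrations by parts, whereas your recognition of this expression as $(\partial_\theta^2+4)(1/v^2)$ makes the vanishing immediate on the Fourier side, since the symbol $4-k^2$ kills the $k=\pm2$ modes. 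This is a slicker packaging of the same computation, not a different argument.
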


\begin{proof}
Earlier in this chapter, we showed we can write
\begin{align*}
\Xi(a) & =  -\frac{1}{[ \psi^{1/2} a_\psi ]^3} \Big( 1 + \frac{[ \psi^{-1/2}a_\theta ]^2}{[ \psi^{-1/2}a ]^2} \Big) [\psi^{3/2} a_{\psi \psi}] + 2\frac{[ \psi^{-1/2}a_\theta ] [\psi^{1/2} a_{\psi \theta}]}{[ \psi^{-1/2}a ]^2  [ \psi^{1/2} a_\psi ]^2} \\
& - \frac{ [ \psi^{-1/2}a_{\theta \theta} ]}{[ \psi^{-1/2}a ]^2 [ \psi^{1/2} a_\psi ]} + \frac{1}{[ \psi^{-1/2}a ]  [ \psi^{1/2} a_\psi ]}.
\end{align*}
Thus $\Xi$ is a composition of maps $a  \to [ \cdot \cdot \cdot ]: J_{1/2,\gamma}^{m,\sigma}(\Pi) \to J_{0,\gamma}^{m-2,\sigma}(\Pi)$, which are linear and thus analytic, and a rational function of the square brackets. Given that the square brackets are valued in  $J_{0,\gamma}^{m-2}(\Pi)$, then by \ref{multiplication is analytic} and \ref{superposition}, this rational function is an analytic map $J_{0,\gamma}^{m-2}(\Pi) \times \cdot \cdot \cdot \times J_{0,\gamma}^{m-2}(\Pi) \to J_{0,\gamma}^{m-2}(\Pi)$ when $m-2 > 1$, so long as the denominator does not vanish. To see it does not, suppose $\lVert a - \psi^{1/2} \rVert_{J_{1/2, \gamma}^{m , \sigma}(\Pi)} < \varepsilon$. By boundedness of multiplication by $\psi^{-1/2}$, we get
\begin{equation*}
\lVert \psi^{-1/2}a -1 \rVert_{J_{0, \gamma}^{m, \sigma}(\Pi)} \leq C \lVert a - \psi^{1/2} \rVert_{J_{1/2, \gamma}^{m , \sigma}(\Pi)} \leq C\varepsilon.
\end{equation*}
Then
\begin{equation*}
\lvert \psi^{-1/2}a - 1 \rvert \leq D \lVert \psi^{-1/2}a-1 \rVert_{J_{0, \gamma}^{m,\sigma}(\Pi)} \leq CD\varepsilon.
\end{equation*}
Taking $\varepsilon$ small enough, we can ensure $\psi^{-1/2}a$ is close enough to $1$ in $\C$ that it is never zero. 
Similarly, by boundedness of $\partial_\psi$,
\begin{equation*}
\lVert a_\psi - 1/2 \psi^{-1/2} \rVert_{J_{-1/2, \gamma}^{m-1, \sigma }(\Pi)} \leq C \lVert a - \psi^{1/2} \rVert_{J_{1/2, \gamma}^{m , \sigma}(\Pi)} \leq C\varepsilon.
\end{equation*}
By boundedness of multiplication by $\psi^{1/2}$,
\begin{equation*}
\lVert \psi^{1/2}a_\psi - 1/2 \rVert_{J_{0, \gamma}^{m-1, \sigma }(\Pi)} \leq D \lVert a_\psi - 1/2 \psi^{-1/2} \rVert_{J_{-1/2, \gamma}^{m-1, \sigma }(\Pi)}.
\end{equation*}
Then
\begin{equation*}
\lvert \psi^{1/2}a_\psi - 1/2 \rvert \leq E \lVert \psi^{1/2}a_\psi - 1/2 \rVert_{J_{0, \gamma}^{m-1, \sigma }(\Pi)} \leq CDE\varepsilon.
\end{equation*}
Again, taking $\varepsilon$ small enough, we can ensure $\psi^{1/2}a_\psi$ is close enough to $1/2$ in $\C$ it is never zero. Thus we have shown $\Xi: U \to J_{0, \gamma}^{m-2,\sigma}(\Pi)$ is an analytic map on a neighbourhood $U \subset J_{1/2, \gamma}^{m,\sigma}(\Pi)$ of $\psi^{1/2}$.

It remains to show that $\Xi$ is in fact $\widetilde{J}_{0, \gamma}^{m-2, \sigma}(\Pi)$ valued. This means the leading $H^m(\T)$ term of $\Xi(a)$ has zero second-order Fourier coefficients. We have seen that for $\gamma > 1/2$, the leading term of $\Xi(a)$ depends only on the leading term of the square brackets, which in turn depend only on the leading term of $a(\psi,\theta)$, that is, depend only on $\psi^{1/2}\xi(\theta)$. Thus we must show $\int_\T \Xi \big( \psi^{1/2}\xi(\theta) \big) e^{\pm 2i\theta} \dd{\theta} =0$. We find
\begin{equation*}
\Xi ( \psi^{1/2} \xi) = \frac{4}{\xi^2} + \frac{6(D\xi)^2}{\xi^4}- \frac{2D^2\xi}{\xi^3}.
\end{equation*}\
We find
\begin{align*}
\int_\T \Xi ( \psi^{1/2} \xi) e^{\pm 2 i \theta} \dd{\theta} & = 2\int_\T \Big( \frac{2}{\xi^2} + \frac{3(D\xi)^2}{\xi^4}- \frac{D^2\xi}{\xi^3}  \Big) e^{\pm 2 i \theta} \dd{\theta} \\
& = 2\int_\T \Big( \frac{2}{\xi^2} \pm 2i \frac{D\xi}{\xi^3}  \Big) e^{\pm 2 i \theta} \dd{\theta} \\
& = 2\int_\T \Big( \frac{2}{\xi^2} \mp iD \big(\frac{1}{\xi^2}\big)   \Big) e^{\pm 2 i \theta} \dd{\theta} \\
& = 0
\end{align*}
where we have integrated by parts the last term on the first line, and again the last term on the third line. We conclude that $\Xi(a) \in \widetilde{J}_{0,\gamma}^{m-2,\sigma}(\Pi)$, thus the statement of the theorem is proved.
\end{proof}


\subsubsection{Boundary Operator}\ \\

We now turn our attention to the nonlinear boundary map
\begin{multline*}
B(b,R,p,a) = -b^2\Big( \arctan \big(p_y + Ra(1,\theta) \sin\theta, p_x + Ra(1,\theta)\cos\theta \big) \Big)\\
 + R^2a^2(1,\theta) + 2Ra(1,\theta) (p_x \cos\theta + p_y\sin\theta) + p_x^2 + p_y^2.
\end{multline*}

\begin{proposition} \label{boundary analytic}\ \\
Suppose $f$ is complex analytic on a domain containing image of $u$. Then $u \to f(u) : X_\sigma^{m-1/2}(\T) \to X_\sigma^{m-1/2}(\T)$ is complex analytic for $m>1$.
\end{proposition}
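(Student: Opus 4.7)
The plan is to mirror the argument used for $J_{0,\gamma}^{m,\sigma}(\Pi)$, but working with the simpler space $X_\sigma^{m-1/2}(\T)$. The core observation is that, since $m-1/2 > 1/2$, the Sobolev space $H^{m-1/2}(\T)$ is an algebra, which together with the Paley--Wiener style characterization of $X_\sigma^{m-1/2}(\T)$ will give us everything we need.

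First I would establish that $X_\sigma^{m-1/2}(\T)$ is a Banach algebra. Given $u,v \in X_\sigma^{m-1/2}(\T)$, regard them as holomorphic functions on $\T_\sigma$; then $uv$ is holomorphic there as well, and its boundary values $u(\cdot\pm i\sigma)\, v(\cdot\pm i\sigma)$ lie in $H^{m-1/2}(\T)$ with
\begin{equation*}
\lVert u(\cdot\pm i\sigma)\, v(\cdot\pm i\sigma)\rVert_{H^{m-1/2}} \leq C\lVert u(\cdot\pm i\sigma)\rVert_{H^{m-1/2}}\lVert v(\cdot\pm i\sigma)\rVert_{H^{m-1/2}},
\end{equation*}
by the algebra property of $H^{m-1/2}(\T)$ (valid since $m-1/2>1/2$). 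Summing the two boundary contributions gives $\lVert uv\rVert_{X_\sigma^{m-1/2}}\leq C\lVert u\rVert_{X_\sigma^{m-1/2}}\lVert v\rVert_{X_\sigma^{m-1/2}}$.

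Next I would show that the superposition map $u\mapsto f(u)$ sends $X_\sigma^{m-1/2}(\T)$ into itself. Holomorphy of $z\mapsto f(u(z))$ on $\T_\sigma$ is automatic, as it is a composition of complex-analytic maps (and $u$ has image lying in the domain of $f$ provided we work in a neighbourhood of a fixed reference). For the boundary regularity, I would apply Proposition \ref{superposition sobolev} to each of the boundary traces $u(\cdot\pm i\sigma)\in H^{m-1/2}(\T)$, yielding $f(u(\cdot\pm i\sigma))\in H^{m-1/2}(\T)$ with a quantitative bound polynomial in $\lVert u\rVert_{X_\sigma^{m-1/2}}$. This places $f(u)$ in $X_\sigma^{m-1/2}(\T)$ via the holomorphic-trace characterization of the space, with a local bound.

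Finally, to upgrade well-definedness to complex analyticity, I would use the standard power series criterion. In a small ball around $u_0\in X_\sigma^{m-1/2}(\T)$, write
\begin{equation*}
f(u_0+h) = \sum_{k=0}^\infty \frac{f^{(k)}(u_0)}{k!}\, h^k,
\end{equation*}
where $f^{(k)}(u_0)\in X_\sigma^{m-1/2}(\T)$ (by the well-definedness just proved) and each term $\frac{1}{k!}f^{(k)}(u_0)h^k$ is a bounded $k$-linear form in $h$ by the Banach algebra property. Convergence in operator norm on a sufficiently small ball follows from the Cauchy estimates applied to $f$ on a compact neighbourhood of the image of $u_0$, giving a geometric series bound $\sum_k C_k \lVert h\rVert^k$ with radius determined by the distance of the image of $u_0$ to the boundary of the domain of $f$. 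I do not anticipate a main obstacle here: the entire argument is a direct transcription of the $J_{0,\gamma}^{m,\sigma}(\Pi)$ analysis, but without any Kondratev weight or Hardy-inequality gymnastics, since the underlying space $H^{m-1/2}(\T)$ is already an algebra by classical Sobolev theory.
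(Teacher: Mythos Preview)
Your proposal is correct and follows essentially the same approach as the paper: both establish that $f(u)$ is holomorphic on $\T_\sigma$ as a composition of holomorphic maps, then use Proposition~\ref{superposition sobolev} on the boundary traces $u(\cdot\pm i\sigma)\in H^{m-1/2}(\T)$ to place $f(u)$ in $X_\sigma^{m-1/2}(\T)$. The only minor difference is in the final step: the paper simply asserts complex Fr\'echet differentiability (the derivative being multiplication by $f'(u)$, bounded since the space is an algebra) and invokes the fact that complex differentiability between complex Banach spaces implies analyticity, whereas you spell out the convergent power series directly via Cauchy estimates; both routes are valid and standard.
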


\begin{proof}
If $u \in X_\sigma^{m-1/2}(\T)$ and $f$ complex analytic on a domain containing image of $u$, then $f(u)$ is a composition of holomorphic functions and thus holomorphic in $\T_\sigma$. Furthermore, $u(\cdot + it) \in H^{m-1/2}(\T)$ for every $\lvert t \rvert \leq \sigma$ and thus by the previous result,
$f \big( u(\cdot + it) \big) \in H^{m-1/2}(\T)$ for every $\lvert t \rvert \leq \sigma$. Thus $f(u) \in X_\sigma^{m-1/2}(\T)$. Additionally, this map is complex differentiable and thus analytic.
\end{proof}

To apply the above result to our boundary map, we define the superposition operators:
\begin{gather}\label{boundary maps}
(R,p,a) \to X(\theta) = p_x + Ra(1,\theta)\cos\theta, \\
(R,p,a) \to Y(\theta) = p_y + Ra(1,\theta)\sin\theta, \\
(R, p,a) \to f(\theta) = R^2a^2(1,\theta) + 2Ra(1,\theta) (p_x \cos\theta + p_y\sin\theta) + p_x^2 + p_y^2, \\
(R,p,a) \to \varphi(\theta) = \arctan \big(p_y + Ra(1,\theta) \sin\theta, p_x + Ra(1,\theta)\cos\theta \big).
\end{gather}
Then the boundary map can be written $(R,p,a) \to B(\theta) = -b^2(\varphi(\theta)) + f(\theta)$. First we establish analyticity of superposition maps $X$, $Y$ and $f$. Second we will address the map $\varphi$ and the composition $b^2(\varphi)$.

\begin{corollary}\ \\
The maps
\begin{equation*}
(R,p,a) \to X,Y,f : \C^3 \times J_{1/2,\gamma}^{m,\sigma}(\Pi) \to X_\sigma^{m-1/2}(\T)
\end{equation*}
defined in \ref{boundary maps} are complex analytic for $m \geq 2$.
\end{corollary}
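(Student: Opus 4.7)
The plan is to observe that each of $X$, $Y$, $f$ is a polynomial of degree at most two in the inputs $R, p_x, p_y$ and the boundary trace $a(1,\cdot)$, with fixed coefficients drawn from $\{1, \cos\theta, \sin\theta\} \subset X_\sigma^{m-1/2}(\T)$, and then to invoke closure of analytic maps under finite sums, products, and composition with bounded linear maps into a Banach algebra.

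First I would identify the basic analytic building blocks. The inclusions $\C \hookrightarrow X_\sigma^{m-1/2}(\T)$ sending a scalar to a constant function are bounded linear, hence analytic, so $R, p_x, p_y$ enter analytically. The functions $\cos\theta$ and $\sin\theta$ are entire and lie in $X_\sigma^{m-1/2}(\T)$ for every $\sigma > 0$, contributing as fixed (constant) elements. The trace map $a(\psi,\theta) \mapsto a(1,\theta) : J_{1/2,\gamma}^{m,\sigma}(\Pi) \to X_\sigma^{m-1/2}(\T)$ is bounded linear by the restriction statement already recorded after the definition of $J_{\lambda,\gamma}^{m,\sigma}(\Pi)$, and thus analytic.

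Next I would show that $X_\sigma^{m-1/2}(\T)$ is a Banach algebra under pointwise multiplication. This follows from
\begin{equation*}
\lVert uv \rVert_{X_\sigma^{m-1/2}(\T)}^2 = \lVert u(\cdot + i\sigma) v(\cdot + i\sigma) \rVert_{H^{m-1/2}(\T)}^2 + \lVert u(\cdot - i\sigma) v(\cdot - i\sigma) \rVert_{H^{m-1/2}(\T)}^2
\end{equation*}
together with the standard fact that $H^{n}(\T)$ is an algebra for $n > 1/2$. The hypothesis $m \geq 2$ yields $m - 1/2 \geq 3/2 > 1/2$, so this applies. This is precisely the argument already used inside the preceding proposition to handle the leading-term component of the $J_{0,\gamma}^{m,\sigma}(\Pi)$ algebra property.

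Finally, with these pieces in place, each of $X$, $Y$, $f$ is expressed as a finite sum of products of at most two analytic maps into the Banach algebra $X_\sigma^{m-1/2}(\T)$; since such polynomial maps between complex Banach spaces are entire, the three superpositions are complex analytic on all of $\C^3 \times J_{1/2,\gamma}^{m,\sigma}(\Pi)$. I do not expect any genuine obstacle here: the corollary is essentially a bookkeeping consequence of the trace estimate and the algebra property of $X_\sigma^{m-1/2}(\T)$. The real difficulty of the boundary operator is deferred to the angle function $\varphi$ and the composition $b^2 \circ \varphi$, where one must handle a non-polynomial nonlinearity together with the mismatch $\tau > \sigma$ needed to absorb the coordinate change.
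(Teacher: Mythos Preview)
Your proof is correct and follows essentially the same route as the paper: both factor through the bounded linear trace map $a\mapsto a(1,\cdot)$ and multiplication by the entire functions $\cos\theta,\sin\theta$, then observe that $X,Y,f$ are polynomials in these inputs with values in the Banach algebra $X_\sigma^{m-1/2}(\T)$ (the paper phrases this last step as an appeal to Proposition~\ref{boundary analytic}, while you invoke the algebra property directly). One small slip: $f$ has degree four, not two, since it contains the term $R^2a^2(1,\theta)$; this does not affect the argument, as polynomial maps of any finite degree into a Banach algebra are entire.
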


\begin{proof}
First, the restriction map $a(\psi,\theta) \to a(1,\theta) : J_{1/2,\gamma}^{m, \sigma}(\Pi) \to X_\sigma^{m-1/2}(\T)$ is linear and thus analytic. Multiplication of functions by $\cos\theta$ and $\sin\theta$ is a linear map, well defined into $X_\sigma^{m-1/2}(\T)$ and thus also analytic. In particular, the maps $(R,p,a) \to X,Y, f$ can be viewed as compositions of the linear map
\begin{align*}
(R, p_x,p_y, a(\psi,\theta)) & \to (R, p_x, p_y, p_x \cos\theta, p_y \sin\theta, a(1,\theta)) \\
\C^3 \times J_{1/2,\gamma}^{m, \sigma}(\Pi) & \to \underbrace{X_{\sigma}^{m-1/2}(\T) \times \cdot \cdot \cdot \times X_{\sigma}^{m-1/2}(\T)}_\text{$6$ times},
\end{align*}
and a polynomial on $\C^6$. By \ref{boundary analytic}, these maps are thus analytic  $\C^3 \times J_{1/2,\gamma}^{m, \sigma}(\Pi) \to X_\sigma^{m-1/2}(\T)$, so long as $m \geq 2$.
\end{proof}

Now we must consider the map $(R,p,a) \to \varphi(\theta)$. This maps takes the graph of polar function $r=Ra(1,\theta)$ centered at $p$ and returns the corresponding angle coordinate of this graph in $(\rho,\varphi)$ coordinates centered at the origin. So long as $p$ is close to $0$ and the graph $a(1,\theta)$ is close to a circle so that it corresponds to the graph of a polar function in both coordinates, then this nonlinear coordinate change will be well defined. In the real case, it will be some diffeomorphism of $\T$. In the complex case, we expect a biholomorphism from $\T_\sigma$ to a slightly deformed complex periodic strip $\varphi \{ T_\sigma \}$. Since this deformation should be continuous with respect to $(R,p,a)$, then for any $\tau > \sigma > 0$, we can take $(R,p,a)$ close enough to $(1,0, \psi^{1/2})$ that we get $\varphi \{ \T_\sigma \} \subset \T_\tau$.
 
\begin{proposition}\ \\
Let $m \geq 2$. For any $\tau > \sigma > 0$, there exists $\varepsilon > 0$ small enough such that if $\lvert R - 1 \rvert < \varepsilon$, $\lvert p \rvert < \varepsilon$ and  $\lVert a - \psi^{1/2} \rVert_{J_{1/2,\gamma}^{m,\sigma}(\Pi)} < \varepsilon$, then the map
\begin{equation*}
(R,p,a) \to \varphi(\theta) : \C^3 \times J_{1/2,\gamma}^{m,\sigma}(\Pi) \to X_\sigma^{m-1/2}(\T)
\end{equation*}
is analytic and the image $\varphi \{ \T_\sigma \}$ is contained in $\T_\tau$.
\end{proposition}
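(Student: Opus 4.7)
The plan is to exploit a change of variables that isolates the identity-shift in $\varphi$ and reduces the problem to a superposition on $X_\sigma^{m-1/2}(\T)$, to which Proposition \ref{boundary analytic} applies. The key algebraic observation is the factorisation
\begin{equation*}
X(\theta)+iY(\theta) = p_x+ip_y+Ra(1,\theta)e^{i\theta} = e^{i\theta}\bigl(Ra(1,\theta) + (p_x+ip_y)e^{-i\theta}\bigr) = e^{i\theta} W(\theta),
\end{equation*}
where $W(\theta) := Ra(1,\theta)+(p_x+ip_y)e^{-i\theta}$. Taking arguments on the real locus and extending holomorphically yields
\begin{equation*}
\varphi(\theta) = \theta - i\log W(\theta),
\end{equation*}
where $\log$ is the principal branch around $1$. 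Under this identification, the statement $\varphi(\theta)\in X_\sigma^{m-1/2}(\T)$ means precisely that the perturbation $h := \varphi - \theta = -i\log W$ lies in $X_\sigma^{m-1/2}(\T)$.

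First I would prove that the map $(R,p,a)\to W$ is analytic $\C^3\times J_{1/2,\gamma}^{m,\sigma}(\Pi)\to X_\sigma^{m-1/2}(\T)$. Indeed, it is a polynomial in $(R,p_x,p_y)$ whose coefficients involve the trace map $a\to a(1,\cdot)$ (linear and bounded by Proposition \ref{Kondratev properties}, hence analytic) and multiplication by the fixed function $e^{-i\theta}\in X_\sigma^{m-1/2}(\T)$ (also linear). At the reference $(1,0,\psi^{1/2})$ one has $W\equiv 1$; since $m-1/2>1/2$, the space $X_\sigma^{m-1/2}(\T)$ embeds continuously in $C(\overline{\T_\sigma})$, so by taking $\varepsilon$ small enough we can ensure $|W(z)-1|<1/2$ uniformly on $\T_\sigma$. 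In particular $W$ avoids the branch cut, and the principal branch $f(w)=-i\log w$ is holomorphic on a $\C$-neighbourhood of the image.

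Second, I would apply Proposition \ref{boundary analytic} to $f(w)=-i\log w$: the superposition operator $W\to -i\log W$ is complex analytic from a neighbourhood of $W\equiv 1$ in $X_\sigma^{m-1/2}(\T)$ into $X_\sigma^{m-1/2}(\T)$. Composing with the analytic map of the first step gives the required analyticity of $(R,p,a)\to h = -i\log W$, that is, of $(R,p,a)\to \varphi$ in the stated sense.

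For the geometric inclusion $\varphi\{\T_\sigma\}\subset\T_\tau$, I would use the continuous embedding $X_\sigma^{m-1/2}(\T)\hookrightarrow C(\overline{\T_\sigma})$ one more time: after possibly shrinking $\varepsilon$, continuity at the reference gives $\|h\|_{L^\infty(\T_\sigma)}\le C\|h\|_{X_\sigma^{m-1/2}}<\tau-\sigma$. Then for any $z\in\T_\sigma$,
\begin{equation*}
|\Im\varphi(z)| \le |\Im z| + |h(z)| < \sigma + (\tau-\sigma) = \tau,
\end{equation*}
so $\varphi(z)\in\T_\tau$. The main obstacle is really just bookkeeping: verifying that the single smallness parameter $\varepsilon$ can be chosen to control simultaneously (i) the distance of $W$ to $1$ on all of $\T_\sigma$, so that $\log W$ is well-defined and Proposition \ref{boundary analytic} applies, and (ii) the $L^\infty(\T_\sigma)$-norm of $h=-i\log W$, so that the deformed strip $\varphi\{\T_\sigma\}$ still fits inside the wider strip $\T_\tau$. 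Both requirements follow from the same continuity argument at the reference point, so a single choice of $\varepsilon$ suffices.
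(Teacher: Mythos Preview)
Your approach is genuinely different from the paper's and, once fixed, more elegant: the paper works directly with the two-argument $\arctan$ via four real charts, complexifies $X$ and $Y$, and carries out a somewhat laborious case analysis to show the one-argument ratio $Y/X$ (or $X/Y$) avoids the branch cut of $\arctan$ on $\T_\sigma$. Your idea of factoring out $e^{i\theta}$ to isolate the identity shift and reduce everything to a logarithm near $1$ bypasses all of that.

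However, the formula $\varphi(\theta)=\theta - i\log W(\theta)$ is wrong. Take for instance $R=2$, $p=0$, $a(1,\cdot)\equiv 1$: then $W\equiv 2$ and your formula gives $\varphi(\theta)=\theta - i\log 2$, while clearly $\varphi(\theta)=\theta$. The error is that for real parameters $\varphi=\theta+\arg W$, and $\arg W$ is not the holomorphic extension $-i\log W$; the latter has imaginary part $-\log|W|\neq 0$. The correct identity comes from $\arctan(Y/X)=\tfrac{1}{2i}\log\tfrac{X+iY}{X-iY}$: with the companion factorisation
\[
X(\theta)-iY(\theta)=e^{-i\theta}\tilde W(\theta),\qquad \tilde W(\theta):=Ra(1,\theta)+(p_x-ip_y)e^{i\theta},
\]
one gets
\[
\varphi(\theta)=\theta-\tfrac{i}{2}\log\frac{W(\theta)}{\tilde W(\theta)}.
\]
On the real locus $\tilde W=\overline W$, so $W/\tilde W=e^{2i\arg W}$ and $\varphi=\theta+\arg W$ as it should. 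Both $W$ and $\tilde W$ are affine in $(R,p,a(1,\cdot))$, equal to $1$ at the reference, and lie in $X_\sigma^{m-1/2}(\T)$; hence so does $W/\tilde W$, and your appeal to Proposition~\ref{boundary analytic} and the $C(\overline{\T_\sigma})$ embedding then goes through verbatim with $h=-\tfrac{i}{2}\log(W/\tilde W)$.
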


\begin{proof}\

In the real case, $(x,y) \in \R^2 \setminus \{ 0 \} \to \varphi = \atan(y,x)$ is a $\T$-valued function giving the angle between the plane vector $(x,y)$ and the $x$-axis. Equivalently, one can think of $\atan(y,x)$ as a (helicoidal) multivalued function with the property that if $\atan(y,x) = \varphi$, then also $\atan(y,x) = \varphi + 2\pi k$ for any $k \in \Z$.

Let us define the usual one-argument arctangent by $\atan(y/x) \in (-\pi/2, \pi/2)$ for $x>0$. Using the one-argument arctangent, we can for example define the following four charts of the multivalued arctangent:
\begin{equation*}
\atan(y,x) = 
	\begin{cases}
	\atan(y/x)  & \text{if } x > 0 \\
	\pi/2-\atan(x/y) & \text{if } y > 0\\
	\pi - \atan(y/x) & \text{if } x < 0\\
	3\pi/2-\atan(x/y) & \text{if } y < 0.
	\end{cases}
\end{equation*}
These charts correspond to values of $\varphi$ in $(-\pi/2, \pi/2)$, $(0, \pi)$, $ (\pi/2, 3\pi/2)$ and $(\pi, 2\pi)$ respectively. Adding $2\pi k$ with $k \in \Z$ defines the remaining charts of the full helicoid.

Next, consider the complexifications $x \to X = x + i \xi$ and $y \to Y = y + i\eta$. Analogous charts of $\atan(Y,X)$ for $x >0$, $y > 0$, $x < 0$ and $y < 0$ are defined by use of the complex one-argument function $\atan(z)$, with $z = X/Y$ or $z = Y/X$. The function $\atan(z)$, which is the complex extension of the real one-argument arctangent with values in $( -\pi/2, \pi/2)$, is analytic except at $\{ z : \Re{z} = 0, \lvert \Im{z} \rvert \geq 1 \}$.

Now, treating $R$, $p$, $a$, $\theta$ as real, define the complex extensions: $p_x \to p_x + i \pi_x$, $p_y \to p_y + i \pi_y $, $Ra(1,\theta) \to \alpha + i\beta $ and $\theta \to \theta + it$. The last extension gives identities 
\begin{equation*}
\cos(\theta + it) = \cos\theta \cosh t - i\sin\theta\sinh t \text{ ,} \quad \sin(\theta + it) = \sin\theta \cosh t + i \cos\theta \sinh t. 
\end{equation*}
These induce complexifications of $p_x + Ra(1,\theta)\cos\theta$ and $p_y + Ra(1,\theta)\sin\theta$, given by
\begin{align*}
X & = (p_x + \alpha \cos\theta \cosh t + \beta \sin\theta \sinh t) + i ( \pi_x - \alpha \sin\theta \sinh t + \beta \cos\theta \cosh t) \\ & = x + i \xi,
\end{align*}
\begin{align*}
Y & = (p_y + \alpha \sin\theta \cosh t - \beta \cos\theta \sinh t) + i ( \pi_y + \alpha \cos\theta \sinh t + \beta \sin\theta \cosh t) \\ & = y + i \eta.
\end{align*}
First suppose $x > 0$. We have
\begin{equation*}
z = \frac{Y}{X} = \frac{(xy + \xi \eta)+ i(x\eta - y\xi)}{x^2+\xi^2}.
\end{equation*}
If $\Re{z} \neq 0$, then $\atan(z)$ is analytic. If on the other hand $\Re{z} = 0$, then $\atan(z)$ is analytic when $\lvert \Im{z} \rvert < 1$. So suppose $\Re{z} = 0$. Then $xy + \xi \eta = 0$. Since $x > 0$, we have $y = -{\xi \eta}/{x}$. From this we find that $\Im{z} = \eta/x$. Thus for $\atan(z)$ to be analytic, we require that $\lvert \eta /x \rvert < 1$, or equivalently, that $ \lvert \eta \rvert < \lvert x \rvert$. 

Substituting expressions for $x$, $y$, $\xi$, $\eta$ into condition $xy + \xi \eta = 0$ and using identity $\cosh^2 t - \sinh^2 t = 1$, we get
\begin{multline*}
p_x p_y + \pi_x \pi_y + (\alpha p_x + \beta \pi_x)\sin\theta \cosh t + (\alpha \pi_x - \beta p_x) \cos \theta \sinh t \\ + (\alpha p_y + \beta \pi_y) \cos \theta \cosh t - (\alpha \pi_y - \beta p_y) \sin \theta \sinh t + (\alpha^2 + \beta^2) \sin\theta \cos\theta = 0.
\end{multline*}
From the statement of the theorem, we have $p_x, p_y, \pi_x, \pi_y, \beta \sim \varepsilon$ and $\alpha \sim 1$. The above equality implies that the last term of the left hand side is of the same order as the other terms, thus we deduce $\sin\theta \cos\theta \sim \varepsilon (\sin\theta + \cos\theta)(\cosh t + \sinh t) \sim \varepsilon$,
since $ \lvert t \rvert < \sigma$ and $\sigma$ is fixed. If $\varepsilon$ is small enough, then $\sin\theta \cos\theta \sim \varepsilon$ implies either $\sin\theta \sim \varepsilon$ or $\cos\theta \sim \varepsilon$. Since we work on the chart $x>0$, we can assume without loss of generality that $\sin\theta \sim \theta \sim \varepsilon$ and thus $\cos \theta \sim 1$. The other case can be handled by charts $y>0$ and $y < 0$.

Returning to the desired estimate $ \lvert \eta \rvert < \lvert x \rvert$, observe $ \eta = \pi_y + \alpha \cos\theta \sinh t + \beta \sin\theta \cosh t$ and $ x = p_x + \alpha \cos \theta \cosh t + \beta \sin \theta \sinh t$. We have
\begin{align*}
\lvert \eta \rvert & = \lvert \pi_y + \alpha \cos\theta \sinh t + \beta \sin\theta \cosh t \rvert\\
& \leq \lvert \pi_y \rvert + \alpha \cos\theta \lvert \sinh t \rvert + \lvert \beta \sin\theta \rvert \cosh t \\
& = \lvert \pi_y \rvert + \lvert p_x \rvert - \lvert p_x \rvert + (\alpha \cos\theta - \lvert \beta \sin\theta \rvert)(\lvert \sinh t \rvert - \cosh t)- \lvert \beta \sin\theta \sinh t \rvert \\
& \phantom{{}={}} + 2\lvert \beta \sin\theta \sinh t \rvert + \alpha \cos\theta \cosh t \\
& < \lvert \pi_y \rvert + \lvert p_x \rvert + 2 \lvert \beta \sin\theta \sinh \sigma \rvert + ( \alpha \cos\theta - \lvert \beta \sin\theta \rvert)( \sinh\sigma - \cosh \sigma) \\
& \phantom{{}={}} + \lvert p_x + \alpha \cos\theta \cosh t + \beta \sin\theta \sinh t \rvert.
\end{align*}
Here we have used the fact that $\lvert t \rvert < \sigma$. Finally, since $p_x$, $\pi_y$, $\beta$, $\sin\theta \sim \varepsilon$ and $\alpha \cos\theta - \lvert \beta \sin\theta \rvert \sim 1$, for any $ \sigma$, we can take $\varepsilon$ small enough such that $\lvert \pi_y \rvert + \lvert p_x \rvert + 2 \lvert \beta \sin\theta \sinh \sigma \rvert + ( \alpha \cos\theta - \lvert \beta \sin\theta \rvert)( \sinh\sigma - \cosh \sigma) < 0$. We thus get $\lvert \eta \rvert < \lvert x \rvert$. 

Analogous arguments hold for the other charts (with $z = X/Y$ for $y > 0$ and $y < 0$). Returning to our standard notation where $R$, $p$ and $a(\psi,\theta)$ are $\C$-valued, we conclude that for any $\sigma > 0$, there exists $\varepsilon > 0$ small enough such that for $\lvert R - 1 \rvert < \varepsilon$, $\lvert p \rvert < \varepsilon$ and  $\lVert a - \psi^{1/2} \rVert_{J_{1/2,\gamma}^{m,\sigma}(\Pi)} < \varepsilon$, the multivalued function $\varphi = \atan(Y,X)$ is analytic on the image of $X(\theta) = p_x + Ra(1,\theta)\cos\theta$, $Y(\theta) = p_y + Ra(1,\theta) \sin\theta$. By \ref{boundary analytic}, for $m\geq2$,
\begin{equation*}
(R,p,a) \to (X, Y) \to \atan(X,Y)
\end{equation*}
\begin{equation*}
 \C^3 \times J_{1/2,\gamma}^{m,\sigma}(\Pi) \to X_\sigma^{m-1/2}(\T) \times X_\sigma^{m-1/2}(\T) \to X_\sigma^{m-1/2}(\T)
\end{equation*}
is the composition of analytic maps and is thus analytic.

Observe that $\varphi = \atan \big(p_y + Ra(1,\theta)\sin\theta, p_x + Ra(1,\theta)\cos\theta \big)$ defines a conformal map of $\theta$ in the periodic strip $\T_\sigma$ which is conformal to an annulus. Thus its image is some deformed periodic strip of equal modulus of annulus. By consequence of the above result, $(R,p,a) \to \varphi : \C^3 \times J_{1/2, \gamma}^{m,\sigma}(\Pi) \to X_\sigma^{m-1/2}(\T)$ is continuous at $(R,p,a) = (1,0, \psi^{1/2})$, where we have $\varphi(1,0, \psi^{1/2}) = \atan(\sin\theta, \cos\theta) = \theta$. By continuity of this map and the embedding $X_\sigma^{m-1/2}(\T) \in C(\overline{\T}_\sigma)$, we can make this deformation arbitrary small. In particular, for any $\tau > \sigma$, we can find $\varepsilon$ small enough such that $ \varphi\{\T_\sigma\} \subset \T_\tau$.

\end{proof}

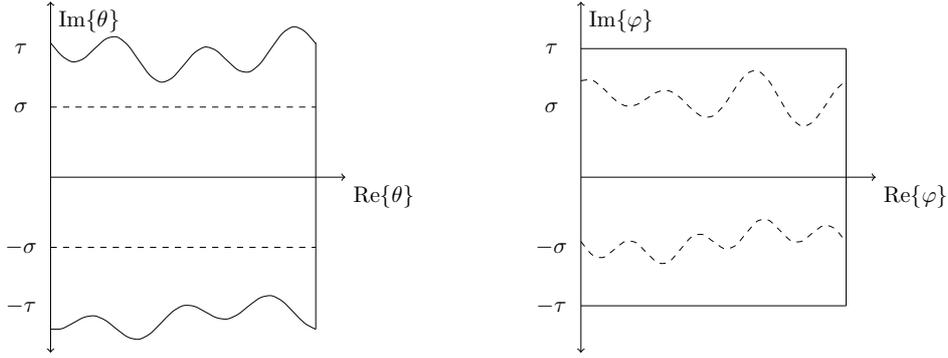
\begin{figure}[H]
\scalebox{0.775}
{
\centering
\begin{tikzpicture}
	\draw[->] (-7,0) -- (-2,0) node[anchor=north west] {$\Re{\theta}$};
	\draw[<->] (-7,-3) -- (-7,3) node[anchor=north west] {$\Im{\theta}$};
	\draw[dashed] (-7,1.2) -- (-2.5,1.2);
	\draw[dashed] (-7,-1.2) -- (-2.5,-1.2);
	\draw (-2.5, -2.6) -- (-2.5, 2.3);%
	\node at (-7.5, 1.2) {$\sigma$};
	\node at (-7.5, -1.2) {$-\sigma$};
	\node at (-7.5, 2.2) {$\tau$};
	\node at (-7.5, -2.2) {$-\tau$};
	\draw[domain=0:4.5, smooth] plot ({-7 + \x}, {2.1+0.2*cos(1.4*(\x r))-0.3*sin(4.2*(\x r))});
	\draw[domain=0:4.5, smooth] plot ({-7+ \x}, {-2.4-0.2*sin(1.4*(\x r))-0.2*cos(4.2*(\x r))});
	\draw[->] (2,0) -- (7,0) node[anchor=north west] {$\Re{\varphi}$};
	\draw[<->] (2,-3) -- (2,3) node[anchor=north west] {$\Im{\varphi}$};
	\draw (2, 2.2) -- (6.5,2.2);
	\draw (2,-2.2) -- (6.5,-2.2);
	\draw (6.5, -2.2) -- (6.5, 2.2);
	\node at (1.5, 2.2) {$\tau$};
	\node at (1.5, -2.2) {$-\tau$};
	\node at (1.5, 1.2) {$\sigma$};
	\node at (1.5, -1.2) {$-\sigma$};
	\draw[domain=0:4.5, smooth, dashed] plot ({2+ \x}, {1.35+0.2*sin(2.8*(\x r))+0.3*cos(4.2*(\x r))});
	\draw[domain=0:4.5, smooth, dashed] plot ({2+ \x}, {-1.1-0.2*sin(1.4*(\x r))-0.2*sin(5.6*(\x r))});
\end{tikzpicture}
}
\caption*{The graph of a polar function can be represented in coordinates $(r,\theta)$ and $(\rho, \varphi)$. The map $\theta \to \varphi$ takes $\T_\sigma$ to some deformed strip (enclosed by the dashed curves on the right side) which is contained in $\T_\tau$. Conversely, $\varphi \to \theta$ takes $\T_\tau$ to some deformed strip (enclosed by the solid curves on the left side) which contains $\T_\sigma$.}
\end{figure}
\begin{remark}
The point is that the nonlinear coordinate change $\theta \leftrightarrow \varphi$ between domains of analyticity is not a self map on $\T_\sigma$. From a reverse perspective, given a prescribed boundary function $\rho = b(\varphi)$ analytic on some domain, the domain of analyticity of $r=Ra(1,\theta)$ will depend on the solution itself (namely the position $p$). Since we require the pool of solutions to be taken from the same Banach space, we must fix the domain of solutions. To work around this, we enlarge the domain of analyticity of prescribed boundary functions $\rho = b(\varphi)$ to $\T_\tau$ with $\tau > \sigma$, so that in a sufficiently small neighbourhood of solution $(R,p,a) = (1,0, \psi^{1/2})$, all solutions map $ \theta \to \varphi : \T_\sigma \to \T_\tau$. That is, we prescribe an analytic boundary function $\rho = b(\varphi)$ whose complex singularities are restricted to $\lvert \Im{\varphi} \rvert \geq \tau$. Then, we describe our solutions on domain $\theta \in \T_\sigma$ with $\rvert \Im{\varphi(\theta)} \lvert < \tau$ so that they do not include the prescribed singularities. For this reason, these prescribed singularities can be of any strength and the boundary functions $\rho=b(\varphi)$ can be taken in any Banach space $\mathrm{H}(\T_\tau)$ of functions holomorphic in $\T_\tau$.
\end{remark}

\begin{theorem}\ \\
Let $m \geq 2$. For any $\tau > \sigma> 0$, there exists a neighbourhood of solution $R=1$, $p=0$ and $a(\psi,\theta) = \psi^{1/2}$ on which 
the boundary map 
\begin{equation*}
(b,R,p,a) \to B : \mathrm{H}(\T_\tau) \times \C^3 \times J_{1/2,\gamma}^{m, \sigma}(\Pi) \to X_\sigma^{m-1/2}(\T)
\end{equation*}
is analytic, for any Banach space $\mathrm{H}(\T_\tau)$ of functions holomorphic in $\T_\tau$.
\end{theorem}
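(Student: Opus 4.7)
My plan is to exploit the decomposition $B(b, R, p, a) = f(R, p, a) - b^{2}\bigl(\varphi(R, p, a)\bigr)$. The previous corollary establishes that $(R,p,a) \to f$ is analytic from $\C^3 \times J_{1/2,\gamma}^{m,\sigma}(\Pi) \to X_\sigma^{m-1/2}(\T)$, and the previous proposition shows that $(R,p,a) \to \varphi$ is analytic with $\varphi\{\T_\sigma\} \subset \T_\tau$ on a sufficiently small neighbourhood of $(1, 0, \psi^{1/2})$, with base value $\varphi_0(\theta) = \theta$. What remains is to show that the composition $(b, \varphi) \to b^{2} \circ \varphi$ is analytic from $\mathrm{H}(\T_\tau) \times V \to X_\sigma^{m-1/2}(\T)$, where $V \subset X_\sigma^{m-1/2}(\T)$ is a small neighbourhood of $\varphi_0$ consisting of functions whose image $\varphi(\overline{\T_\sigma})$ lies in $\T_\tau$.

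Since $X_\sigma^{m-1/2}(\T)$ is a Banach algebra for $m \geq 2$ (analogous to the algebra property established for $J_{0,\gamma}^{m,\sigma}(\Pi)$, but simpler because the argument lives on $\T$ away from any degenerate weight), squaring is a bounded bilinear operation and hence analytic, so it suffices to prove that the composition $(b, \varphi) \to b \circ \varphi$ itself is analytic. The plan is to use a Cauchy integral representation: fix a simple closed contour $\Gamma \subset \T_\tau$ that encircles $\overline{\T_\sigma}$, possible since $\sigma < \tau$. By continuity of the embedding $X_\sigma^{m-1/2}(\T) \hookrightarrow C(\overline{\T_\sigma})$, shrinking $V$ if necessary ensures that for every $\varphi \in V$ the image $\varphi(\overline{\T_\sigma})$ is disjoint from $\Gamma$ and enclosed by it. Then
\begin{equation*}
b(\varphi(\theta)) = \frac{1}{2\pi i} \oint_{\Gamma} \frac{b(w)}{w - \varphi(\theta)}\, dw
\end{equation*}
represents the composition as a contour integral of an $X_\sigma^{m-1/2}(\T)$-valued function of $w$.

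Inside the integrand, the scalar $b(w)$ depends on $b$ via a bounded linear functional on $\mathrm{H}(\T_\tau)$ (point evaluation, which is continuous on any reasonable Banach space of holomorphic functions by a closed-graph argument). The $X_\sigma^{m-1/2}$-valued resolvent $\varphi \mapsto (w - \varphi)^{-1}$ depends analytically on $\varphi$ by Proposition \ref{boundary analytic} applied to the scalar holomorphic function $z \to (w - z)^{-1}$, well-defined on $\C \setminus \{w\}$ and hence on a neighbourhood of $\varphi(\overline{\T_\sigma})$ for all $\varphi \in V$ and $w \in \Gamma$. Joint analyticity of the integrand in $(b, \varphi)$ --- linear in $b$, analytic in $\varphi$, and jointly continuous --- transfers to the integral by a vector-valued Morera argument, provided the integrand is bounded uniformly for $w \in \Gamma$. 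Composing with $(R, p, a) \to \varphi$, squaring, and subtracting from $f$ then yields the joint analyticity of $B$.

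The step I expect to be the main obstacle is establishing the uniform resolvent bound $\sup_{w \in \Gamma}\lVert (w - \varphi)^{-1} \rVert_{X_\sigma^{m-1/2}(\T)} < \infty$ for $\varphi \in V$, which is needed both to justify differentiating under the integral sign and to control the joint norm. I would handle it by a Neumann-series argument around $\varphi_0(\theta) = \theta$: the base resolvent $(w - \theta)^{-1}$ is holomorphic in $\theta \in \T_\sigma$ with norm in $X_\sigma^{m-1/2}(\T)$ bounded uniformly for $w$ on the compact contour $\Gamma$ (since $\mathrm{dist}(\Gamma, \overline{\T_\sigma}) > 0$), and the perturbation $\varphi - \theta$ can be made arbitrarily small in the algebra norm by further shrinking $V$, so the resolvent series $(w - \varphi)^{-1} = \sum_{n \geq 0} (w - \theta)^{-1}\bigl[(\varphi - \theta)(w - \theta)^{-1}\bigr]^{n}$ converges uniformly in $w \in \Gamma$ in the algebra $X_\sigma^{m-1/2}(\T)$.
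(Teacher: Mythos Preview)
Your proof is correct and follows the same overall decomposition $B = f - b^2 \circ \varphi$ as the paper, but you handle the composition map $(b,\varphi)\to b\circ\varphi$ by a genuinely different mechanism. The paper simply invokes Proposition~\ref{boundary analytic} with the outer function taken to be $b$ itself (holomorphic on $\T_\tau \supset \varphi(\overline{\T_\sigma})$) to conclude analyticity in $\varphi$, then observes linearity in $b$, and passes to joint analyticity in the product without further comment---implicitly relying on a Hartogs-type principle (separately holomorphic plus locally bounded implies jointly holomorphic in complex Banach spaces). Your Cauchy-integral representation with the Neumann-series uniform resolvent bound is a more explicit route: it produces the needed local boundedness directly and makes the dependence on $b$ manifestly continuous-linear through point evaluation on the fixed contour $\Gamma$. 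What the paper's approach buys is brevity; what yours buys is that the joint-analyticity step is fully justified rather than tacit, and the uniform estimate $\sup_{w\in\Gamma}\|(w-\varphi)^{-1}\|_{X_\sigma^{m-1/2}}<\infty$ is exactly the missing ingredient the paper's argument would need to be airtight. Both arguments require the implicit hypothesis that point evaluation (or evaluation on compacts in $\T_\tau$) is continuous on $\mathrm{H}(\T_\tau)$, which is the natural reading of ``Banach space of functions holomorphic in $\T_\tau$.''
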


\begin{proof}\ \\
We saw that the composition
\begin{equation*}
(R,p,a) \to (X,Y) \to \varphi
\end{equation*}
\begin{equation*}
\C^3 \times J_{1/2, \gamma}^{m,\sigma}(\Pi) \to X_\sigma^{m-1/2}(\T) \times X_\sigma^{m-1/2}(\T) \to X_\sigma^{m-1/2}(\T)
\end{equation*}
is analytic and the image of $\varphi(\theta)$ is contained in $\T_\tau$. By \ref{boundary analytic}, the map 
\begin{equation*}
(b, \varphi) \to b \circ \varphi : \mathrm{H}(\T_\tau) \times X_\sigma^{m-1/2}(\T) \to X_\sigma^{m-1/2}(\T)
\end{equation*}
is well defined and analytic in $\varphi$. Also it is linear and thus analytic in $b$. Thus it is analytic in the product space $\mathrm{H}(\T_\tau) \times X_\sigma^{m-1/2}(\T)$. Again by \ref{boundary analytic}, the map
\begin{equation*}
b \circ \varphi \to (b \circ \varphi)^2 : X_\sigma^{m-1/2}(\T) \to  X_\sigma^{m-1/2}(\T)
\end{equation*}
is analytic. Finally, we saw also that the map
\begin{equation*}
(R,p,a) \to f : \C^3 \times J_{1/2, \gamma}^{m,\sigma}(\Pi) \to X_\sigma^{m-1/2}(\T)
\end{equation*}
is analytic.Thus, $(b,R,p,a) \to  B = -b^2(\varphi) + f$ is the composition and sum of analytic maps and thus analytic.

\end{proof}


\section{The analytic manifold of stationary flows with an elliptic stagnation point}

The principle driving our work is the representation of a flow as a collection of its flow lines. We have introduced function spaces which describe families of topologically circular flow lines around a single non-degenerate elliptic fixed point. A partial complex analytic structure on these function spaces incorporates the flow line analyticity. In our formulation, stationary flows are governed by a nonlinear degenerate elliptic boundary value problem, which can be expressed as an analytic operator equation in the defined function spaces.

\begin{theorem}[Main Result]\label{Main Result}\ \\
Let $m>3$, $1/2 < \gamma < 1$ and $\tau > \sigma > 0$. There exists a neighbourhood of $F(\psi) =4 $ in $J_{0,\gamma}^{m-2}[0,1]$, $b(\varphi) = 1$ in $\mathrm{H}(\T_\tau)$, $R = 1$ in $\C$, $p=0$ in $\C^2$ and $a(\psi,\theta) = \psi^{1/2}$ in $J_{1/2, \gamma}^{m,\sigma}(\Pi)$, in which \ref{NLBVP} has a unique solution that is parameterized by analytic map 
\begin{equation*}
(F, b) \to (R,p,a) : J_{0,\gamma}^{m-2}[0,1] \times \mathrm{H}(\T_\tau) \to \C^3 \times J_{1/2, \gamma}^{m,\sigma}(\Pi).
\end{equation*}
\end{theorem}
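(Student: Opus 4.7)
The plan is to assemble the two pillars established in the preceding subsections (the isomorphism of the linearization and the analyticity of the nonlinear operator) and feed them directly into the analytic implicit function theorem in complex Banach spaces stated in the excerpt. Concretely, consider the map
\begin{equation*}
\Phi(F,b,R,p,a) = \bigl(\Xi(a) - F(\psi),\; B(b,R,p,a)\bigr)
\end{equation*}
acting between
\begin{equation*}
J_{0,\gamma}^{m-2}[0,1] \times \mathrm{H}(\T_\tau) \times \C^3 \times J_{1/2,\gamma}^{m,\sigma}(\Pi) \longrightarrow \widetilde{J}_{0,\gamma}^{m-2,\sigma}(\Pi) \times X_\sigma^{m-1/2}(\T).
\end{equation*}

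First I would check that the reference point $(F,b,R,p,a) = (4, 1, 1, 0, \psi^{1/2})$ is a zero of $\Phi$. A direct substitution into the formula for $\Xi$ gives $\Xi(\psi^{1/2}) = 4$, matching the constant vorticity of the rigid rotation, so the first component vanishes. Evaluating $B$ at $R=1$, $p=0$, $a(1,\theta)=1$, $b\equiv 1$, the arctangent term reduces to $\arctan(\sin\theta,\cos\theta)=\theta$ with $b^2(\theta)=1$, and the remaining terms collapse to $1$, yielding $B=0$. Hence $\Phi$ vanishes at the reference.

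Next, I would identify the partial derivative $\partial_{(R,p,a)}\Phi$ at the reference with the linearized problem of Definition \ref{true linear problem}. For the first component this is the standard computation of the Gâteaux derivative of the quasilinear operator $\Xi$ at $a=\psi^{1/2}$, which produces the degenerate operator $\psi^{-1/2}[\psi^2\partial_\psi^2 + 2\psi\partial_\psi + \frac{1}{4}(I+\partial_\theta^2)]$ appearing in \ref{LBVP}. For the boundary component, a direct differentiation of $B$ at the reference (noting that $b\equiv 1$ kills the contribution of $b^2\circ\arctan$ through its derivative in the angular argument) produces precisely the linear trace expression $R + \frac{p_x-ip_y}{2}e^{i\theta}+\frac{p_x+ip_y}{2}e^{-i\theta}+u(1,\theta)$. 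By the theorem of the previous subsection, this partial derivative is an isomorphism onto $\widetilde{J}_{0,\gamma}^{m-2,\sigma}(\Pi)\times X_\sigma^{m-1/2}(\T)$ for $1/2<\gamma<1$.

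Analyticity of $\Phi$ in a neighborhood of the reference has already been established: the map $(F,a)\mapsto \Xi(a)-F$ is analytic into $\widetilde{J}_{0,\gamma}^{m-2,\sigma}(\Pi)$ by the differential operator theorem, linearity in $F$ being trivial, and $(b,R,p,a)\mapsto B(b,R,p,a)$ is analytic into $X_\sigma^{m-1/2}(\T)$ by the boundary operator theorem, provided $\tau>\sigma$ and the neighborhood is small enough so that $\varphi\{\T_\sigma\}\subset \T_\tau$. Having verified all hypotheses, the analytic Banach implicit function theorem supplies a unique analytic solution $(F,b)\mapsto(R,p,a)$ in a neighborhood of $(4,1)$, which is the assertion of the theorem. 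No step is genuinely obstructive here, since the delicate analysis (the degenerate elliptic inversion via Hardy's inequality and the superposition estimates in the weighted Kondratev-type spaces) has already been carried out; the only care needed is bookkeeping of neighborhoods so that the analyticity of the boundary map and the invertibility of the linearization hold simultaneously on a common open set.
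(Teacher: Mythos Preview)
Your proposal is correct and follows essentially the same approach as the paper: assemble the analyticity result for $(F,b,R,p,a)\mapsto(\Xi(a)-F,B)$ and the isomorphism of the linearization at the reference, then invoke the analytic implicit function theorem. In fact you supply more explicit verification (that $\Xi(\psi^{1/2})=4$, that $B=0$ at the reference, and that the Gâteaux derivative reproduces the linear problem of Definition~\ref{true linear problem} up to the multiplicative factors the paper drops) than the paper's own terse proof, which simply cites the two pillars and applies the theorem.
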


\begin{proof}
Equation \ref{NLBVP} can be written as an operator equation 
\begin{equation*}
( F, b, R, p, a) \to \big( \Xi(a) - F, B \big) = 0
\end{equation*}
between complex Banach spaces 
\begin{equation*}
J_{0,\gamma}^{m-2}[0,1] \times \mathrm{H}(\T_\tau) \times \C^3 \times J_{1/2, \gamma}^{m,\sigma}(\Pi) \to \widetilde{J}_{0,\gamma}^{m-2,\sigma}(\Pi) \times X_\sigma^{m-1/2}(\T).
\end{equation*}
This equation has a solution at $(F,b,R,p,a) = (4,1,1,0,\psi^{1/2})$ and in a neighbourhood of this solution, the above operator is analytic. The linearization
\begin{equation*}
\frac{\partial \big( \Xi(a) - F, B \big)}{\partial (R, p, a)} : \C^3 \times J_{1/2, \gamma}^{m,\sigma}(\Pi) \to \widetilde{J}_{0,\gamma}^{m-2,\sigma}(\Pi) \times X_\sigma^{m-1/2}(\T)
\end{equation*}
at this solution defines a Banach isomorphism. By the analytic implicit function theorem in complex Banach spaces, the result follows.
\end{proof}

Recall that the unknown $R$ was introduced into the solution as an extra degree of freedom to accommodate the fact that specifying $\psi$ at the fixed point (as we have done) yields an overdetermined problem. Under such circumstances, only the vorticity and the `shape' of domain should be treated as parameters, where as the `radius' of domain depends on vorticity. In our construction, the solutions for which $R \neq 1$ are fictitious in that they are produced by incompatible choices of vorticity and domain. Taking the pre-image of solutions with $R=1$ defines a codimension-one submanifold of the parameter space, consisting of precisely the compatible parameters.

\begin{theorem}\ \\
Under the conditions of \ref{Main Result}, in a neighbourhood of the circular flow of constant vorticity, the set of stationary flows having a single, non-degenerate elliptic fixed point form a complex Banach manifold in $J_{1/2, \gamma}^{m,\sigma}(\Pi)$ parameterized by a codimension-one submanifold of $J_{0,\gamma}^{m-2}[0,1] \times \mathrm{H}(\T_\tau)$.
\end{theorem}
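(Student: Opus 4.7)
The plan is to apply the analytic implicit function theorem once more, this time to the scalar-valued analytic function $R(F,b) - 1$, in order to cut out the codimension-one submanifold of compatible parameters. By Theorem \ref{Main Result}, on some neighbourhood $\mathcal{U}$ of $(4,1)$ in $J_{0,\gamma}^{m-2}[0,1] \times \mathrm{H}(\T_\tau)$ the map
\[
\Phi : (F,b) \mapsto \bigl(R(F,b),\, p(F,b),\, a(F,b)\bigr)
\]
is analytic and parameterises every local solution of \ref{NLBVP}. Genuine stationary flows in the prescribed domain with prescribed vorticity correspond precisely to $R=1$; solutions with $R \neq 1$ are fictitious, arising from incompatible pairs of vorticity and domain, as discussed immediately following \ref{NLBVP}. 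I therefore set
\[
\mathcal{S} = \{(F,b) \in \mathcal{U} : R(F,b) = 1\}
\]
and aim to realise $\mathcal{S}$ as an analytic codimension-one submanifold of $\mathcal{U}$.

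The decisive step is to verify that $DR|_{(4,1)} : J_{0,\gamma}^{m-2}[0,1] \times \mathrm{H}(\T_\tau) \to \C$ is not identically zero, equivalently that it is surjective onto $\C$. I would test this on the single perturbation $\delta F = 0$, $\delta b \equiv c$ a nonzero constant. Linearising the operator of Definition \ref{complex operator equation} at the reference $(4,1,1,0,\psi^{1/2})$, and using $b \equiv 1$, $Db \equiv 0$ and $\varphi = \theta$ there, the $\arctan$ composition inside $B$ contributes only through $\delta b$, since the $Db(\varphi)\,\delta\varphi$ term vanishes. The linearised boundary condition therefore collapses to
\[
\delta R + \delta p_x \cos\theta + \delta p_y \sin\theta + \delta a(1,\theta) = \delta b(\theta),
\]
which is exactly the boundary condition of \ref{true linear problem} with $g(\theta) \equiv c$ and vanishing interior data $f = 0$. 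By Proposition \ref{homogeneous}, the unique solution has $\delta R = \hat g_0 = c$, $\delta p = 0$ and $\delta a = 0$. Hence $DR|_{(4,1)}(0,c) = c \neq 0$, $DR|_{(4,1)}$ is surjective, and the analytic implicit function theorem applied to $R - 1 = 0$ produces $\mathcal{S}$ as an analytic codimension-one submanifold of $\mathcal{U}$ through $(4,1)$.

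To conclude, the analytic map $(F,b) \mapsto a(F,b)$ from Theorem \ref{Main Result}, restricted to $\mathcal{S}$, parameterises the set of local stationary-flow flow-line functions near $\psi^{1/2}$; pushed forward through this restriction, the complex Banach manifold structure on $\mathcal{S}$ realises the image as a complex Banach manifold in $J_{1/2,\gamma}^{m,\sigma}(\Pi)$. The only genuinely delicate point in the programme is the linearisation of the nonlinear boundary operator \ref{boundary} at the reference---specifically, showing that the $\arctan$ composition contributes no additional terms thanks to $Db = 0$---and I expect this to be the main obstacle. Once the clean identity $g = \delta b$ is established there, everything else follows immediately from Theorem \ref{Main Result}, Proposition \ref{homogeneous}, and the analytic implicit function theorem.
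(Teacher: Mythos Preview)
Your approach is correct and matches the paper's intent: the paper states this theorem without a formal proof, offering only the preceding sentence ``Taking the pre-image of solutions with $R=1$ defines a codimension-one submanifold of the parameter space'' as justification. You have filled in the step the paper leaves implicit---namely, verifying that $DR|_{(4,1)}$ is surjective by testing against a constant perturbation $\delta b \equiv c$ and invoking Proposition~\ref{homogeneous}---and your linearisation of the boundary operator is accurate (the $Db$ terms indeed vanish at the reference since $b \equiv 1$).
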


Comparing to our previous work \cite{Da} in which we obtained an analytic parameterization of stationary flows in a periodic strip without fixed point, our result here is a touch weaker. The parameterization of the prior work includes in its description the prescribed singularities of the boundary flow lines (which may occur along $\partial \T_\sigma$). In the parameterization provided here, the prescribed singularities are explicitly avoided from the description. The limitation seems only of a technical nature resulting from the coordinate changes induced by translations of the fixed point of the flow. In the case of solutions for which the fixed point does not deviate from the origin, this coordinate change does not occur. We then expect the following strengthening of our main result:

\begin{theorem}\ \\
Suppose $(b,F) \in X_{\tau}^{m-1/2}(\T) \times J_{0, \gamma}^{m-2}[0,1]$ are such that solution $a(\psi,\theta) \in J_{1/2, \gamma}^{m,\sigma}(\Pi)$ has fixed point at $p=0$. Then in fact  $a(\psi,\theta) \in J_{1/2, \gamma}^{m,\tau}(\Pi)$.
\end{theorem}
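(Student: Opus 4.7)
The plan exploits the observation that the only step in the proof of the main theorem requiring $\tau > \sigma$ is the nonlinear coordinate change $\theta \mapsto \varphi(\theta, p, R, a)$ in the boundary operator. When $p = 0$ this map degenerates to the identity, and the strip-shrinkage argument of the boundary-operator subsection of Section 3.2 vanishes entirely. My strategy is to rerun the implicit function theorem with $p$ fixed at $0$ and with analyticity parameter $\tau$ everywhere, then use uniqueness to identify the resulting higher-regularity solution with the given $a$.

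First, I would define the restricted nonlinear operator
\begin{equation*}
\Phi \colon (F, b, R, a) \longmapsto \bigl(\Xi(a) - F,\; R^{2} a^{2}(1,\theta) - b^{2}(\theta)\bigr)
\end{equation*}
between
\begin{equation*}
J_{0,\gamma}^{m-2}[0,1] \times X_{\tau}^{m-1/2}(\T) \times \C \times J_{1/2,\gamma}^{m,\tau}(\Pi) \longrightarrow \widetilde{J}_{0,\gamma}^{m-2,\tau}(\Pi) \times X_{\tau}^{m-1/2}(\T).
\end{equation*}
With the $\arctan$-type term absent, the analyticity arguments of Section 3.2 apply \emph{verbatim} with $\sigma$ replaced by $\tau$: the differential part is analytic by the superposition theorem in $J_{0,\gamma}^{m,\tau}(\Pi)$, and the new boundary part is polynomial in $R$, $a(1,\theta)$ and $b$, hence analytic in spaces which are algebras. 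To compensate for the two complex unknowns eliminated by fixing $p=0$, I would invoke the main theorem to produce the analytic map $(F,b) \mapsto p(F,b) \in \C^{2}$, whose derivative at $(4, 1)$ is surjective by the explicit formulas for $p_x,p_y$ in the linear analysis of Section 3.1. Its zero set
\begin{equation*}
\mathcal{M} := \{(F, b) : p(F, b) = 0\}
\end{equation*}
is therefore an analytic codimension-two submanifold through $(4, 1)$. Restricted to $\mathcal{M}$, the linearization of $\Phi$ in $(R, a)$ is exactly the $p=0$-restriction of the isomorphism from Section 3.1, and cutting the target by the two $|k|=1$ Fourier-mode conditions that $p$ previously absorbed yields an honest Banach isomorphism.

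Applying the analytic implicit function theorem to $\Phi|_{\mathcal{M}}$ produces a unique analytic solution $(R, a) \in \C \times J_{1/2,\gamma}^{m,\tau}(\Pi)$ in a neighbourhood of the reference. Because $J_{1/2,\gamma}^{m,\tau}(\Pi) \hookrightarrow J_{1/2,\gamma}^{m,\sigma}(\Pi)$ and the given $a$ also satisfies the problem with $p=0$, uniqueness in the narrower class forces the two solutions to coincide, giving $a \in J_{1/2,\gamma}^{m,\tau}(\Pi)$. The principal obstacle is matching the codimension-two restrictions on source and target so that the restricted linearization is a genuine isomorphism: one must identify the two linear functionals cutting out the image of the $p=0$ linearization and verify that, under the analytic perturbation, they correspond to the defining equations of $\mathcal{M}$. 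This amounts to careful $|k|=1$ Fourier-mode bookkeeping in the linear analysis of Section 3.1; beyond it, no new analytical input is required.
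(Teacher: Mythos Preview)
The paper does not actually prove this statement: it is presented as an expected strengthening of the main result, followed by the sentence ``It remains to be seen how to show this improvement.'' So there is no proof in the paper to compare against, and your proposal is an attempt at an open point.

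Your diagnosis of \emph{where} the loss $\tau > \sigma$ occurs is exactly right: it is only in the composition $b\circ\varphi$ in the boundary operator, and when $p=0$ the map $\varphi$ is the identity. Your overall strategy --- rerun the implicit function theorem at analyticity level $\tau$ and then identify the two solutions by uniqueness at level $\sigma$ --- is also the natural one.

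The gap is in how you propose to apply the implicit function theorem to the reduced operator $\Phi$. Removing $p$ from the unknowns leaves the linearization $\C\times J_{1/2,\gamma}^{m,\tau}(\Pi)\to\widetilde{J}_{0,\gamma}^{m-2,\tau}(\Pi)\times X_\tau^{m-1/2}(\T)$ with a two-dimensional cokernel (the $\lvert k\rvert=1$ boundary modes). Restricting the \emph{parameters} to the codimension-two submanifold $\mathcal{M}$ does nothing to this cokernel: the partial derivative in $(R,a)$ is computed at a fixed point and is the same regardless of how you constrain $(F,b)$. And you cannot simply cut the target by the two $\lvert k\rvert=1$ functionals, because the nonlinear map $(R,a)\mapsto R^2a^2(1,\theta)-b^2(\theta)$ does not land in that subspace for general $(R,a)$ --- only at the actual solution. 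What you are implicitly invoking is a Lyapunov--Schmidt reduction (project onto the range, solve there by IFT, then show the bifurcation equation in the cokernel is equivalent to $(F,b)\in\mathcal{M}$), which is correct but is more than ``Fourier-mode bookkeeping.''

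A cleaner route that avoids this entirely: keep $p$ as an unknown but replace $B$ by
\[
B'(b,R,p,a)=R^2a^2(1,\theta)+2Ra(1,\theta)(p_x\cos\theta+p_y\sin\theta)+p_x^2+p_y^2-b^2(\theta),
\]
i.e.\ the original $B$ with $\varphi(\theta)$ forcibly replaced by $\theta$. This is polynomial in all arguments, hence analytic at level $\tau$ with $b\in X_\tau^{m-1/2}(\T)$; and since $B=B'$ identically when $p=0$, the linearization at the reference is unchanged and is the full isomorphism $\C^3\times J_{1/2,\gamma}^{m,\tau}(\Pi)\to\widetilde{J}_{0,\gamma}^{m-2,\tau}(\Pi)\times X_\tau^{m-1/2}(\T)$ from Section~3.1. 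The implicit function theorem at level $\tau$ then gives a solution $(R',p',a')$ of the modified problem, and your uniqueness argument at level $\sigma$ identifies it with the given $(R,0,a)$, since the latter also solves the modified problem when $p=0$.
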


It remains to be seen how to show this improvement. Doing so would bring our result in exact analogy with the prior work on the periodic strip.

}
\bibliography{Bibliography}

\providecommand{\bysame}{\leavevmode\hbox to3em{\hrulefill}\thinspace}
\providecommand{\MR}{\relax\ifhmode\unskip\space\fi MR }
\providecommand{\MRhref}[2]{%
  \href{http://www.ams.org/mathscinet-getitem?mr=#1}{#2}
}
\providecommand{\href}[2]{#2}
\begin{thebibliography}{10}

\bibitem{AmPr}
A.~Ambrosetti and G.~Prodi, \emph{A primer of nonlinear analysis}, Cambridge
  Studies in Advanced Mathematics, vol.~34, Cambridge University Press, 1995.

\bibitem{BoKo}
M.~Borsuk and V.~Kondratev, \emph{Elliptic boundary value problems of second
  order in piecewise smooth domains}, 1 ed., North-Holland Mathematical
  Library, vol.~69, Elsevier, 2006.

\bibitem{ChSv}
A.~Choffrut and V.~{\v S}ver{\'a}k, \emph{Local structure of the set of
  steady-state solutions to the 2d incompressible euler equations}, Geometric
  Functional Analysis \textbf{22} (2012), 136--201.

\bibitem{Co}
P.~Constantin, V.~Vicol, and J.~Wu, \emph{Analyticity of lagrangian
  trajectories for well posed inviscid incompressible fluid models}, Advances
  in Mathematics \textbf{285} (2015), 352--393.

\bibitem{Da}
A.~Danielski, \emph{Analytical structure of stationary flows of an ideal
  incompressible fluid}, Master's thesis, Concordia University, 2017.

\bibitem{Du}
M-L. Dubreil-Jacotain, \emph{Sur la d{\'e}termination rigoureuse des ondes
  permanentes p{\'e}riodiques d'ampleur finie}, Journal de Math{\'e}matiques
  Pures et Appliqu{\'e}es \textbf{13} (1934), 217--291.

\bibitem{In}
H.~Inci, \emph{On a lagrangian formulation of the euler equation}, Journal of
  Partial Differential Equations \textbf{29} (2015), no.~4, 320--359.

\bibitem{InKaTo}
H.~Inci, T.~Kappeler, and P.~Topalov, \emph{On the regularity of the
  composition of diffeomorphisms}, Memoirs of the American Mathematical Society
  \textbf{226} (2013), no.~1062.

\bibitem{Ko}
V.~A. Kondrat'ev, \emph{Boundary value problems for elliptic equations in
  domains with conical or angular points}, Trudy Moskovskogo Matematicheskogo
  Obshchestva \textbf{16} (1967), 209--292.

\bibitem{Mu}
J.~Mujica, \emph{Complex analysis in banach spaces: Holomorphic functions and
  domains of holomorphy in finite and infinite dimensions}, North-Holland
  Mathematics Studies, vol. 120, North-Holland, 1986.

\bibitem{Na}
N.~Nadirashvili, \emph{On stationary solutions of two-dimensional euler
  equation}, Archive for Rational Mechanics and Analysis \textbf{209} (2013),
  729--745.

\bibitem{Se}
P.~Serfati, \emph{Structures holomorphes {\`a} faible r{\'e}gularit{\'e}
  spatiale en m{\'e}canique des fluides}, Journal de Math{\'e}matiques Pures et
  Appliqu{\'e}s \textbf{74} (1995), 95--104.

\bibitem{Sh1}
A.~Shnirelman, \emph{On the analyticity of particle trajectories in the ideal
  incompressible fluid}, Global and Stochastic Analysis \textbf{2} (2015),
  no.~2.

\bibitem{Mi}
R.~von Mises, \emph{Bemerkungen zur hydrodynamik}, Zeitschrift f{\"u}r
  Angewandte Mathematik und Mechanik \textbf{7} (1927), no.~6, 425--431.

\bibitem{Wo}
W.~Wolibner, \emph{Un theor{\`e}me sur l'existence du mouvement plan d'un
  fluide parfait, homog{\`e}ne, incompressible, pendant un temps infiniment
  long}, Mathematische Zeitschrift \textbf{37} (1933), 698--726.

\bibitem{FrZh}
V.~Zheligovsky and U.~Frisch, \emph{Time-analyticity of lagrangian particle
  trajectories in ideal fluid flow}, Journal of Fluid Mechanic \textbf{749}
  (2014), 404--430.

\end{thebibliography}
\bibliographystyle{amsplain}

\end{document}